\newcommand{\R}{\ensuremath{\mathbb{R}}}
\newcommand{\Z}{\ensuremath{\mathbb{Z}}}
\newcommand{\ep}{\ensuremath{\epsilon}}
\newcommand{\va}{\ensuremath{\varepsilon}}
\newcommand{\ze}{\ensuremath{\zeta}}
\newcommand{\HH}{\ensuremath{\widetilde{H}}}
\newcommand{\DD}{\ensuremath{\mathbf{D}}}
\newtheorem{theorem}{Theorem}[section]
\newtheorem{lemma}[theorem]{Lemma}
\newtheorem{proposition}[theorem]{Proposition}
\newtheorem{corollary}[theorem]{Corollary}
\numberwithin{equation}{section}
\begin{document}

\title{On the stability of certain spin models in $2+1$ dimensions}
\author{I. Bejenaru}
\address{University of Chicago}
\email{bejenaru@math.uchicago.edu}
\author{A. D. Ionescu}
\address{University of Wisconsin -- Madison}
\email{ionescu@math.wisc.edu}
\author{C. E. Kenig}
\address{University of Chicago}
\email{cek@math.uchicago.edu}

\thanks{I. B. was supported in part by NSF grant DMS0738442.
 A. I. was supported in part by a Packard Fellowship. C. K. 
 was supported in part by  NSF grant DMS0456583.}
\begin{abstract}{In this paper we prove large-data local stability theorems for several spin-field models in two dimensions, both in the focusing case (spherical target) and the defocusing case (hyperbolic target).}
\end{abstract}

\maketitle
\tableofcontents

\section{Introduction}\label{intro}

In this paper we consider several 2-dimensional spin models. One of these models is the hyperbolic-elliptic Ishimori system
\begin{equation*}
\partial_t s= s \times (\partial_x^2s-\partial_y^2s)+\partial_xs\cdot\partial_y\ze+\partial_ys\cdot\partial_x\ze,\qquad\Delta\ze= 2s\cdot(\partial_xs\times \partial_ys),
\end{equation*}
where the spin $s$ is defined in an open set of $\mathbb{R}^2\times\mathbb{R}$ and takes values into the $2$-dimensional sphere $\mathbb{S}^2$. The Ishimori system, proposed by Ishimori \cite{Is}, is an integrable topological spin field model. The local and the global regularity properties of the Cauchy problem associated to the Ishimori system have been studied extensively, see for example \cite{HaSa}, \cite{KeNa}, \cite{So} and \cite{Su4}.

We consider in this paper both focusing and defocusing spin systems. To analyze them in a unified, geometric framework we define, for $\mu=\pm 1$, the connected Riemannian manifolds $S_\mu$,
\begin{equation}\label{defi1}
\begin{split}
&S_1=\mathbb{S}^2=\{y=(y_0,y_1,y_2)\in\mathbb{R}^3:y_0^2+y_1^2+y_2^2=1\};\\
&S_{-1}=\mathbb{H}^2=\{y=(y_0,y_1,y_2)\in\mathbb{R}^3:y_0^2-y_1^2-y_2^2=1,\,y_0>0\},
\end{split}
\end{equation}
with the Riemannian structures induced by the Euclidean metric $\mathbf{g}_1=dy_0^2+dy_1^2+dy_2^2$ on $S_1$, respectively the Minkowski metric $\mathbf{g}_{-1}=-dy_0^2+dy_1^2+dy_2^2$ on $S_{-1}$. Thus $S_1$ is the 2-dimensional sphere $\mathbb{S}^2$, while $S_{-1}$ is the 2-dimensional hyperbolic space $\mathbb{H}^2$. Given $\mu=\pm 1$ and two vectors $v={}^t(v_0,v_1,v_2)$ and $w={}^t(w_0,w_1,w_2)$ in $\mathbb{R}^3$, we define their inner product
\begin{equation}\label{defi2}
v\cdot_\mu w=\mathbf{g}_\mu(v,w)={}^tv\cdot\eta_\mu\cdot w=\mu v_0w_0+v_1w_1+v_2w_2,
\end{equation}
where $\eta_\mu=\mathrm{diag}(\mu,1,1)$. We define also the cross product
\begin{equation}\label{defi3}
v\times_\mu w:=\eta_\mu\cdot(v\times w), 
\end{equation}
where $v\times w$ denotes the usual vector product of vectors in $\mathbb{R}^3$. Simple computations show that, for $\mu=\pm 1$ and $v,w\in\mathbb{R}^3$
\begin{equation}\label{products}
\begin{split}
&v\cdot_\mu(v\times_\mu w)=w\cdot_\mu(v\times_\mu w)=0,\\
&(v\times_\mu w)\cdot_\mu(v\times_\mu w)=\mu(v\cdot_\mu v)(w\cdot_\mu w)-\mu(v\cdot_\mu w)^2.
\end{split}
\end{equation}

In this paper we consider the spin-field models on $\R^2\times I$
\begin{equation} \label{Isi}
\partial_t s= s \times_\mu (s_{11}+\ep s_{22})+s_1\ze_2-\ep s_2\ze_1,\qquad\Delta\ze= 2\mu s\cdot_\mu(s_1 \times_\mu s_2),
\end{equation}
where $\ep,\mu\in\{-1,1\}$ and $I\subseteq\R$ is an open interval. The functions $s:\R^2\times I\to S_\mu$ and $\ze:\R^2\times I\to\mathbb{R}$ in \eqref{Isi} are assumed to be sufficiently smooth functions, and $s_1=\partial_xs$, $s_2=\partial_ys$, $s_{11}=\partial_x^2s$, $s_{22}=\partial_y^2s$, $\ze_1=\partial_x\ze$, $\ze_2=\partial_y\ze$, and $\Delta\ze=(\partial_x^2+\partial_y^2)\ze$. 

Spin-field models of this type have been studied in the literature. The pair $(\ep,\mu)=(-1,1)$ corresponds to the hyperbolic-elliptic Ishimori system introduced in \cite{Is}. The defocusing case $\mu=-1$, when the target is the hyperbolic plane $\mathbb{H}^2$, has been introduced and studied in \cite{MaPrSoSo}. The pair $(\ep,\mu)=(1,1)$ corresponds to the incompressible spin fluid system, see \cite{MaPrSoSo}. In this case the spin model \eqref{Isi} becomes
\begin{equation}\label{spiflu}
\partial_t s= s \times \Delta s+s_1\ze_2-s_2\ze_1,\qquad\Delta\ze= 2s\cdot(s_1 \times s_2),
\end{equation}
which is a correction of the classical Heisenberg model (Schr\"{o}dinger map equation)
\begin{equation}\label{Heisen}
\partial_t s= s \times \Delta s.
\end{equation}
This correction was proposed by Volovik \cite{Vo} on physical grounds, for restoration of the correct linear momentum density of the ferromagnets. We emphasize that the mathematical analysis of the Cauchy problem associated to the corrected system \eqref{spiflu} is much simpler than the analysis of the Cauchy problem associated to the Heisenberg model \eqref{Heisen}. The algebraic effect of the correction $s_1\ze_2-s_2\ze_1$ in the right-hand side of \eqref{spiflu} is to cancel the magnetic components of the nonlinearities of the corresponding modified spin system, see section \ref{gauge} for details, which significantly simplifies the analysis of these nonlinearities. This algebraic cancellation is a key feature of all the systems we consider in this paper.
 
We consider ``classical'' solutions of the spin-field models \eqref{Isi}. For $\sigma\geq 1$ we define the spaces of functions
\begin{equation}\label{ho2}
\widetilde{H}^\sigma=\widetilde{H}^\sigma_\mu=\{f\in C^1_b(\R^2:S_\mu):\partial_1f,\partial_2f\in H^{\sigma-1}\},
\end{equation}
where $C^1_b(\R^2:S_\mu)$ denotes the space of bounded $C^1$ functions $f:\R^2\to S_\mu$. For $f,g\in\widetilde{H}^\sigma$ we define
\begin{equation}\label{ho3}
d_\sigma(f,g)=\|f-g\|_{L^\infty}+\sum_{m=1}^2\|\partial_m(f-g)\|_{H^{\sigma-1}},
\end{equation}
and observe that $(\widetilde{H}^\sigma,d_\sigma)$ is a metric space.

We fix, say, $\sigma_0=10$, and consider solutions $s\in C(I:\widetilde{H}^{\sigma_0})$ of \eqref{Isi}, where $I\subseteq\R$ is an open interval. Given such a solution $s$, the function $\ze$ in \eqref{Isi} can be defined as follows: we use the equation $\Delta\ze=2\mu s\cdot_\mu(s_1\times_\mu s_2)\in C(I:L^1\cap L^\infty)$ to define 
\begin{equation}\label{sense}
\ze_1=-R_1\nabla^{-1}[2\mu s\cdot_\mu(s_1\times_\mu s_2)],\qquad\ze_2=-R_2\nabla^{-1}[2\mu s\cdot_\mu(s_1\times_\mu s_2)],
\end{equation}
where the operators $\nabla^{-1}$, $R_1$, and $R_2$ are defined by the Fourier multipliers $|\xi|^{-1}$, $i\xi_1/|\xi|$, and $i\xi_2/|\xi|$ respectively. The functions $\ze_1$ and $\ze_2$ are continuous functions on $\R^2\times I$ and $\ze_1,\ze_2\in C(I:L^p(\R^2))$ for any $p>2$. The function $\ze$ is defined as the unique $C^1$ function on $\R^2\times I$ satisfying
\begin{equation}\label{sense2}
\partial_1\ze=\ze_1,\qquad\partial_2\ze=\ze_2,\qquad\ze(0,0,t)=0.
\end{equation}
Thus $\ze\in C^1(\R^2\times I:\mathbb{R})$ is determined uniquely by $s$ using \eqref{sense} and \eqref{sense2}. In other words, at least for classical solutions $s\in C(I:\widetilde{H}^{\sigma_0})$ the spin model \eqref{Isi} is equivalent to the evolution equation
\begin{equation*}
\partial_t s= s \times_\mu (s_{11}+\ep s_{22})+s_1\ze_2-\ep s_2\ze_1,\quad\ze_m=-R_m\nabla^{-1}[2 \mu s\cdot_\mu(s_1 \times_\mu s_2)].
\end{equation*}

For $q=0,1,2,3$ let
\begin{equation*}
\begin{split}
C^q(I:\widetilde{H}^{\sigma_0})=\{s\in &C(I:\widetilde{H}^{\sigma_0})\cap C^q(\R^2\times I:S_\mu):\\
&\partial^{q'}_ts\in C(I:H^{\sigma_0-2q'}),q'=0,\ldots,q\}.
\end{split}
\end{equation*}
Let 
\begin{equation*}
\mathcal{P}=\{(I,g):I\subseteq\R\text{ is an open interval and }g\in C^3(I:\HH^{\sigma_0})\}
\end{equation*}
with the natural partial order
\begin{equation*}
(I,g)\leq (I',g')\quad\text{ if }I\subseteq I'\text{ and }g'(t)=g(t)\text{ for any }t\in I.
\end{equation*}
Our first theorem is a large-data local regularity result.

\begin{theorem}\label{thm1}
(a) Assume $\sigma_0=10$, $\mu,\ep\in\{-1,1\}$, and $f\in\widetilde{H}^{\sigma_0}$. Then there is a  unique maximal solution $(I(f),s)$ of the initial-value problem
\begin{equation}\label{in1}
\begin{cases}
&\partial_t s= s \times_\mu (s_{11}+\ep s_{22})+s_1\ze_2-\ep s_2\ze_1,\quad\ze_m=-R_m\nabla^{-1}[2 \mu s\cdot_\mu(s_1 \times_\mu s_2)],\\
&s(0)=f,
\end{cases}
\end{equation}
where $I(f)\subseteq \R$ is an open interval and $s\in C^3(I(f):\widetilde{H}^{\sigma_0})$.

(b) Let $I_+(f)=I(f)\cap[0,\infty)$ and $I_-(f)=I(f)\cap (-\infty,0]$. Then
\begin{equation*}
\begin{split}
&\text{ if }I_+(f)\text{ bounded }\text{ then }\| |Ds|\|_{L^4_{x,t}(\R^2\times I_+(f))}=\infty,\\
&\text{ if }I_-(f)\text{ bounded }\text{ then }\| |Ds|\|_{L^4_{x,t}(\R^2\times I_-(f))}=\infty,
\end{split}
\end{equation*}
where
\begin{equation*}
|Ds|:=\big[\sum_{m=1}^2\partial_ms\cdot_\mu\partial_ms\big]^{1/2}.
\end{equation*}
\end{theorem}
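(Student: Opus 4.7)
The plan is to reduce \eqref{in1} to a complex Schr\"odinger-type system for the derivatives $\partial_m s$ in a suitably chosen moving frame, establish local existence and uniqueness for that reduced system by standard Strichartz/energy methods, and then deduce the blow-up criterion from the critical scaling exhibited by the Strichartz pair $(4,4)$.

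The starting point is a gauge reformulation of the ``modified spin system'' referenced by the authors. Given a classical solution $s\in C^3(I:\widetilde{H}^{\sigma_0})$, I would choose a smooth orthonormal frame $\{e_1,e_2\}$ of the pull-back bundle $s^{-1}TS_\mu$ and encode the spatial derivatives by the complex-valued fields $\psi_m=\mathbf{g}_\mu(\partial_m s,e_1)+i\mathbf{g}_\mu(\partial_m s,e_2)$ for $m=1,2$, together with connection coefficients $A_m=\mathbf{g}_\mu(\partial_m e_1,e_2)$ and a temporal analog $A_t$. A direct computation, crucially using the algebraic cancellation provided by the correction terms $s_1\ze_2-\ep s_2\ze_1$ (which kill the magnetic, derivative-losing contributions), shows that in the Coulomb gauge $\partial_1 A_1+\partial_2 A_2=0$ each $\psi_m$ satisfies a Schr\"odinger-type equation
\begin{equation*}
i\partial_t\psi_m-(\partial_1^2+\ep\partial_2^2)\psi_m=\mathcal{N}_m(\psi,A,\ze),
\end{equation*}
with $\mathcal{N}_m$ a multilinear, derivative-free expression. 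Strichartz and local smoothing estimates are available for the symbol $\xi_1^2+\ep\xi_2^2$ in both the elliptic case $\ep=1$ and the ultrahyperbolic case $\ep=-1$.

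For part (a), I would then construct local solutions by a Bona--Smith-type approximation: mollify $\partial_m f$ to obtain smooth data $f_n$, solve the gauge-reduced system for $\psi^{(n)}$ by Picard iteration in a Strichartz-type space, and reconstruct $s_n$ by integrating the frame ODE from the pointwise value $f_n(0,0)$. Uniform $H^{\sigma_0-1}$ bounds on $\partial_m s_n$ over some interval $[-T,T]$ with $T=T(d_{\sigma_0}(f,\mathrm{const}))$ follow from energy estimates combined with the Strichartz control on $\psi^{(n)}$; compactness then furnishes a limit $s\in C(I:\widetilde{H}^{\sigma_0})$, and the equation itself upgrades the regularity to $C^3$. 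Uniqueness follows from a low-regularity difference estimate (for instance in $L^2$ for $\psi-\psi'$ and $L^\infty$ for $s-s'$) closed by Gronwall, and maximality of $(I(f),s)$ in the poset $\mathcal{P}$ is a Zorn-type argument.

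For part (b), suppose $I_+(f)=[0,T^*)$ is bounded and that $\| |Ds|\|_{L^4_{x,t}(\R^2\times[0,T^*))}<\infty$. Since $(4,4)$ is Strichartz-admissible for the symbol $\xi_1^2+\ep\xi_2^2$ in two space dimensions, I would bootstrap as follows: the $L^4_{x,t}$ bound on $|Ds|$ essentially controls $\|\psi\|_{L^4_{x,t}}$ and hence the full Strichartz norm of $\psi_m$ on $[0,T^*)$; elliptic estimates in the Coulomb gauge recover $A$ from $\psi$; and these together propagate the $H^{\sigma_0-1}$ norm of $\partial_m s$ uniformly up to $T^*$. Thus $s(T^*)\in\widetilde{H}^{\sigma_0}$ exists as a limit, and applying part (a) at this new initial time extends the solution past $T^*$, contradicting maximality; the $I_-(f)$ case is identical after time reversal. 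The main obstacle throughout is implementing the algebraic cancellation in the gauge reduction with enough structure to close the Strichartz estimates for the reduced Schr\"odinger system---particularly in matching the ultrahyperbolic symbol when $\ep=-1$ to a suitable controlling norm---after which the remaining analysis is essentially classical.
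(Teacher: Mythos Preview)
Your proposal follows the same strategy as the paper: pass to a Coulomb-gauge Schr\"odinger system for the fields $\psi_m$, exploit the cancellation of the magnetic terms to obtain a derivative-free nonlinearity, solve by Strichartz methods, reconstruct $s$ via the frame ODE, and deduce the blow-up criterion by bootstrapping the $L^4_{x,t}$ norm. The paper's implementation is more direct than yours in two places. First, no Bona--Smith approximation or compactness is used: since the nonlinearity is derivative-free and $\sigma_0=10$, the modified system is solved directly by a contraction in the Littlewood--Paley Strichartz space $X^{\sigma_0-1}$, and no local smoothing is needed. Second, the reconstruction of $s$ from $\psi$ is done by integrating the \emph{time} ODE for $(s,v,w)$ with coefficients $(\psi_0,A_0)$ and then verifying that the spatial identities $\partial_m s=v\Re\psi_m+w\Im\psi_m$ are propagated in time; this step requires the full covariant compatibility $\DD_l\psi_m=\DD_m\psi_l$ and $\partial_lA_m-\partial_mA_l=\mu\Im(\psi_l\overline{\psi_m})$ for all $l,m\in\{0,1,2\}$, which the paper proves separately (Proposition~\ref{regular}(c)) and which your outline does not address. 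Your approximation scheme would ultimately work but is unnecessary overhead here.
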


In other words, we prove that any classical data admits a unique maximal classical extension as the solution of the spin system \eqref{in1}. This solution extends as long as the critical space-time scattering norm $\||Ds|\|_{L^4}$ stays bounded, where $|Ds|$ is the covariant gradient\footnote{The norm $|Ds|$ is well defined since $v\cdot_\mu v\geq 0$ for any vector $v$ tangent to $S_\mu$ at some $p\in S_\mu$.} of $s$ as defined above. This is similar to well-known theorems on scalar equations, such as the $2$-dimensional $L^2$-critical NLS
\begin{equation*}
(i\partial_t+\Delta_x)\phi=\pm|\phi|^2\phi,\qquad \phi(0)=\phi_0\in L^2.
\end{equation*}

We prove also a stability result. For this we need semidistance functions\footnote{A function $\rho:X\times X\to[0,\infty)$ is a semidistance function on $X$ if $\rho(f,g)=\rho(g,f)$ and $\rho(f,h)\leq\rho(f,g)+\rho(g,h)$ for any $f,g,h\in X$.} $\dot{d}^{1}:\widetilde{H}^{\sigma_0}\times\widetilde{H}^{\sigma_0}\to [0,\infty)$ and $\dot{\rho}_I^1:C^3(I:\HH^{\sigma_0})\times C^3(I:\HH^{\sigma_0})\to[0,\infty)$, defined for any open interval $I\subseteq\R$, which satisfy
\begin{equation}\label{dist1}
\begin{split}
&\dot{d}^1(g,c_Q)=\| |Dg|\|_{L^2},\qquad \dot{\rho}_I^1(g,c_Q)=\| |Dg|\|_{L^\infty_tL^2_x(\R^2\times I)}+\||Dg|\|_{L^4_{x,t}(\R^2\times I)},\\
&\sup_{t\in I}\dot{d}^1(g(t),g'(t))\leq \dot{\rho}_I^1(g,g')\qquad\text{ for any }g,g'\in C^3(I:\HH^{\sigma_0}),
\end{split}
\end{equation}
where, for any $Q\in S_\mu$, $c_Q$ denotes the constant function $c_Q(.)=Q$. We define these semidistance functions precisely in section \ref{proofthm2}, and prove some of their properties in Proposition \ref{distprop}. Intuitively, one could think of $\dot{d}^1(f,f')$ and $\dot{\rho}^1(f,f')$ as nonlinear ways to measure the "distance" between the functions $f,f'$, at a critical level (compare with \eqref{dist1}), in our geometric setting in which the usual "difference" $f'-f$ is not geometrically relevant. Semidistance functions of this type have been used in recent work of Tao \cite{Ta4} on global regularity of wave maps.

Our stability result is the following:  

\begin{theorem}\label{thm2}
Assume that $f\in\widetilde{H}^{\sigma_0}$ and define the maximal solution $(I(f),s)$ as in Theorem \ref{thm1}. Assume that $J\subseteq I(f)$, $0\in J$, is an open interval such that
\begin{equation*}
N(f,J):=\||Df|\|_{L^2(\R^2)}+\||Ds|\|_{L^4(\R^2\times J)}<\infty.
\end{equation*}
Then there is $\overline{\delta}=\overline{\delta}(N(f,J))>0$ with the following  property: if 
\begin{equation*}
f'\in \widetilde{H}^{\sigma_0}\quad\text{ and }\quad \dot{d}^1(f,f')\leq\overline{\delta}
\end{equation*}
then 
\begin{equation*}
J\subseteq I(f'),\quad\text{ and }\quad\dot{\rho}^1_J(s,s')\lesssim_{N(f,J)} \dot{d}^{1}(f,f').
\end{equation*}
\end{theorem}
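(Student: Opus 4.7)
My plan is to reduce the theorem to a small-data perturbation lemma on a short time interval and then propagate closeness along a partition of $J$ via a continuity argument. I would first fix a small threshold $\eta > 0$ to be chosen later depending on absolute constants; since $\||Ds|\|_{L^4(\mathbb{R}^2\times J)} \leq N(f,J)$, one can partition $J$ into $K \lesssim_{N(f,J)} 1$ consecutive open subintervals $J_k = (t_k, t_{k+1})$ with $\||Ds|\|_{L^4(\mathbb{R}^2\times J_k)} \leq \eta$ on each. The iteration then propagates quantitative smallness of $\dot{d}^1(s(t_k), s'(t_k))$ and of $\dot{\rho}^1_{J_k}(s,s')$ from one subinterval to the next.

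The central analytic ingredient is the following \textbf{small-data perturbation lemma}: if $s,s'$ are two solutions on an open interval $I \ni t_0$ with $\||Ds|\|_{L^4(\mathbb{R}^2\times I)} \leq \eta$ and $\dot{d}^1(s(t_0), s'(t_0)) \leq \delta$ for some $\delta \ll \eta$, then $s'$ persists on $I$ and $\dot{\rho}^1_I(s, s') \lesssim \delta$. I would prove this using the gauge machinery described in Section \ref{gauge}: in a Coulomb-type gauge the gauge-transformed derivative field $\psi$ of each solution satisfies a modified Schr\"odinger equation whose nonlinearity, thanks to the algebraic cancellation produced by the correction $s_1\ze_2 - \ep s_2\ze_1$ in \eqref{Isi}, has no magnetic component. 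One then derives an equation for the difference of the two $\psi$-fields, which is a linear Schr\"odinger-type equation with source terms multilinear in objects controlled by the $L^4$-norm of $|Ds|$ and $|Ds'|$, and closes the estimate via Strichartz-type / bilinear inequalities and a standard fixed-point argument in critical function spaces.

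The principal obstacle, in my view, is the geometric bookkeeping needed to make the comparison intrinsic. Since $s$ and $s'$ take values in the curved manifold $S_\mu$, the difference $s - s'$ is not meaningful, and the two gauge-transformed $\psi$-fields a priori live in different frame bundles pulled back from $S_\mu$. I would handle this by introducing a common gauge — for example, by parallel-transporting frames along geodesic segments from $s'(x,t)$ to $s(x,t)$ in $S_\mu$ — and then verify carefully that the resulting identifications are compatible with the semidistance $\dot{d}^1$ constructed in Section \ref{proofthm2}. The hypothesis $\dot{d}^1(f,f') \leq \overline{\delta}$, together with a uniform $L^\infty_t L^2_x$ bound on $|Ds|$ and $|Ds'|$ (inherited from the energy-type identity for the gradient), keeps $s$ and $s'$ in a uniform tubular neighborhood of each other and makes this construction well-defined throughout the iteration.

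Granted the small-data perturbation lemma, the theorem follows by a bootstrap: let $J^\ast \subseteq J$ be the maximal open subinterval containing $0$ with $J^\ast \subseteq I(f')$ and $\dot{\rho}^1_{J^\ast}(s, s') \leq A\,\dot{d}^1(f, f')$. Applying the small-data lemma successively on $J_1, \ldots, J_K$ multiplies the relevant distance by a universal constant at each step, so we can take $A = C^K$ depending only on $N(f,J)$. Choosing $\overline{\delta} = \overline{\delta}(N(f,J))$ so that $C^K \overline{\delta} \ll \eta$ preserves the smallness assumption on $\dot{d}^1(s(t_k), s'(t_k))$ at every splitting time, while the blow-up criterion in Theorem \ref{thm1}(b) combined with the pointwise bound $\sup_{t\in J^\ast} \dot{d}^1(s(t),s'(t)) \leq \dot{\rho}^1_{J^\ast}(s,s')$ from \eqref{dist1} precludes $J^\ast$ from being a proper subinterval of $J$, yielding the claimed estimate.
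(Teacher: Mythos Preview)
Your overall strategy---partition $J$ into subintervals of small $L^4$ norm, prove a small-data perturbation lemma, and iterate via a continuity/bootstrap argument---is correct and is exactly what the paper does in the proof of Proposition~\ref{wellpo} (the stability result for the modified spin system) via Lemma~\ref{onestep}.

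However, you misidentify the ``principal obstacle''. The paper's definition of $\dot{d}^1$ (equation~\eqref{di1}) is
\[
\dot{d}^1(f,f')=\inf_{|z|=1}\Big[\|z\phi_1-\phi'_1\|_{L^2}^2+\|z\phi_2-\phi'_2\|_{L^2}^2\Big]^{1/2},
\]
where $\phi_m,\phi'_m$ are the Coulomb-gauge fields of $f,f'$ constructed \emph{independently}, each in its own frame; the only remaining gauge ambiguity is a global constant phase, which is quotiented out by the infimum. Similarly for $\dot{\rho}^1_I$. Thus there is no need for parallel transport, a common gauge, or a tubular-neighborhood argument: once you pass to the $\psi$-fields, the comparison is a purely scalar perturbation problem for the modified spin system~\eqref{tot01}--\eqref{tot1}, and Proposition~\ref{wellpo} gives the required estimate directly. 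The only geometric input needed to close the argument is the observation (stated in Section~\ref{proofthm2}) that the modified spin system is invariant under $\psi\mapsto z\psi$ for $|z|=1$, so the infimum over $z$ in the definition of $\dot{d}^1$ is harmless.

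Your parallel-transport approach could in principle be made to work, but it introduces a genuinely nontrivial compatibility verification (which you yourself flag) between the transported frame and the Coulomb frame used to define $\dot{d}^1$. The paper sidesteps this entirely by building the semidistance out of the Coulomb gauge from the start, so that Theorem~\ref{thm2} is literally a one-line consequence of Proposition~\ref{wellpo} and the phase-invariance observation.
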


The identities in \eqref{dist1} and Theorem \ref{thm2} (with $f=c_Q$) can be combined to prove the following small-data global well-posedness result.

\begin{corollary}\label{smalldata}
There is $\delta_0>0$ such that if $f\in \widetilde{H}^{\sigma_0}$ and $\||Df|\|_{L^2}\leq\delta_0$ then the initial-value problem \eqref{in1} admits a unique global solution $s\in C(\R:\widetilde{H}^{\sigma_0})$ and
\begin{equation*}
\| |Ds|\|_{L^\infty_tL^2_x(\R^2\times \R)}+\||Ds|\|_{L^4_{x,t}(\R^2\times \R)}\lesssim \| |Df|\|_{L^2}.
\end{equation*}
In addition, if $f,f'\in \widetilde{H}^{\sigma_0}$, $\||Df|\|_{L^2},\||Df'|\|_{L^2}\in[0,\delta_0]$, and $s,s'\in C(\R:\widetilde{H}^{\sigma_0})$ are the corresponding solutions, then
\begin{equation*}
\dot{\rho}^1_\R(s,s')\lesssim \dot{d}^1(f,f').
\end{equation*}
\end{corollary}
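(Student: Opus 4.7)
\emph{Plan.} The plan is to deduce the corollary directly from Theorem \ref{thm2} by using a constant map as the reference solution. Fix any base point $Q\in S_\mu$. The constant map $c_Q$ lies in $\HH^{\sigma_0}$ (it is bounded and has vanishing derivatives), and together with $\ze\equiv 0$ it is a classical solution of \eqref{in1}. Hence by the uniqueness part of Theorem \ref{thm1} its maximal solution is $(\R,c_Q)$, and in particular $N(c_Q,\R)=0$.

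\emph{Global existence and scattering bound.} I would first apply Theorem \ref{thm2} with reference datum $c_Q$, interval $J=\R$, and perturbation $f$ satisfying $\||Df|\|_{L^2}\leq\delta_0$. By the first identity of \eqref{dist1},
\[
\dot{d}^1(c_Q,f)=\||Df|\|_{L^2}\leq\delta_0.
\]
Choosing $\delta_0\leq\overline{\delta}(0)$, Theorem \ref{thm2} yields $\R\subseteq I(f)$ and, again by \eqref{dist1},
\[
\||Ds|\|_{L^\infty_tL^2_x}+\||Ds|\|_{L^4_{x,t}}=\dot{\rho}^1_\R(c_Q,s)\lesssim\dot{d}^1(c_Q,f)=\||Df|\|_{L^2}.
\]
Uniqueness is inherited from Theorem \ref{thm1}. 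In particular $N(f,\R)\leq C\delta_0$ for some universal constant $C$.

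\emph{Lipschitz dependence.} For the second claim I would apply Theorem \ref{thm2} a second time, now taking the reference datum to be $f$ itself (whose maximal solution $s$ is global by the previous step) and the perturbation to be $f'$ with $\||Df'|\|_{L^2}\leq\delta_0$. The triangle inequality for the semidistance $\dot{d}^1$ gives
\[
\dot{d}^1(f,f')\leq\dot{d}^1(f,c_Q)+\dot{d}^1(c_Q,f')\leq 2\delta_0.
\]
Shrinking $\delta_0$ further so that $2\delta_0\leq\overline{\delta}(C\delta_0)$, Theorem \ref{thm2} applies and produces $\dot{\rho}^1_\R(s,s')\lesssim_{C\delta_0}\dot{d}^1(f,f')\lesssim\dot{d}^1(f,f')$, with the implicit constant now universal because $C\delta_0$ is a fixed number.

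\emph{Main obstacle.} There is no deep analytic obstacle; all the substantive work is packaged into Theorem \ref{thm2} and the identities \eqref{dist1}. The only mild subtlety is choosing a single $\delta_0>0$ small enough to satisfy simultaneously $\delta_0\leq\overline{\delta}(0)$ and $2\delta_0\leq\overline{\delta}(C\delta_0)$. This is always possible since $\overline{\delta}(\cdot)>0$, and, without loss of generality after replacing it by its nonincreasing rearrangement $M\mapsto\inf_{M'\leq M}\overline{\delta}(M')$, $\overline{\delta}$ may be assumed monotonically nonincreasing, so that shrinking $\delta_0$ only relaxes the second condition down to the first.
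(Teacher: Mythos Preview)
Your proposal is correct and matches the paper's approach: the paper itself only offers the one-sentence hint ``The identities in \eqref{dist1} and Theorem \ref{thm2} (with $f=c_Q$) can be combined,'' and you have filled in exactly the intended details --- Theorem \ref{thm2} with reference $c_Q$ for global existence and the scattering bound, then Theorem \ref{thm2} with reference $f$ for the Lipschitz estimate. The only cosmetic point is that the final paragraph on choosing $\delta_0$ is slightly overwritten; it suffices to note that once $\overline\delta$ is taken nonincreasing, any $\delta_0\leq\min\big(\overline\delta(0),\tfrac12\overline\delta(1),1/C\big)$ works.
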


The global regularity part of Corollary \ref{smalldata} has been proved by Chang and Pashaev \cite{ChPa}, at least in the case $\ep=\mu=1$. The proof in \cite{ChPa} relies on perturbative analysis of the ``modified spin system'' (which is derived using the generalized Hasimoto transform) and the key cancellation of the magnetic terms of the nonlinearities of the modified spin system discussed in the paragraph following \eqref{spiflu} and \eqref{Heisen}. These are two of the main ideas we use in this paper as well. See also \cite{So}, \cite{HaSa}, \cite{Su4} for other small-data global regularity results for spin models.

The rest of the paper is organized as follows: in section \ref{gauge} we derive the modified spin system, by taking derivatives $\partial_ms$ of the spin $s$, and decomposing these derivatives in a suitable Coulomb gauge. The idea of using geometric gauges to analyze spin models appears to have been used for the first time in \cite{ChShUh}, in the context of the Schr\"{o}dinger map equation \eqref{Heisen}. This idea was also used in \cite{KeNa}, \cite{NaShVeZe}, and by the authors in \cite{BeIoKe} and \cite{BeIoKeTa}. The entire construction is geometric and can be written invariantly. We prefer however to use an elementary extrinsic point of view in this paper, as in \cite{BeIoKe} and \cite{BeIoKeTa}, in which we exploit the fact that the targets $S_1$ and $S_{-1}$ are isometrically imbedded into the Euclidean space $(\mathbb{R}^3,\mathbf{g}_1)$ and the Minkowski space $(\mathbb{R}^3,\mathbf{g}_{-1})$ respectively. The point of the construction is to link geometric equations, such as the spin model \eqref{in1}, to systems of nonlinear scalar equations, such as the modified spin system in Proposition \ref{summary}. 

In section \ref{modwell} we analyze the modified spin system and prove regularity and stability results for this system, see Propositions \ref{regular} and \ref{wellpo}. Our analysis is based on Strichartz estimates, as well as estimates for the nonlinearities of the modified equations, both at the critical level and the smooth level. These nonlinear estimates are much easier than the corresponding nonlinear estimates in the Schr\"{o}dinger map model, proved in \cite{BeIoKe} and \cite{BeIoKeTa}, due to the absence of magnetic terms in the nonlinearities of the modified spin system. 

In section \ref{proofth1} we prove Theorem \ref{thm1}: we start from the maximal solution of the modified spin system constructed in Proposition \eqref{regular} and construct the maximal solution of the spin system \eqref{in1}, as well as a suitable Coulomb frame, by solving several linear ODE's.

In section \ref{proofthm2} we prove Theorem \ref{thm2}: we define first the critical semidistance functions $\dot{d}^1$ and $\dot{\rho}_I^1$, by taking suitable critical norms of differences of the fields $\psi_m$ constructed using the Coulomb gauge. A nonlinear construction of this type was used recently by Tao \cite{Ta4}, in the setting of wave maps (using the caloric gauge instead of the Coulomb gauge, which is more suitable for the study of wave maps in $2$ dimensions). Then we use the stability result Proposition \ref{wellpo} on the differentiated fields $\psi_m$ to prove Theorem \ref{thm2}. We prove also several additional properties of the semidistance function $\dot{d}^1$ in Proposition \ref{distprop}: invariance under dilations and translations of the domain $\mathbb{R}^2$, invariance under the action of isometries  of the target $S_\mu$, continuity on $\widetilde{H}^{\sigma_0}\times\widetilde{H}^{\sigma_0}$, and a precise description of the set $\{(f,f')\in\widetilde{H}^{\sigma_0}\times\widetilde{H}^{\sigma_0}:\dot{d}^1(f,f')=0\}$.

In section \ref{link} we derive the connection between the Ishimori systems, which correspond to $\ep=-1$, and the Davey-Stewartson equations, starting from our modified spin systems. This connection is well known, see for example \cite{HaSa}, \cite{MaPrSoSo}, \cite{Pa}, or \cite{Su4}, at least in the focusing case $\mu=1$. The analysis in this paper can then be combined with the global analysis of the defocusing Davey-Stewartson II equation, see \cite{Su1}--\cite{Su3}, to give global solutions of the defocusing Ishimori system in the case of large classical data, constant outside a compact set (see Theorem \ref{thmglobal}). It would be desirable, of course, to prove such a large data global regularity result in the defocusing non-integrable case $(\ep,\mu)=(1,-1)$.

In appendix \ref{appendix} we give a simple self-contained proof of the existence and uniqueness (up to the choice of the frame at one point) of a global Coulomb gauge, suitably synchronized in time. This construction is well known, see for example \cite{ChShUh} or \cite{NaShVeZe}. 

\section{The modified spin system}\label{gauge}

In this section we derive the modified spin system, using a Coulomb gauge. Assume in this section that $\mu\in\{-1,1\}$, $I\subseteq \R$ is an open interval, $t_0\in I$, and $s\in C^3(I:\widetilde{H}^{\sigma_0})$. For any point $p\in \R^2\times I$, we fix a small open set  $U_p$ in $\R^2\times I$, $p\in U_p$, and a $C^3$ orthonormal frame in $T_{s}S_\mu$, i.e. two functions $v,w\in C^3(U_p:\R^3)$ such that 
\begin{equation}\label{defi10}
v\cdot_\mu v=1,\quad v\cdot_\mu s=0,\quad w=s\times_\mu v\qquad\text{ in }U_p.
\end{equation}
Easy computations, using also $s\cdot_\mu s=\mu$ and \eqref{products}, show that, in $U_p$
\begin{equation}\label{defi11}
w\cdot_\mu v=w\cdot_\mu s=0,\quad w\cdot_\mu w=1,\quad v\times_\mu w=\mu s,\quad w\times_\mu s=v.
\end{equation}
We define the differentiated variables $\psi_m:U_p\to\mathbb{C}$,
\begin{equation}\label{definitionso}
  \psi_m=v\cdot_\mu\partial_m s
+iw\cdot_\mu\partial_m s,\qquad m=0,1,2,
\end{equation}
where $\partial_0=\partial_t$, and the real connection coefficients $A_m:U_p\to\R$,
\begin{equation}\label{definitions1}
A_m=w\cdot_\mu\partial_m v,\qquad m=0,1,2.
\end{equation}
Clearly $s\cdot_\mu\partial_m s=v\cdot_\mu\partial_m v=w\cdot_\mu\partial_m w=0$, for $m=0,1,2$. Since the vectors $s(p'),v(p'),w(p')$ form an orthonormal basis for $(\R^3,\mathbf{g}_\mu)$, for every $p'\in U_p$, it follows that
\begin{equation}\label{basis}
\begin{cases}
&\partial_m s=v\Re(\psi_m)+w\Im(\psi_m);\\
&\partial_mv=-s \mu\Re(\psi_m)+wA_m;\\
&\partial_mw=-s \mu\Im(\psi_m)-vA_m.
\end{cases}  
\end{equation}

Using \eqref{basis} it is easy to verify that $\psi_m, A_m$ satisfy the curl type relations
\begin{equation}\label{id1}
(\partial_l+iA_l)\psi_m=(\partial_m+iA_m)\psi_l,\qquad m,l=0,1,2.
\end{equation}
Thus with  the notation $\DD_m=\partial_m+iA_m$ we can rewrite 
this as 
\begin{equation}\label{id2}
\DD_l \psi_m= \DD_m \psi_l,\qquad m,l=0,1,2.
\end{equation}
Direct computations using the definitions and \eqref{basis} show that
\begin{equation}\label{id3}
\partial_lA_m-\partial_mA_l=\mu\Im(\psi_l\overline{\psi_m}):=q_{lm},\qquad m,l=0,1,2.
\end{equation}
Thus the curvature of the connection is given by
\begin{equation}\label{id4}
\DD_l\DD_m-\DD_m\DD_l=iq_{lm},\qquad m,l=0,1,2.
\end{equation}

If, in addition, the frame $(v,w)$ can be defined such that the Coulomb condition
\begin{equation*}
\partial_1A_1+\partial_2A_2=0
\end{equation*}
is satisfied in $U_p$, then the identities \eqref{id3} show that
\begin{equation*}
\Delta A_m=\sum_{l=1}^2\partial_lq_{lm},\quad m=0,1,2.
\end{equation*}
We will show first that there is indeed a global $C^3$ frame $(v,w)$, unique up to the choice of $v(0,0,t_0)$, such that the identities above can be formally inverted, in the sense that  $\psi_l\overline{\psi_m}\in C(I:L^1\cap L^\infty)$, $l,m=0,1,2$, and
\begin{equation*}
A_m=-\sum_{l=1}^2\nabla^{-1}R_l[\mu\Im(\psi_l\overline{\psi_m})],\qquad m=0,1,2.
\end{equation*}
The construction of such global Coulomb frames is, of course, well known, see for example \cite{ChShUh} or \cite{NaShVeZe}. We provide all the details here for the sake of completeness. We start with the following simple observation: while the fields $\psi_m$ depend on the choice of $v$, the functions $\psi_l\overline{\psi_m}$, $l,m=0,1,2$ do not depend  on this choice. Indeed, if $(v',w')$ is another frame around the point $p$ then
\begin{equation*}
v'=v\cos\chi+w\sin\chi,\qquad w'=-v\sin\chi+w\cos\chi,
\end{equation*}
for some real-valued function $\chi$, which gives $\psi'_m=e^{-i\chi}\psi_m$. Therefore, given $s\in C^3(I:\widetilde{H}^{\sigma_0})$ we can define $9$ canonical functions $\psi_l\overline{\psi_m}$, $m,l=0,1,2$. 

Since $s(t)$ is bounded for any $t\in I$, the functions $v(.,.,t),w(.,.,t):\R^2\to\R^3$ are bounded for any $t\in I$, thus  $\psi_l\overline{\psi_m}\in C(I:L^1\cap L^\infty)$. Moreover, if we work only with local $C^3$ frames $v,w$ with derivatives bounded uniformly on compact subintervals $J\subseteq I$, we deduce that $\partial_n(\psi_l\overline{\psi_m})\in C(I:L^1\cap L^\infty)$, $n,m,l=0,1,2$. To summarize, 
\begin{equation}\label{nj15}
\psi_l\overline{\psi_m}\in C^2(\R^2\times I),\quad\psi_l\overline{\psi_m},\partial_n(\psi_l\overline{\psi_m})\in C(I:L^1\cap L^\infty),\quad n,m,l=0,1,2.
\end{equation}
Thus, we can define $C^1$ functions $\widetilde{A}_0,\widetilde{A}_1,\widetilde{A}_2:\mathbb{R}^2\times I\to\mathbb{R}$ by the formulas
\begin{equation}\label{nj14}
\widetilde{A}_m=-\sum_{l=1}^2\nabla^{-1}R_l[\mu\Im(\psi_l\overline{\psi_m})].
\end{equation}
We show now that there are global $C^3$ Coulomb frames $(v,w)$ on $\R^2\times I$ such that the coefficients $A_m=w\cdot_\mu\partial_m v$, $m=0,1,2$ (see \eqref{definitions1}) agree with the coefficients $\widetilde{A}_m$ defined in \eqref{nj14}. More  precisely:

\begin{proposition}\label{vwoni}
Assume $I\subseteq\R$ is an open interval, $t_0\in I$, and $s\in C^3(I:\widetilde{H}^{\sigma_0})$. Assume $Q\in\R^3$, $Q\cdot_\mu s(0,0,t_0)=0$, $Q\cdot_\mu Q=1$. Then there are unique functions $v,w\in C^3(\R^2\times  I:\R^3)$ with the properties
\begin{equation}\label{nl16}
v\cdot_\mu s=0,\quad v\cdot_\mu v=1,\quad w=s\times_\mu v,\quad v(0,0,t_0)=Q,
\end{equation}
and
\begin{equation}\label{nl17}
\widetilde{A}_m=w\cdot_\mu\partial_m v,\qquad m=0,1,2.
\end{equation}
In addition, if $\psi_m=v\cdot_\mu\partial_m s+iw\cdot_\mu\partial_m s$, $m=0,1,2$, then $\psi_m\in C(I:H^4)$ and
\begin{equation}\label{nl18}
(\partial_l+i\widetilde{A}_l)\psi_m=(\partial_m+i\widetilde{A}_m)\psi_l,\qquad m,l=0,1,2.
\end{equation}
and, for $m=0,1,2$,
\begin{equation}\label{nl18.5}
\begin{cases}
&\partial_m s=v\Re(\psi_m)+w\Im(\psi_m);\\
&\partial_mv=-s \mu\Re(\psi_m)+w\widetilde{A}_m;\\
&\partial_mw=-s \mu\Im(\psi_m)-v\widetilde{A}_m.
\end{cases}  
\end{equation}
\end{proposition}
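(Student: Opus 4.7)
The plan is to regard the desired identities \eqref{nl18.5} as a linear ODE system for $v$ (with $s$ and the prescribed $\widetilde{A}_m$ as data), integrate it along coordinate directions, and verify a posteriori that the resulting pair $(v,\,w:=s\times_\mu v)$ satisfies the orthogonality constraints \eqref{nl16} and the gauge condition \eqref{nl17}. The whole argument rests on the curl identity
\begin{equation}\label{mycurlid}
\partial_k\widetilde{A}_m - \partial_m\widetilde{A}_k = q_{km}, \qquad k,m=0,1,2,
\end{equation}
where $q_{km} := \mu\Im(\psi_k\overline{\psi_m})$. For $k,m\in\{1,2\}$, \eqref{mycurlid} is a direct Fourier-side computation using the antisymmetry $q_{ml}=-q_{lm}$ and $q_{ll}=0$. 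For the case involving the time index $k=0$, the Bianchi relation $\partial_0 q_{lm} = \partial_l q_{0m} - \partial_m q_{0l}$ (immediate from the representation $q_{lm} = \partial_l A_m - \partial_m A_l$ in any local frame, compare \eqref{id3}), combined with $\sum_l(-\Delta)^{-1}\partial_l^2 = -\mathrm{Id}$ and $q_{0l}+q_{l0}=0$, yields \eqref{mycurlid} as well. Since $\psi_l\overline{\psi_m}$ is gauge invariant (as explained in the text), $\widetilde{A}_m$ depends only on $s$, and \eqref{nj15} plus standard Calder\'on--Zygmund bounds for the multipliers $\xi_k\xi_l/|\xi|^2$ give $\widetilde{A}_m$ enough regularity to justify all subsequent computations and to obtain eventually $v\in C^3$.

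I then construct $v$ by solving the ODE
\[
\partial_m v = -\mu\,s\,(v\cdot_\mu\partial_m s) + (s\times_\mu v)\,\widetilde{A}_m
\]
successively along three coordinate lines: first the $m=0$ equation along $\{(0,0,t):t\in I\}$ with $v(0,0,t_0)=Q$; then, for each fixed $t$, the $m=1$ equation along $\{(x,0,t):x\in\R\}$ with initial datum $v(0,0,t)$; finally, for each fixed $(x,t)$, the $m=2$ equation along $\{(x,y,t):y\in\R\}$ with initial datum $v(x,0,t)$. Standard ODE theory and the regularity of the coefficients produce $v\in C^3(\R^2\times I:\R^3)$; set $w:= s\times_\mu v$. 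The orthogonality constraints \eqref{nl16} are then obtained by propagation: using $s\cdot_\mu s=\mu$ and \eqref{products}, one checks that $a:= v\cdot_\mu s$ and $b:= v\cdot_\mu v - 1$ satisfy $\partial_m a = 0$ and $\partial_m b = -2\mu a(v\cdot_\mu\partial_m s)$ along each coordinate direction, so the vanishing initial conditions $a(0,0,t_0)=0$ and $b(0,0,t_0)=0$ at the base point give $a\equiv 0$ and $b\equiv 0$ throughout $\R^2\times I$.

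With $(s,v,w)$ orthonormal everywhere, expanding $\partial_m v$ in this basis yields $\partial_m v = -\mu s\Re(\psi_m) + wB_m$ where $B_m:= w\cdot_\mu\partial_m v$, and the remaining formulas in \eqref{nl18.5} follow by the same decomposition applied to $\partial_m w$ and $\partial_m s$. It remains to establish $B_m = \widetilde{A}_m$, which is \eqref{nl17} and the crux of the proof. By construction, $B_2=\widetilde{A}_2$ on all of $\R^2\times I$, $B_1=\widetilde{A}_1$ on $\{y=0\}$, and $B_0=\widetilde{A}_0$ on $\{x=y=0\}$. The calculation that gave \eqref{id3} also yields $\partial_k B_m - \partial_m B_k = \mu\Im(\psi_k\overline{\psi_m}) = q_{km}$, so combined with \eqref{mycurlid} the discrepancy $E_m := B_m - \widetilde{A}_m$ satisfies $\partial_k E_m = \partial_m E_k$ for all $k,m$. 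Since $E_2\equiv 0$ we get $\partial_2 E_m = 0$, so each $E_m$ is independent of $y$; then $E_1\equiv 0$ on $\{y=0\}$ extends to $E_1\equiv 0$ globally; finally $\partial_1 E_0 = \partial_0 E_1 = 0$ together with the $y$-independence of $E_0$ and $E_0(0,0,t)\equiv 0$ force $E_0\equiv 0$. Hence $B_m=\widetilde{A}_m$ globally, and \eqref{nl17} follows. The curl relations \eqref{nl18} are then a routine consequence of \eqref{nl18.5} via equality of mixed partials of $s$; the regularity $\psi_m\in C(I:H^4)$ follows from $\partial_m s\in C(I:H^{\sigma_0-1})$ and the boundedness of $v,w$; uniqueness is immediate since any other such frame differs from $(v,w)$ by a rotation angle $\chi$ with $\partial_m\chi=0$ and $\chi(0,0,t_0)=0$. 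The main obstacle is concentrated in \eqref{mycurlid}: without the precise Fourier-analytic form of the Coulomb potentials $\widetilde{A}_m$ one cannot close the matching argument between $B_m$ and $\widetilde{A}_m$ on the full three-dimensional domain.
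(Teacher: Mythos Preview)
Your argument is correct and follows the same underlying mechanism as the paper's proof in the appendix: construct $v$ by integrating the linear ODE \eqref{nl18.5} successively along coordinate lines, propagate the orthonormality constraints, and close the consistency check using the curl identity $\partial_k\widetilde{A}_m-\partial_m\widetilde{A}_k=q_{km}$ together with the frame identity $\partial_kB_m-\partial_mB_k=q_{km}$. The differences are purely organizational. First, the paper integrates in the order $x_1\to x_2\to t$ whereas you integrate $t\to x_1\to x_2$; this is immaterial. Second, the paper verifies the full ODE in each earlier direction after each extension step (the computation \eqref{blo} and its analogue for the time extension), while you first establish orthonormality globally and then run a single clean argument on the discrepancies $E_m=B_m-\widetilde{A}_m$, showing they form a closed $1$-form that vanishes on nested initial slices. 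Your packaging is arguably tidier, but it encodes exactly the same computation: the key input in both cases is the curl relation for $\widetilde{A}_m$, which you isolate as \eqref{mycurlid} and the paper uses implicitly in \eqref{blo}. One small point: your claim that $\psi_m\in C(I:H^4)$ follows from boundedness of $v,w$ alone is too quick, since derivatives of $\psi_m$ hit $v,w$ as well; both you and the paper defer this to a routine bootstrap (as in Proposition~\ref{vwonetime}), so this is not a gap but deserves the same caveat the paper gives.
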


We provide a complete proof of Proposition \ref{vwoni} in the appendix. 

We convert now the spin system \eqref{in1} into a system of equations
involving the fields $\psi_m$. Assume $I\subseteq R$ is an open set and $s\in C(I:\widetilde{H}^{\sigma_0})$ 
satisfies the equation
\begin{equation}\label{nl19}
\partial_0 s= s \times_\mu (s_{11}+\ep s_{22})+s_1\ze_2-\ep s_2\ze_1,\quad\ze_m=-R_m\nabla^{-1}[2 \mu s\cdot_\mu(s_1 \times_\mu s_2)].
\end{equation} 
We fix a global Coulomb frame $(v,w)$ as in Proposition \ref{vwoni} and define the fields $\psi_m$ and the connection coefficients $A_m=\widetilde{A}_m$, $m=0,1,2$, such that the identities  \eqref{defi10}-\eqref{id4} and \eqref{nj14} hold in $\R^2\times I$. Using \eqref{basis} we have 
\begin{equation*}
\begin{split}
2 \mu s\cdot_\mu(s_1 \times_\mu s_2)&=2 \mu s \cdot_\mu [(v\Re (\psi_1)  + w\Im ( \psi_1) ) \times_\mu (v\Re (\psi_2)  + w\Im ( \psi_2) )]\\
&= 2\mu( \Re (\psi_1) \Im ( \psi_2) - \Im ( \psi_1) \Re (\psi_2))\\
&=-2 q_{12}.
\end{split}
\end{equation*}
It follows that
\begin{equation} \label{xifor}
\ze_1 =2R_1\nabla^{-1}(q_{12}), \qquad \ze_2= 2 R_2\nabla^{-1}(q_{12}).
\end{equation}
Using \eqref{definitionso} and \eqref{basis} into the first equation in \eqref{nl19} we compute
\begin{equation*}
\begin{split}
\partial_0s&= s \times_\mu (s_{11}+\ep s_{22})+ s_1\ze_2-\ep s_2\ze_1\\
    &= s \times_\mu\big[ v(\partial_1 \Re(\psi_1)-A_1\Im(\psi_1))+w(\partial_1\Im(\psi_1)+A_1\Re(\psi_1))-\mu s|\psi_1|^2 \big] \\
    &+\ep s \times_\mu\big[v(\partial_2\Re(\psi_2)-A_2\Im(\psi_2))+w(\partial_2\Im(\psi_2)+A_2\Re(\psi_2))-\mu s|\psi_2|^2\big] \\
&+(v\Re (\psi_1)+ w\Im ( \psi_1))\ze_2-\ep (v\Re (\psi_2)+w\Im ( \psi_2))\ze_1\\
&=v\big[-\partial_1\Im(\psi_1)-\ep\partial_2\Im(\psi_2)-A_1\Re(\psi_1)-\ep A_2\Re(\psi_2)+\ze_2\Re(\psi_1)-\ep\ze_1\Re(\psi_2)\big]\\
&+w\big[\partial_1 \Re(\psi_1)+\ep\partial_2 \Re(\psi_2)-A_1\Im(\psi_1)-\ep A_2\Im(\psi_2)+\ze_2\Im(\psi_1)-\ep\ze_1\Im(\psi_2)\big].
\end{split}
\end{equation*}
Thus
\begin{equation} \label{t43}
\psi_0 = v\cdot_\mu \partial_t s+ i w\cdot_\mu\partial_t s
= i ( \DD_1 \psi_1 +\ep\DD_2 \psi_2) +\ze_2\psi_1-\ep \ze_1 \psi_2.
\end{equation}
Using the \eqref{id2} and \eqref{id4}, for $m=1,2$ we derive
\begin{equation} \label{covaux}
\begin{split}
&\DD_0 \psi_m = \DD_m \psi_0 = i \DD_m  ( \DD_1 \psi_1+\ep\DD_2 \psi_2)  + \DD_m (\ze_2\psi_1-\ep \ze_1 \psi_2) \\
&= i (\DD_1^2+\ep\DD_2^2) \psi_m - q_{m1} \psi_1 -\ep q_{m2}\psi_2 +\ze_2\DD_1 \psi_m -\ep\ze_1\DD_2 \psi_m+\psi_1\partial_m\ze_2 -\ep\psi_2\partial_m \ze_1. 
\end{split}
\end{equation}
By direct computation
\begin{equation*}
\begin{split}
&i (\DD_1^2 +\ep\DD_2^2) \psi\\
&= i (\partial_1^2+\ep\partial_2^2)\psi - 2A_1 \partial_1 \psi -2\ep A_2 \partial_2 \psi 
- \psi\partial_1 A_1 -\ep \psi\partial_2 A_2 - i (A_1^2 +\ep A_2^2) \psi.
\end{split}
\end{equation*}
Hence, the equations for $\psi_m$, $m=1,2$, are
\begin{equation} \label{iscoord}
\begin{split}
&\partial_0 \psi_m - i (\partial_1^2+\ep\partial_2^2)\psi_m = (\ze_2-2A_1)\partial_1\psi_m-\ep(\ze_1+2A_2)\partial_2\psi_m-q_{m1} \psi_1-\ep q_{m2}\psi_2\\
&+\psi_1\partial_m\ze_2-\ep\psi_2\partial_m \ze_1+\psi_m(-\partial_1A_1-\ep\partial_2A_2-iA_1^2-i\ep A_2^2+i\ze_2A_1-i\ep\ze_1A_2-iA_0).
\end{split}
\end{equation}

Using now the identities \eqref{nj14} (recall $A_m=\widetilde{A}_m$) and \eqref{xifor}, we notice that the magnetic terms $(\ze_2-2A_1)\partial_1\psi_m$ and $-\ep(\ze_1+2A_2)\partial_2\psi_m$ in the right-hand side of \eqref{iscoord} vanish. This cancellation, which is due to the correction terms in the spin field models \eqref{Isi}, is the main reason for the simplicity of these models compared to the Heisenberg model.  

To finish our computation, we observe that we have formulas, in terms of the functions $\psi_1,\psi_2$, of all the functions in the right-hand side of \eqref{iscoord}, with the exception of $A_0$. To compute $A_0$, using \eqref{nj14},
\begin{equation} \label{da0}
A_0 = -\mu\sum_{l=1}^2 \nabla^{-1}R_l\Im(\psi_l \overline{\psi_0}).
\end{equation}
Using \eqref{id1}, \eqref{t43} (with $\ze_1=-2A_2$, $\ze_2=2A_1$), and the identity $\overline{\psi_l}\cdot \DD_m\psi_m=\partial_m(\overline{\psi_l}\psi_m)-\psi_m\cdot\overline{\DD_m\psi_l}$, we derive
\begin{equation*}
\begin{split}
&\Im(\psi_l\overline{\psi_0})=- \Re(\overline{\psi_l}\cdot (\DD_1\psi_1+\ep\DD_2 \psi_2)) - \Im( \overline{\psi_l} (2 A_1 \psi_1+2\ep A_2 \psi_2)) \\
&=- \partial_1 \Re(\overline{\psi_l} \psi_1)-\ep\partial_2 \Re(\overline{\psi_l} \psi_2)  + \Re(\psi_1\overline{\DD_1\psi_l})+\ep\Re(\psi_2 \overline{\DD_2\psi_l}) - 2 \Im( \overline{\psi_l} (A_1 \psi_1+\ep A_2 \psi_2))\\
&=- \partial_1 \Re(\overline{\psi_l} \psi_1)-\ep\partial_2 \Re(\overline{\psi_l}\psi_2) +\frac{1}{2}\partial_l\big( |\psi_1|^2+\ep|\psi_2|^2 \big)- 2  \Im( \overline{\psi_l} (A_1 \psi_1+\ep A_2 \psi_2)).
\end{split}
\end{equation*}
Thus
\begin{equation} \label{a0f}
\begin{split}
A_0 &=\mu\sum_{m,l=1}^2 \ep^{m+1} R_lR_m\big(\Re(\overline{\psi_l}\psi_m)\big)+\frac{\mu}{2}(|\psi_1|^2+\ep|\psi_2|^2 \big)\\
&+2\mu\sum_{m,l=1}^2 \ep^{m+1} |\nabla|^{-1} R_l \Im( A_m \psi_m \overline{\psi_l}).
\end{split}
\end{equation}
We summarize our results so far in the following proposition:

\begin{proposition}\label{summary} Assume $s\in C(I:\widetilde{H}^{\sigma_0})$ is a solution of the equation \eqref{nl19}. Assume that $v,w$ is a Coulomb frame on $\R^2\times I$ as in Proposition \ref{vwoni}, and let
\begin{equation}\label{bas1}
\begin{split}
&\psi_m=v\cdot_\mu\partial_m s
+iw\cdot_\mu\partial_m s, \qquad A_m=w\cdot_\mu\partial_m v,\\
&q_{lm}=\partial_lA_m-\partial_mA_l=\mu\Im(\psi_l\overline{\psi_m}),
\end{split}
\end{equation}
for $m,l=0,1,2$. Then $\psi_1,\psi_2\in C(I:H^4)$ and
\begin{equation}\label{bas2}
\begin{split}
&A_2=-\nabla^{-1}R_1(q_{12}),\qquad A_1=\nabla^{-1}R_2(q_{12}),\\
&A_0=\mu\sum_{m,l=1}^2 \ep^{m+1}\big[R_lR_m\big(\Re(\overline{\psi_l}\psi_m)\big)+2|\nabla|^{-1} R_l \Im( A_m \psi_m \overline{\psi_l})\big]+\frac{\mu}{2}\sum_{m=1}^2\ep^{m+1}|\psi_m|^2.
\end{split}
\end{equation}
In addition, the fields $\psi_m$, $m=0,1,2$, satisfy the equations
\begin{equation} \label{bas3}
\begin{split}
&i \partial_t \psi_m +(\partial_1^2+\ep\partial_2^2) \psi_m = i\mathcal{N}_m,\\
&\mathcal{N}_m =-iA_0\psi_m+\sum_{l=1}^2\ep^{l+1}\big[\psi_l(-q_{ml}+2\partial_mA_l)+\psi_m(-\partial_lA_l+iA_l^2)\big].
\end{split}
\end{equation}
and
\begin{equation}\label{bas4}
\psi_0=i\partial_1\psi_1+i\ep\partial_2\psi_2+A_1\psi_1+\ep A_2\psi_2.
\end{equation}
\end{proposition}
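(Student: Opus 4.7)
The plan is to assemble the identities already derived in the text preceding the proposition into the statements \eqref{bas1}--\eqref{bas4}. First I would invoke Proposition \ref{vwoni} applied to $s$ at the base point $(0,0,t_0)\in\R^2\times I$; this produces the Coulomb frame $(v,w)\in C^3(\R^2\times I:\R^3)$, guarantees the regularity $\psi_m\in C(I:H^4)$ for $m=0,1,2$, and supplies the structural identities \eqref{nl18.5} together with the curl relation $\DD_l\psi_m=\DD_m\psi_l$ and the curvature relation $\partial_lA_m-\partial_mA_l=q_{lm}=\mu\Im(\psi_l\overline{\psi_m})$.

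To establish the formulas \eqref{bas2}, I would first substitute the expansions \eqref{basis} of $s_1,s_2$ into $2\mu s\cdot_\mu(s_1\times_\mu s_2)$ and use the orthonormality relations \eqref{defi11} to obtain $2\mu s\cdot_\mu(s_1\times_\mu s_2)=-2q_{12}$; the formulas \eqref{xifor} for $\ze_1,\ze_2$ then follow from the definition of $\ze_m$ in \eqref{nl19}. The expressions for $A_1,A_2$ are obtained by solving the overdetermined pair $\partial_1A_1+\partial_2A_2=0$ (Coulomb) and $\partial_1A_2-\partial_2A_1=q_{12}$ via the Riesz transforms on $\R^2$; comparing with \eqref{xifor} also yields the key identifications $\ze_2=2A_1$ and $\ze_1=-2A_2$. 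For $A_0$, I would start from \eqref{da0} and substitute the formula for $\psi_0$ derived below, expand $\Im(\psi_l\overline{\psi_0})$ using the identity $\overline{\psi_l}\DD_m\psi_m=\partial_m(\overline{\psi_l}\psi_m)-\psi_m\overline{\DD_m\psi_l}$, apply the curl relation $\DD_m\psi_l=\DD_l\psi_m$, and integrate by parts against $\nabla^{-1}R_l$ to produce \eqref{a0f}.

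The formulas \eqref{bas3} and \eqref{bas4} come from transcribing the spin equation \eqref{nl19} into the frame $(v,w)$. Projecting $\partial_ts$ onto $v,w$ via the definition of $\psi_0$ and using \eqref{basis} yields $\psi_0=i(\DD_1\psi_1+\ep\DD_2\psi_2)+\ze_2\psi_1-\ep\ze_1\psi_2$, which becomes \eqref{bas4} after substituting $\ze_2=2A_1$ and $\ze_1=-2A_2$. To obtain \eqref{bas3}, I would apply $\DD_m$ to this formula and use both $\DD_0\psi_m=\DD_m\psi_0$ and the commutator $[\DD_l,\DD_m]=iq_{lm}$ to push $\DD_m$ past $\DD_1,\DD_2$; expanding $\DD_j=\partial_j+iA_j$ then produces \eqref{iscoord}. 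The decisive algebraic step is to observe that the coefficients $\ze_2-2A_1$ of $\partial_1\psi_m$ and $\ze_1+2A_2$ of $\partial_2\psi_m$ in \eqref{iscoord} vanish identically by the Coulomb identifications; collecting the surviving zeroth-order terms yields exactly $\mathcal{N}_m$ in \eqref{bas3}.

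There is no genuine analytic obstacle in the proof: the regularity of all objects is furnished by Proposition \ref{vwoni}, and every manipulation above is algebraic, involving only orthonormal-frame identities, the Hodge/Coulomb decomposition on $\R^2$, and Riesz-transform calculus. The one structural point to be careful about is the cancellation of the magnetic first-order terms in \eqref{iscoord}, which is precisely the feature that distinguishes the corrected spin systems \eqref{Isi} from the Heisenberg model \eqref{Heisen} and that makes the reduced system \eqref{bas3} tractable by standard Strichartz-based methods.
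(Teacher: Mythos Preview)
Your proposal is correct and follows essentially the same route as the paper: the proposition is a summary of the frame computations carried out in section \ref{gauge}, and your outline reproduces those computations (invoke Proposition \ref{vwoni} for the frame and regularity, compute $2\mu s\cdot_\mu(s_1\times_\mu s_2)=-2q_{12}$, identify $\ze_2=2A_1$, $\ze_1=-2A_2$, project the equation to get \eqref{t43}, apply $\DD_m$ and commute to get \eqref{iscoord}, observe the magnetic cancellation, and expand $\Im(\psi_l\overline{\psi_0})$ to get \eqref{a0f}). The only cosmetic difference is that the paper reads off $A_1,A_2$ directly from the defining formula \eqref{nj14} rather than re-solving the Coulomb/curl system, and it derives $\psi_0$ before $A_0$ rather than forward-referencing as you do.
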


\section{Regularity and stability of the modified spin system}\label{modwell}

In this section we analyze the modified spin system constructed  in Proposition \ref{summary}. We will prove first a large-data local regularity result.

\begin{proposition}\label{regular} Assume $\phi=(\phi_1,\phi_2)\in H^{\sigma_0-1}\times H^{\sigma_0-1}$. 

(a) There is a unique maximal open interval $I(\phi)$, $0\in I(\phi)$, and a unique solution $\psi=(\psi_1,\psi_2)\in C(I(\phi):H^{\sigma_0-1})\times C(I(\phi):H^{\sigma_0-1})$ of the system of equations
\begin{equation}\label{tot01}
i \partial_t \psi_m +(\partial_1^2+\ep\partial_2^2) \psi_m = i\mathcal{N}_m,\qquad\psi_m(0)=\phi_m,
\end{equation}
where
\begin{equation}\label{tot1}
\begin{split}
&\mathcal{N}_m =-iA_0\psi_m+\sum_{l=1}^2\ep^{l+1}\big[\psi_l(\partial_lA_m+\partial_mA_l)+\psi_m(-\partial_lA_l+iA_l^2)\big],\\
&q_{12}=\mu\Im(\psi_1\overline{\psi_2}),\qquad A_2=-\nabla^{-1}R_1(q_{12}),\qquad A_1=\nabla^{-1}R_2(q_{12}),\\
&A_0=\mu\sum_{m,l=1}^2 \ep^{m+1}\big[R_lR_m\big(\Re(\overline{\psi_l}\psi_m)\big)+2|\nabla|^{-1} R_l \Im( A_m \psi_m \overline{\psi_l})\big]+\frac{\mu}{2}\sum_{m=1}^2\ep^{m+1}|\psi_m|^2.
\end{split}
\end{equation}

(b) Let $I_+(\phi)=I(\phi)\cap[0,\infty)$, $I_-(\phi)=I(\phi)\cap(-\infty,0]$. Then
\begin{equation*}
\begin{split}
&\text{ if }I_+(\phi)\text{ bounded }\text{ then }\sum_{m=1}^2\|\psi_m\|_{L^4_{x,t}(\R^2\times I_+(\phi))}=\infty,\\
&\text{ if }I_-(\phi)\text{ bounded }\text{ then }\sum_{m=1}^2\|\psi_m\|_{L^4_{x,t}(\R^2\times I_-(\phi))}=\infty.
\end{split}
\end{equation*}

(c) Assume, in addition, that the compatibility condition 
\begin{equation}\label{tot2}
(\partial_1+iA_1)\psi_2=(\partial_2+iA_2)\psi_1
\end{equation}
holds on $\R^2\times\{0\}$. Then the identities (compare with \eqref{id2} and \eqref{id4})
\begin{equation}\label{ea9}
\DD_l\psi_m=\DD_m\psi_l,\qquad \partial_lA_m-\partial_mA_l=\mu\Im(\psi_l\overline{\psi_m}),\qquad m,l=0,1,2,
\end{equation} 
hold in $\R^2\times I(\phi)$, where $\DD_m=\partial_m+iA_m$ and
\begin{equation*}
\psi_0=i(\DD_1\psi_1+\ep\DD_2\psi_2)+2A_1\psi_1+2\ep A_2\psi_2.
\end{equation*}
\end{proposition}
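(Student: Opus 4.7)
For part (a), I would run a Strichartz-based contraction. The linear operator $i\partial_t+(\partial_1^2+\ep\partial_2^2)$ generates a unitary group on $L^2$ for which the usual Strichartz estimates hold both in the elliptic case $\ep=1$ and the ultrahyperbolic case $\ep=-1$; in particular $L^4_{x,t}$ is admissible. Writing $\psi_m$ in Duhamel form in $C([-T,T]:H^{\sigma_0-1})$, one estimates $\mathcal{N}_m$ using (i) $H^{\sigma_0-1}$-boundedness of the Fourier multipliers $\partial_k\nabla^{-1}R_{l'}$, giving $\|\partial_kA_l\|_{H^{\sigma_0-1}}\lesssim\|\psi\|_{H^{\sigma_0-1}}^2$; (ii) an elementary $L^\infty$ bound on $A_l$ and $A_0$ themselves, based on $q_{12}\in L^1\cap L^2$ and the two-dimensional smoothing of the low-frequency part of $\nabla^{-1}R_{l'}$; and (iii) the algebra property of $H^{\sigma_0-1}$, since $\sigma_0-1=9>1$. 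Each summand of $\mathcal{N}_m$ is then at least cubic in $\psi$, so $\|\mathcal{N}_m\|_{H^{\sigma_0-1}}\leq P(\|\psi\|_{H^{\sigma_0-1}})$ for a polynomial $P$ vanishing to order three at zero. A contraction on a small ball for $T=T(\|\phi\|_{H^{\sigma_0-1}})$, together with the standard gluing and uniqueness argument, produces the maximal solution $(I(\phi),\psi)$.

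For part (b), I would argue contrapositively: if $\sum_m\|\psi_m\|_{L^4_{x,t}(\R^2\times I_+(\phi))}<\infty$, then $\|\psi(t)\|_{H^{\sigma_0-1}}$ must stay bounded as $t\uparrow\sup I_+(\phi)$, contradicting maximality. The key structural fact is that $\mathcal{N}_m$ contains no magnetic term $A_l\partial_l\psi_m$ (the cancellation observed after \eqref{iscoord}): every factor of $\psi$ in $\mathcal{N}_m$ is undifferentiated, so in a Strichartz--Gronwall argument one places each $\psi$ in $L^4_{x,t}$ while the remaining factor ($\partial_kA_l$ or $A_l^2$) goes into a dual Strichartz norm controlled by the $H^{\sigma_0-1}$ norm. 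Partitioning $I_+(\phi)$ into finitely many subintervals on which $\sum_m\|\psi_m\|_{L^4_{x,t}}$ is below a fixed smallness threshold, and iterating, then propagates the $H^{\sigma_0-1}$ bound through all of $I_+(\phi)$.

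For part (c), set $G=\DD_1\psi_2-\DD_2\psi_1$, which vanishes at $t=0$ by hypothesis. Differentiating $G$ in $t$, substituting \eqref{tot01}, and using the relations $\partial_1A_1+\partial_2A_2=0$ and $\partial_1A_2-\partial_2A_1=\mu\Im(\psi_1\overline{\psi_2})$ (both of which follow automatically from the explicit formulas for $A_1,A_2$ in terms of $q_{12}$), I expect the computation to produce a linear Schr\"odinger-type equation $(i\partial_t+\partial_1^2+\ep\partial_2^2)G=\LL(G,\overline G)$ whose coefficients are smooth functions of $\psi,\partial\psi,A,\partial A$; linear uniqueness then forces $G\equiv 0$ on $I(\phi)$. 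The identities in \eqref{ea9} with index $0$ are then algebraic consequences of $G\equiv 0$, the explicit formula for $\psi_0$, and the equations \eqref{tot01}: expanding $\DD_m\psi_0$, using $G=0$ to rewrite $\DD_m\DD_l\psi_l$ as $\DD_l\DD_l\psi_m$ plus a curvature commutator term, and matching terms reproduces $\partial_t\psi_m+iA_0\psi_m=\DD_0\psi_m$; the curvature identity $\partial_tA_m-\partial_mA_0=\mu\Im(\psi_0\overline\psi_m)$ then follows by differentiating the formulas $A_m=\pm\nabla^{-1}R_{l'}q_{12}$ in $t$, substituting \eqref{tot01} for $\partial_t\psi_l$, and simplifying against the formula for $A_0$.

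The main obstacle I expect is the derivation of the closed linear equation for $G$ in part (c): one must organize the cancellations so that no derivative of $G$ remains on the right-hand side with an uncontrolled coefficient, and so that all nonlocal terms arising from $\partial_tq_{12}$ in $\partial_tA_m$ collapse to expressions that are manifestly linear in $G$ and $\overline G$. This bookkeeping is again where the magnetic-term cancellation in $\mathcal{N}_m$ and the Coulomb identity $\partial_1A_1+\partial_2A_2=0$ play the essential role.
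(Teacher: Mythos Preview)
Your outline is essentially correct and tracks the paper's proof closely, with a few technical differences worth noting.

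For part (a) you propose a pure $C_tH^{\sigma_0-1}$ contraction using the algebra property; the paper instead runs the fixed point in the Strichartz-type space $X^{\sigma_0-1}(I)$, exploiting smallness of the free evolution in $L^4_{x,t}\cap L^{12}_tL^{12/5}_x$ on a short interval together with the nonlinear estimate $\|\mathcal{N}_m\|_{Y^{\sigma_0-1}}\lesssim a^2\|\psi\|_{X^{\sigma_0-1}}$. Both routes work; the paper's choice is motivated by the fact that the same Littlewood--Paley machinery is reused in part (b).

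For part (b) the strategy (partition into subintervals of small $L^4$ norm, then iterate) matches the paper, but your sketch skips a step the paper actually needs: they first prove an a priori $L^\infty_tL^2_x$ bound by computing $\frac{d}{dt}\int|\psi_m|^2$ directly from the equation, and use it (interpolated with $L^4_{x,t}$) to get smallness of $\|\psi\|_{L^{12}_tL^{12/5}_x}$ on each subinterval, which is the hypothesis of their nonlinear estimate. Your phrasing ``the remaining factor goes into a dual Strichartz norm controlled by the $H^{\sigma_0-1}$ norm'' is not quite how it closes: when $\sigma_0-1$ derivatives hit the undifferentiated $\psi_m$, you must place $|\nabla|^{\sigma_0-1}\psi_m$ in $L^4_{x,t}$ as well, i.e.\ bootstrap in $\|\psi\|_{L^\infty_tH^{\sigma_0-1}}+\||\nabla|^{\sigma_0-1}\psi\|_{L^4_{x,t}}$. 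With that adjustment your version goes through.

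For part (c) your plan coincides with the paper's. Setting $F=\DD_1\psi_2-\DD_2\psi_1$, the paper introduces the auxiliary quantities $H_m=\DD_m\psi_0-\DD_0\psi_m$ and $G_m=\partial_mA_0-\partial_0A_m-\mu\Im(\psi_m\overline{\psi_0})$ and shows each is linear in $F$: the $H_m$ are local expressions in $\partial F$, while $G_m=-\mu\ep^{?}(R_1^2-R_2^2)\Re(F\overline{\psi_\cdot})+\ldots$ are nonlocal, exactly the Riesz-transform terms you anticipated. The resulting equation $\DD_0F+iG_1\psi_2-iG_2\psi_1=i(\partial_1^2+\ep\partial_2^2)F+(\text{zeroth order in }F)$ is then closed by an $L^2$ Gronwall (multiply by $\overline{F}$, integrate, use $\|G_m\|_{L^2}\lesssim\|F\|_{L^2}\|\psi\|_{L^\infty}$), after which $H_m=G_m=0$ give the index-$0$ identities in \eqref{ea9} for free.
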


We will also prove a stability result.

\begin{proposition}\label{wellpo}
Assume $\phi\in H^{\sigma_0-1}\times H^{\sigma_0-1}$ and construct the maximal extension $(I(\phi),\psi)$ as in Proposition \ref{regular}. Assume  $J\subseteq I(\phi)$ is a compact interval, $0\in J$. Let
\begin{equation*}
N_{\psi,J}=\| |\psi|\|_{L^4_{x,t}(\R^2\times J)}+\||\phi|\|_{L^2},\quad|\psi|^2=|\psi_1|^2+|\psi_2|^2,\,\,|\phi|^2=|\phi_1|^2+|\phi_2|^2.
\end{equation*}
Then there is $\delta_0=\delta_0(N_{\psi,J})$ with the following property: if $\phi'\in H^{\sigma_0-1}\times H^{\sigma_0-1}$ and
\begin{equation*}
\||\phi'-\phi|\|_{L^2(\R^2)}=\delta\leq\delta_0
\end{equation*}
then
\begin{equation*}
J\subseteq I(\phi')\quad\text{ and }\quad\sum_{m=1}^2\|\mathcal{N}'_m-\mathcal{N}_m\|_{(L^1_tL^2_x+L^{4/3}_{x,t})(\R^2\times J)}\lesssim_{N_{\psi,J}}\delta.
\end{equation*}
\end{proposition}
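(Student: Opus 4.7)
The plan is a Strichartz-based bootstrap argument for the 2D Schr\"odinger-type operator $i\partial_t+(\partial_1^2+\ep\partial_2^2)$. The admissible endpoint pair $(L^\infty_tL^2_x,L^4_{x,t})$ and its dual $L^1_tL^2_x+L^{4/3}_{x,t}$ provide the functional framework: if $(i\partial_t+\partial_1^2+\ep\partial_2^2)u=iF$ on $\R^2\times J_k$ with $u(t_k)=u_0$, then
\begin{equation*}
\|u\|_{L^\infty_tL^2_x\cap L^4_{x,t}(\R^2\times J_k)}\lesssim\|u_0\|_{L^2}+\|F\|_{L^1_tL^2_x+L^{4/3}_{x,t}(\R^2\times J_k)}.
\end{equation*}
Since $\psi\in L^4_{x,t}(\R^2\times J)$, I first partition $J$ into $K=K(N_{\psi,J},\eta)$ consecutive compact subintervals $J_0,\ldots,J_{K-1}$ on each of which $\|\psi\|_{L^4_{x,t}(\R^2\times J_k)}\le\eta$, with $\eta$ a small absolute constant to be chosen at the end.

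The core step is a multilinear estimate for the difference $\delta\mathcal{N}_m:=\mathcal{N}'_m-\mathcal{N}_m$ in terms of $\delta\psi:=\psi'-\psi$. Writing each term in \eqref{tot1} schematically as $\psi\cdot R(\psi\bar\psi)$ (trilinear) and $\psi\cdot|\nabla|^{-1}R\bigl(\nabla^{-1}R(\psi\bar\psi)\cdot\psi\bar\psi\bigr)$ (quintilinear), the crucial observation is that \emph{no derivative ever falls on $\psi$} in $\mathcal{N}_m$: the would-be magnetic terms $(\zeta_2-2A_1)\partial_1\psi_m$ and $-\ep(\zeta_1+2A_2)\partial_2\psi_m$ canceled when \eqref{iscoord} was reduced to \eqref{bas3}. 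Expanding each multilinear expression as a telescoping sum of differences in $\psi-\psi'$, H\"older in mixed Lebesgue spaces combined with the $L^2$-boundedness of the Riesz transforms and appropriate $L^p\to L^q$ mapping properties of $\nabla^{-1}$ should yield
\begin{equation*}
\|\delta\mathcal{N}\|_{L^1_tL^2_x+L^{4/3}_{x,t}(\R^2\times J_k)}\le C(N_{\psi,J})(\eta+\eta^3)\,\|\delta\psi\|_{L^\infty_tL^2_x\cap L^4_{x,t}(\R^2\times J_k)},
\end{equation*}
provided $\|\psi'\|_{L^4_{x,t}(\R^2\times J_k)}\le 2\eta$.

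The argument is then closed by continuity in the length of the interval. Define
\begin{equation*}
J^\ast:=\{t\in J:t\in I(\phi')\text{ and }\|\psi'\|_{L^4_{x,t}(\R^2\times(J\cap[0,t]))}\le N_{\psi,J}+1\},
\end{equation*}
which contains a neighborhood of $0$ by Proposition \ref{regular}(a) and is closed in $I(\phi')\cap J$. On each $J_k$ reached by the bootstrap, Strichartz applied to $\delta\psi$, combined with the multilinear estimate above and the choice of $\eta$ so that $C(N_{\psi,J})(\eta+\eta^3)\le 1/2$, lets the nonlinear contribution be absorbed on the left, yielding $\|\delta\psi\|_{L^\infty_tL^2_x\cap L^4_{x,t}(\R^2\times J_k)}\lesssim\|\delta\psi(t_k)\|_{L^2}$; iterating through $J_0,J_1,\ldots$ gives $\|\delta\psi\|_{L^\infty_tL^2_x\cap L^4_{x,t}(\R^2\times J\cap[0,t])}\lesssim_{N_{\psi,J}}\delta$ for $t\in J^\ast$. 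Consequently $\|\psi'\|_{L^4}\le\|\psi\|_{L^4}+C_{N_{\psi,J}}\delta\le N_{\psi,J}+1/2$ for $\delta\le\delta_0(N_{\psi,J})$, so $J^\ast$ is also open in $I(\phi')\cap J$. The blow-up criterion of Proposition \ref{regular}(b) then forces $J\subseteq I(\phi')$, and one more application of the multilinear estimate delivers the advertised bound on $\|\delta\mathcal{N}\|_{L^1_tL^2_x+L^{4/3}_{x,t}(\R^2\times J)}$. The analogous argument on $J\cap(-\infty,0]$ is identical.

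The main obstacle is the multilinear estimate for $\delta\mathcal{N}$. The trilinear pieces reduce to pure H\"older in $L^4_{x,t}$ together with the $L^2$-boundedness of $R_jR_k$. The quintilinear contribution coming from the last summand of $A_0$ in \eqref{tot1}, namely $|\nabla|^{-1}R_l\Im(A_m\psi_m\bar\psi_l)\cdot\psi_m$, is the most delicate: one must control $A_m$ in an auxiliary mixed-norm space (for instance $L^2_tL^\infty_x$ via Sobolev embedding, starting from $\|\psi\bar\psi\|_{L^2_{x,t}}\le\|\psi\|_{L^4_{x,t}}^2$) and bookkeep how each factor changes under $\psi\mapsto\psi'$. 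Thanks to the magnetic cancellation, however, no derivatives fall on $\psi$ in any of the terms, which makes this otherwise intricate step essentially routine multilinear harmonic analysis rather than a genuine smoothing/bilinear-Strichartz question.
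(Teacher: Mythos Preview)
Your approach is essentially the paper's: partition $J$ into finitely many subintervals on which the solution is small in a Strichartz norm, then on each piece run a Strichartz-plus-multilinear bootstrap for the difference $\delta\psi$ (this is exactly the content of Lemma~\ref{onestep}, which is then iterated). One technical point in your sketch fails as written, however: your suggested control of $A_m$ in $L^2_tL^\infty_x$ starting from $\|\psi\bar\psi\|_{L^2_{x,t}}$ would require $\nabla^{-1}:L^2(\R^2)\to L^\infty(\R^2)$, which is the borderline Hardy--Littlewood--Sobolev case in two dimensions and is false. The paper instead places $A_m$ in $L^3_tL^6_x$ via $\nabla^{-1}:L^{3/2}_x\to L^6_x$ together with $\|q_{12}\|_{L^3_tL^{3/2}_x}\lesssim\|\psi\|_{L^4_{x,t}}\|\psi\|_{L^{12}_tL^{12/5}_x}$ (see~\eqref{al12}); this is precisely why the paper partitions according to smallness of $\|\psi\|_{L^{12}_tL^{12/5}_x\cap L^4_{x,t}}$ rather than $L^4_{x,t}$ alone, and why the nonlinear difference bound~\eqref{nonlin3} comes with an absolute constant instead of one depending on $N_{\psi,J}$.

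The fix on your side is minor: since $L^{12}_tL^{12/5}_x$ interpolates between $L^\infty_tL^2_x$ and $L^4_{x,t}$, your framework already has access to it once you establish $\|\psi\|_{L^\infty_tL^2_x(\R^2\times J)}\lesssim 1+N_{\psi,J}^2$, which the paper obtains by a short energy computation (see~\eqref{nj2}). With that in hand, your partition by $\|\psi\|_{L^4_{x,t}(J_k)}\le\eta$ automatically gives $\|\psi\|_{L^{12}_tL^{12/5}_x(J_k)}\lesssim N_{\psi,J}^{4/3}\eta^{1/3}$, so choosing $\eta=\eta(N_{\psi,J})$ small enough closes the bootstrap exactly as you describe. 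The stated power $\eta+\eta^3$ in your multilinear estimate should be $\eta^2+\eta^4$ (three $L^4$ factors for the cubic terms, five for the quintic), but this is cosmetic.
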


\subsection{Linear and nonlinear estimates}\label{estimates} The linear evolution associated to the modified spin system is
\begin{equation} \label{li}
\begin{split}
i \partial_t u + (\partial_1^2 + \epsilon \partial_2^2) u = g.
\end{split}
\end{equation}
It was established in \cite{GhSa} that this linear evolution enjoys dispersive properties similar to those of the Schr\"odinger evolution, in the sense that the standard Strichartz estimates hold.

\begin{lemma} If $(p,p')$ and $(q,q')$ are dual pairs, $\frac{1}{p} + \frac{1}{q}=\frac12$, $2 < p \leq \infty$, $I\subseteq\R$ is an open interval and $t_0\in I$, then for any solution of \eqref{li} on $\R^2\times I$, 
\begin{equation} \label{listr}
\| u \|_{(L^\infty_tL^2_x\cap L^p_t L^q_x)(\R^2\times I)} \lesssim_p \| u(t_0) \|_{L^2} + \| g\|_{(L^1_tL^2_x+L^{p'}_t L^{q'}_x)(\R^2\times I)}.
\end{equation}
\end{lemma}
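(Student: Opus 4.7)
The plan is to reduce the estimate to the standard abstract Strichartz machinery, with the sole nontrivial input being the fixed-time dispersive bound for the propagator $U(t) = e^{it(\partial_1^2+\epsilon\partial_2^2)}$.

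First I would write $U(t)$ as a Fourier multiplier with symbol $e^{-it(\xi_1^2+\epsilon\xi_2^2)}$. The quadratic form $\xi_1^2+\epsilon\xi_2^2$ is nondegenerate for both $\epsilon=1$ (elliptic, standard Schr\"odinger) and $\epsilon=-1$ (hyperbolic Schr\"odinger treated in \cite{GhSa}). In both cases the kernel can be computed explicitly by a Gaussian / oscillatory Gaussian integral, giving a kernel of modulus $\lesssim |t|^{-1}$, and hence the dispersive bound
\begin{equation*}
\| U(t) f \|_{L^\infty(\R^2)} \lesssim |t|^{-1}\| f \|_{L^1(\R^2)},\qquad t\neq 0.
\end{equation*}
Combined with the trivial $L^2$ conservation $\| U(t) f\|_{L^2} = \| f\|_{L^2}$ and Riesz--Thorin interpolation, this yields
\begin{equation*}
\| U(t) f\|_{L^q(\R^2)}\lesssim |t|^{-(1-2/q)}\| f\|_{L^{q'}(\R^2)},\qquad 2\leq q\leq\infty.
\end{equation*}

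With this dispersive bound in hand, the homogeneous Strichartz estimate
\begin{equation*}
\| U(t) u_0 \|_{L^p_t L^q_x(\R^2\times\R)} \lesssim \| u_0\|_{L^2}
\end{equation*}
for admissible pairs $(p,q)$ with $\tfrac{1}{p}+\tfrac{1}{q}=\tfrac12$, $p>2$, follows from the $TT^*$ argument of Ginibre--Velo (or the abstract framework of Keel--Tao). Conservation of $L^2$ gives the $L^\infty_t L^2_x$ endpoint directly. The inhomogeneous estimate
\begin{equation*}
\Big\|\int_{t_0}^t U(t-s)g(s)\,ds\Big\|_{L^p_tL^q_x}\lesssim \| g\|_{L^{\widetilde{p}'}_t L^{\widetilde{q}'}_x}
\end{equation*}
for any two admissible pairs $(p,q)$ and $(\widetilde p,\widetilde q)$ follows by combining the dual of the homogeneous estimate with the Christ--Kiselev lemma (to pass from the retarded to the truncated integral); one only needs the single-pair version $(p,q)=(\widetilde p,\widetilde q)$ and the trivial $(p,q)=(\infty,2)$, $(\widetilde p,\widetilde q)=(p,q)$ versions in order to cover the mixed-norm bound appearing in \eqref{listr}. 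Adding the two inhomogeneous pieces $g\in L^1_t L^2_x$ and $g\in L^{p'}_tL^{q'}_x$ yields the stated estimate on $\R\times\R^2$, and restriction to $\R^2\times I$ is standard: extend $g$ by zero outside $I$, solve the global Cauchy problem with datum $u(t_0)$, and use uniqueness to identify the solution on $I$.

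The only step requiring any care in the hyperbolic case $\epsilon=-1$ is the dispersive kernel bound, since $\partial_1^2-\partial_2^2$ is not elliptic and standard Schr\"odinger intuition needs to be adapted; however this is exactly the content of \cite{GhSa} and can also be derived by hand via an oscillatory Gaussian computation, so there is no genuine obstacle. The rest is routine application of the Keel--Tao / Christ--Kiselev package.
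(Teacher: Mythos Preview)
Your proposal is correct and follows the standard route: fixed-time dispersive bound $\|U(t)f\|_{L^\infty}\lesssim |t|^{-1}\|f\|_{L^1}$ for the nondegenerate quadratic phase $\xi_1^2+\epsilon\xi_2^2$, then the $TT^\ast$/Keel--Tao machinery and Christ--Kiselev for the inhomogeneous part. The paper itself does not give a proof of this lemma; it simply cites \cite{GhSa} for the fact that the hyperbolic Schr\"odinger propagator obeys the same dispersive and Strichartz estimates as the elliptic one, which is exactly the content of your sketch.
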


To control higher regularity norms it is convenient to use Littlewood-Paley decompositions. Given an open interval $I\subseteq\mathbb{R}$ we define the Banach  spaces $X^\sigma(I)$, $\sigma\geq 0$,
\begin{equation} \label{defx}
\begin{split}
X^\sigma(I)&=\{\phi\in C(I:H^\sigma):\\
&\| \phi \|_{X^\sigma(I)} =\big( \sum_{k \in \Z} (1+2^{2\sigma k}) \| P_k \phi \|^2_{(L^\infty_t L^2_x\cap L^4_{x,t})(\R^2\times  I)} \big)^{1/2}<\infty\},
\end{split}
\end{equation}
where $P_k$ denote smooth Littlewood-Paley projections.\footnote{More precisely, the operators $P_k$, $k\in\mathbb{Z}$, are defined by the Fourier multipliers $\xi\to\chi_k(|\xi|)$, where $\chi_k(\mu)=\eta_0(\mu/2^k)-\eta_0(\mu/2^{k-1})$ and $\eta_0:\R\to[0,1]$ is an even smooth function supported in the interval $[-8/5,8/5]$ and equal to $1$ in the interval $[-5/4,5/4]$.} We will measure the nonlinearities $\mathcal{N}_m$ in the normed spaces $Y_{T}^\sigma$ defined by the norm
\begin{equation}\label{defy}
\| \phi \|_{Y^\sigma(I)} = \big( \sum_{k \in \Z} (1+2^{2\sigma k}) \| P_k \phi \|^2_{L^1_tL^2_x+L^{4/3}_{x,t}(\R^2\times I)} \big)^{1/2}.
\end{equation}
It follows from \eqref{listr} that if $I\subseteq\R$ is  an open interval, $t_0\in I$, and $i \partial_t u + (\partial_1^2 + \epsilon \partial_2^2) u = g$ on $\R^2\times I$, then
\begin{equation}\label{listr2}
\begin{split}
&\|u\|_{(L^\infty_t L^2_x\cap L^4_{x,t})(\R^2\times  I)}\lesssim\|u(t_0)\|_{L^2}+\|g\|_{(L^1_tL^2_x+L^{4/3}_{x,t})(\R^2\times I)},\\
&\| u \|_{X_I^\sigma} \lesssim\| u(t_0)\|_{H^\sigma} + \|g\|_{Y^\sigma_I},\qquad\text{  for any }\sigma\in[0,\sigma_0-1].
\end{split}
\end{equation}
Using the Littlewood-Paley square function estimate we notice that
\begin{equation}\label{ea2}
\begin{split}
&\|\phi\|_{(L^\infty_t L^2_x\cap L^4_{x,t})(\R^2\times  I)}\lesssim \|\phi\|_{X^0(I)},\\
&\|\phi\|_{Y^0(I)}\lesssim \|\phi\|_{(L^1_tL^2_x+L^{4/3}_{x,t})(\R^2\times I)}.
\end{split}
\end{equation}

We estimate now the nonlinearities $\mathcal{N}_m$.

\begin{proposition} \label{propnon} Assume that $I\subseteq\R$ is an open interval, $\psi_m\in C(I:H^4)$, $m=1,2$, and define $\mathcal{N}_m$ as in \eqref{tot1}. Assume also that
\begin{equation}\label{tobootstrap}
\sum_{m=1}^2\| \psi_m\|_{(L^4_{x,t}\cap L^{12}_tL^{12/5}_x)(\R^2\times I)}\leq a\leq 1.
\end{equation}
Then
\begin{equation}\label{nonlin1}
\sum_{m=1}^2\|\mathcal{N}_m \|_{(L^1_tL^2_x+L^{4/3}_{x,t})(\R^2\times I)} \lesssim a^3,
\end{equation}
and, if $\psi_m\in X^{\sigma_0-1}(I)$, $m=1,2$, then 
\begin{equation}\label{nonlin2}
\sum_{m=1}^2\|\mathcal{N}_m \|_{Y^{\sigma_0-1}(I)} \lesssim a^2\sum_{m=1}^2 \|\psi_m\|_{X^{\sigma_0-1}(I)}.
\end{equation}
Assume, in addition, that $\psi'_m\in C(I:H^4)$, $m=1,2$, also satisfy \eqref{tobootstrap}, $\mathcal{N}'_m$ are defined as in \eqref{tot1}, and let
\begin{equation*}
b=\sum_{m=1}^2\|\psi'_m-\psi_m\|_{(L^{12}_tL^{12/5}_x\cap L^4_{x,t})(\R^2\times I)}\leq 2a.
\end{equation*}
Then
\begin{equation}\label{nonlin3}
\sum_{m=1}^2\|\mathcal{N}'_m-\mathcal{N}_m \|_{(L^1_tL^2_x+L^{4/3}_{x,t})(\R^2\times I)} \lesssim a^2b,
\end{equation}
and, if $\psi'_m\in X^{\sigma_0-1}(I)$, $m=1,2$, then
\begin{equation}\label{nonlin4}
\begin{split}
\sum_{m=1}^2\|\mathcal{N}'_m-\mathcal{N}_m \|_{Y^{\sigma_0-1}(I)}&\lesssim a^2\sum_{m=1}^2 \|\psi'_m-\psi_m\|_{X^{\sigma_0-1}(I)}\\
&+ab\sum_{m=1}^2(\|\psi_m\|_{X^{\sigma_0-1}(I)}+\|\psi'_m\|_{X^{\sigma_0-1}(I)}).
\end{split}
\end{equation}
\end{proposition}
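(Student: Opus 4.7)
The strategy is to decompose $\mathcal{N}_m$ into its multilinear constituents in $\psi$, bound the nonlocal fields $A_0, A_1, A_2$ in well-chosen space-time mixed norms, and then close everything with H\"older and the two Strichartz norms packaged into \eqref{tobootstrap}.

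The first step is to dispose of the connection coefficients. Because $A_m = \pm \nabla^{-1}R_n(\mu\Im(\psi_1\overline{\psi_2}))$ for $m=1,2$, the two-dimensional Hardy--Littlewood--Sobolev inequality applied pointwise in $t$ gives $\|A_m(t)\|_{L^3_x} \lesssim \|\psi(t)\|_{L^{12/5}_x}^2$, hence $\|A_m\|_{L^6_tL^3_x} \lesssim a^2$. Interpolating the two Strichartz norms in the bootstrap hypothesis yields $\|\psi\|_{L^8_tL^{8/3}_x} \lesssim a$, and the same HLS argument then upgrades this to $\|A_m\|_{L^4_{x,t}} \lesssim a^2$. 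The derivative $\partial_n A_m = R_nR_l(q_{12})$ is merely a Calder\'on--Zygmund operator applied to the quadratic expression $\psi_1\overline{\psi_2}$, so $\|\partial A\|_{L^2_{x,t}} \lesssim \|\psi\|_{L^4_{x,t}}^2 \lesssim a^2$. For $A_0$ one treats the bilinear piece $R_lR_m(\Re(\overline{\psi_l}\psi_m)) + \mu|\psi_m|^2/2$ identically by Mikhlin and H\"older, while the quartic tail $|\nabla|^{-1}R_l\Im(A_m\psi_m\overline{\psi_l})$ is controlled by one more HLS, using $A_m\in L^6_tL^3_x$ together with $\psi\in L^{12}_tL^{12/5}_x$.

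With these auxiliary bounds in hand, the critical estimate \eqref{nonlin1} follows by expanding $\mathcal{N}_m$ as a sum of cubic terms of the form $\psi\cdot\partial A$, $|\psi|^2\psi$ and $\psi\cdot R_lR_m(\psi^2)$, and quintic terms of the form $\psi A^2$ and $\psi\cdot\nabla^{-1}R(A\psi^2)$. Each cubic term is placed in $L^{4/3}_{x,t}=L^4_{x,t}\cdot L^2_{x,t}$ using the estimates of the previous paragraph, giving $a\cdot a^2=a^3$; each quintic term is placed in $L^{4/3}_{x,t}$ by $\|A\|_{L^4_{x,t}}^2\|\psi\|_{L^4_{x,t}}\lesssim a^5\leq a^3$ (and an analogous HLS factorization handles $\psi\nabla^{-1}(A\psi^2)$). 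The higher-regularity bound \eqref{nonlin2} follows by Littlewood--Paley decomposing and running the same H\"older scheme with one factor carrying the $X^{\sigma_0-1}$ norm, using the square-function characterization implicit in \eqref{ea2} and Bernstein to manage the paraproduct that arises when the high frequency sits inside a nonlocal $A$-factor (which is itself bilinear in $\psi$).

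Finally, the difference bounds \eqref{nonlin3}--\eqref{nonlin4} follow by exploiting the polynomial structure of $\mathcal{N}$: telescope each product so that exactly one factor is of the form $\psi'-\psi$, or of the form $A'-A$ (itself bilinear in such differences via the explicit formulas in \eqref{tot1}), while all remaining factors are bounded by $a$; this produces the required $a^2b$ control in the critical norm and its analogue after the Littlewood--Paley decomposition. The principal technical difficulty is the quintic cluster: the direct HLS bound only controls $A_m$ in $L^6_tL^3_x$, which is not strong enough in either time or space to feed a trilinear H\"older estimate into $L^{4/3}_{x,t}$, so one is forced to interpolate the two Strichartz norms to reach $L^8_tL^{8/3}_x$ for $\psi$ and thereby upgrade $A_m$ to $L^4_{x,t}$. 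This is precisely why the bootstrap assumption \eqref{tobootstrap} is phrased with both norms rather than one.
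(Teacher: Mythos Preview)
Your proof is correct and follows the same overall scheme as the paper's: bound the connection coefficients and the building blocks of $\mathcal{N}_m$ in well-chosen mixed norms, then close by H\"older against the two Strichartz norms in \eqref{tobootstrap}. The differences are in the specific exponents chosen and in the level of detail for the high-regularity part.

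\textbf{Choice of norms.} The paper places $A_m$ in $L^3_tL^6_x$ (via Sobolev from $q_{12}\in L^3_tL^{3/2}_x$, obtained by H\"older $L^4_{x,t}\cdot L^{12}_tL^{12/5}_x$) and then packages \emph{all} the coefficient blocks $A_0,\,A_l^2,\,\partial_mA_l,\,\psi_m\overline{\psi_l}$ uniformly into $L^{3/2}_tL^3_x+L^2_{x,t}$ (their Lemma on the $G$'s). A single H\"older against $\psi\in L^4_{x,t}\cap L^{12}_tL^{12/5}_x$ then lands everything in $L^1_tL^2_x+L^{4/3}_{x,t}$. Your route---interpolate to $\psi\in L^8_tL^{8/3}_x$ and upgrade $A_m$ to $L^4_{x,t}$---is a legitimate alternative; it trades the paper's uniform $L^{3/2}_tL^3_x$ container for a term-by-term placement in $L^{4/3}_{x,t}$ or $L^1_tL^2_x$.

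\textbf{One slip.} Your sentence handling the quartic tail of $A_0$ ``using $A_m\in L^6_tL^3_x$ together with $\psi\in L^{12}_tL^{12/5}_x$'' does not close: with those exponents the spatial H\"older indices for $A_m\psi_m\overline{\psi_l}$ sum to $1/3+5/12+5/12>1$. It \emph{does} close if you use your own $L^4_{x,t}$ bound for $A_m$ instead: then $A_m\psi_m\overline{\psi_l}\in L^{4/3}_{x,t}$, HLS gives $|\nabla|^{-1}(\cdot)\in L^{4/3}_tL^4_x$, and multiplying by $\psi\in L^4_{x,t}$ lands in $L^1_tL^2_x$. So the ingredient is already in your toolbox; just cite the right one.

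\textbf{High-regularity bounds.} For \eqref{nonlin2} and \eqref{nonlin4} your outline (``paraproduct, one factor carries $X^{\sigma_0-1}$, Bernstein for the nonlocal high-frequency case'') is correct in spirit but schematic. The paper does this carefully with frequency envelopes $a_k(\sigma),b_k(\sigma)$ and a simple bilinear Littlewood--Paley product lemma, obtaining localized bounds of the form $\|P_kG\|_{L^{3/2}_tL^3_x+L^2_{x,t}}\lesssim a\,2^{-\sigma k}a_k(\sigma)$ (and the analogous difference bound), which then sum in $\ell^2_k$ to give the $Y^{\sigma_0-1}$ estimate. Your approach would need the same bookkeeping for the high--high interactions inside $A_0$ and $\partial A$; nothing is wrong, but the details are where the work is.
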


The rest of this subsection is concerned with the proof of Proposition \ref{propnon}. The bounds \eqref{nonlin1} and \eqref{nonlin2} clearly follow from \eqref{nonlin3} and \eqref{nonlin4} with $\psi'_m=0$. For simplicity of notation, let $L^p_tL^q_x=L^p_tL^q_x(\R^2\times I)$ in the rest of the proof. To prove the bound \eqref{nonlin4} we work with frequency envelopes. For $\sigma=\sigma_0-1$ we define, for any $k\in\mathbb{Z}$, 
\begin{equation} \label{env}
a_k(\sigma) = \sum_{m=1}^2[\sup_{k' \in \Z} 2^{-|k-k'|/20}2^{\sigma k'}(\| P_{k'} \psi_m \|_{L^\infty_tL^2_x\cap L^4_{x,t}}+\| P_{k'} \psi'_m \|_{L^\infty_tL^2_x\cap L^4_{x,t}})],
\end{equation}
and
\begin{equation} \label{envdif}
b_k(\sigma) = \sum_{m=1}^2[\sup_{k' \in \Z} 2^{-|k-k'|/20}2^{\sigma k'}\| P_{k'} (\psi'_m-\psi_m) \|_{L^\infty_tL^2_x\cap L^4_{x,t}}].
\end{equation}
The envelope coefficients satisfy the inequalities
\begin{equation}\label{psienv2}
\sum_{k\in\Z}a_k(\sigma)^2 \lesssim \sum_{m=1}^2(\|\psi_m\|^2_{X^{\sigma}(I)}+\| \psi'_m\|^2_{X^{\sigma}(I)}),\qquad\sum_{k\in\Z}b_k(\sigma)^2 \lesssim \sum_{m=1}^2\|\psi'_m-\psi_m\|^2_{X^{\sigma}(I)},
\end{equation}
\begin{equation}\label{psienv3}
a_{k'}(\sigma) \leq 2^{|k-k'|/20} a_k(\sigma),\qquad b_{k'}(\sigma) \leq 2^{|k-k'|/20} b_k(\sigma),
\end{equation}
and
\begin{equation} \label{psienv}
\begin{split}
&\| P_k \psi_m \|_{L^\infty_tL^2_x\cap L^4_{x,t}}+\|P_k\psi'_m \|_{L^\infty_tL^2_x\cap L^4_{x,t}}\lesssim 2^{-\sigma k} a_k(\sigma),\\
&\| P_k(\psi'_m-\psi_m)\|_{L^\infty_tL^2_x\cap L^4_{x,t}}\lesssim 2^{-\sigma k} b_k(\sigma),
\end{split}
\end{equation}
for $m=1,2$ and $k,k'\in\mathbb{Z}$. The following simple lemma will be used several times in this section.

\begin{lemma}\label{al9}
Assume $f,g\in C(I:L^2)$ and $p_i,q_i\in[1,\infty]$, $i=1,2,3$, satisfy $1/p_1+1/p_2=1/p_3$ and $1/q_1+1/q_2=1/q_3$. For $k\in\Z$ let
\begin{equation*}
\rho_k=\sum_{|k'-k|\leq 20}\|P_{k'}f\|_{L^{p_1}_tL^{q_1}_x},\quad\nu_k=\sum_{|k'-k|\leq 20}\|P_{k'}g\|_{L^{p_2}_tL^{q_2}_x}.
\end{equation*}
Let
\begin{equation*}
\rho =\|f\|_{L^{p_1}_tL^{q_1}_x} , \qquad \nu = \|g\|_{L^{p_2}_tL^{q_2}_x}.
\end{equation*}
Then, for any $k \in \Z$, 
\begin{equation} \label{resaux}
\| P_k (fg) \|_{L^{p_3}_tL^{q_3}_x} \lesssim \rho\sum_{k'\geq k}\nu_{k'} + \nu\sum_{k' \geq k}\rho_{k'}.
\end{equation}
\end{lemma}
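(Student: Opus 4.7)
The plan is to prove \eqref{resaux} by a standard Littlewood-Paley trichotomy. Decomposing $f=\sum_{k_1}P_{k_1}f$ and $g=\sum_{k_2}P_{k_2}g$, I split $P_k(fg)$ into three paraproducts based on which of $k_1\le k_2-5$ (low-high, LH), $k_2\le k_1-5$ (high-low, HL), or $|k_1-k_2|\le 4$ (high-high, HH) holds. Inspecting the Fourier supports of $P_{k_1}f\cdot P_{k_2}g$, in the LH case we must have $|k_2-k|\le 2$; in HL, $|k_1-k|\le 2$; and in HH, $k_1\ge k-3$.

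For the LH piece, H\"older's inequality with the given dual exponents, combined with the fact that the low-pass projector $P_{\le k_2-5}$ is convolution in $x$ with a Schwartz function of uniformly bounded $L^1$ norm (so that $\|P_{\le k_2-5}f\|_{L^{p_1}_tL^{q_1}_x}\lesssim\rho$ by Young's inequality), yields
\[
\|P_k(\mathrm{LH})\|_{L^{p_3}_tL^{q_3}_x}\lesssim \rho\sum_{|k_2-k|\le 2}\|P_{k_2}g\|_{L^{p_2}_tL^{q_2}_x}\lesssim \rho\,\nu_k\le \rho\sum_{k'\ge k}\nu_{k'}.
\]
The HL piece is symmetric and contributes the second term $\nu\sum_{k'\ge k}\rho_{k'}$. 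For the HH piece, H\"older and the same uniform kernel bound $\|P_{k_1}f\|_{L^{p_1}_tL^{q_1}_x}\lesssim\rho$ give
\[
\|P_k(\mathrm{HH})\|_{L^{p_3}_tL^{q_3}_x}\lesssim \rho\sum_{k_1\ge k-3}\sum_{|k_2-k_1|\le 4}\|P_{k_2}g\|_{L^{p_2}_tL^{q_2}_x}\lesssim \rho\sum_{k_2\ge k-7}\|P_{k_2}g\|_{L^{p_2}_tL^{q_2}_x},
\]
after interchanging the order of summation (each fixed $k_2$ is hit at most $9$ times). Since the $20$-wide fattening built into $\nu_{k'}$ captures every term $\|P_{k_2}g\|_{L^{p_2}_tL^{q_2}_x}$ with $k_2\ge k-7$ via the index $k'=k_2+20\ge k+13\ge k$, this last sum is dominated by $\sum_{k'\ge k}\nu_{k'}$, concluding the HH estimate.

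There is no real obstacle beyond the usual paraproduct bookkeeping; the only delicate point is the reliance on $L^1$-boundedness of the Littlewood-Paley convolution kernels (via Young's inequality) in place of the Littlewood-Paley square function, which is what allows the hypothesis to include the endpoint Lebesgue exponents $1$ and $\infty$.
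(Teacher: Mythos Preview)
Your proof is correct and follows essentially the same approach as the paper: a standard paraproduct trichotomy followed by H\"older's inequality and the uniform $L^1$-boundedness of the Littlewood--Paley kernels. The paper organizes the decomposition slightly differently (around the output frequency $k$, writing $P_k(fg)=P_k(P_{\leq k-4}f\cdot P_{[k-3,k+3]}g)+P_k(P_{[k-3,k+3]}f\cdot P_{\leq k-4}g)+\sum_{k_1,k_2\geq k-3,\,|k_1-k_2|\leq 8}P_k(P_{k_1}f\cdot P_{k_2}g)$ and then invoking $\|P_{\leq j}f\|\lesssim\rho$, $\|P_{\leq j}g\|\lesssim\nu$), but the content is the same and your more detailed bookkeeping is fine.
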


\begin{proof} [Proof of Lemma \ref{al9}] We decompose
\begin{equation}\label{mf60}
\begin{split}
P_k(fg)&= P_k (P_{\leq k-4}f \cdot P_{[k-3,k+3]} g) + P_k (P_{[k-3,k+3]} f \cdot P_{\leq k-4} g)\\
&+\sum_{k_1,k_2\geq k-3,|k_1-k_2|\leq 8}  P_k(P_{k_1} f \cdot P_{k_2} g),
\end{split}
\end{equation}
where, for any interval $J\subseteq\R$, $P_J=\sum_{j\in J}P_j$ and $P_{\leq j}=P_{(-\infty,j]}$. The bound \eqref{resaux} follows since $\|P_{\leq j}f\|_{L^{p_1}_tL^{q_1}_x}\lesssim \rho$ and $\|P_{\leq j}g\|_{L^{p_2}_tL^{q_2}_x}\lesssim \nu$ for any $j\in\mathbb{Z}$.
\end{proof}

We analyze now the coefficients of $\psi_m$ in the nonlinearities $\mathcal{N}_m$. Recall the formulas, see Proposition \ref{summary},
\begin{equation}\label{t52}
\begin{split}
&q_{12}=\mu\Im(\psi_1\overline{\psi_2}),\qquad A_2=-\nabla^{-1}R_1(q_{12}),\qquad A_1=\nabla^{-1}R_2(q_{12}),\\
&A_0=\mu\sum_{m,l=1}^2 \ep^{m+1}\big[R_lR_m\big(\Re(\overline{\psi_l}\psi_m)\big)+2|\nabla|^{-1} R_l \Im( A_m \psi_m \overline{\psi_l})\big]+\frac{\mu}{2}\sum_{m=1}^2\ep^{m+1}|\psi_m|^2.
\end{split}
\end{equation}

\begin{lemma}\label{al10}
Assume $\psi_m,\psi'_m$, $m=1,2$, are as in Proposition \ref{propnon}, and define $A_m,A'_m$, $m=0,1,2$, as in \eqref{t52}. Assume 
\begin{equation*}
(G,G')\in\{(A_0,A'_0),(A_l^2,{A'_l}^2),(\partial_m A_l,\partial_mA'_l),(\psi_m \overline{\psi_l},\psi'_m\overline{\psi'_l}):m,l=1,2\}.
\end{equation*}
Then
\begin{equation} \label{fg0}
\| G \|_{L^{3/2}_t L^3_x+L^2_{x,t}}+\| G' \|_{L^{3/2}_t L^3_x+L^2_{x,t}} \lesssim a^2.
\end{equation}
and, for any $k\in\Z$,
\begin{equation} \label{fgs}
\| P_k G \|_{L^{3/2}_t L^3_x+L^2_{x,t}}+\| P_k G' \|_{L^{3/2}_t L^3_x+L^2_{x,t}} \lesssim a2^{-\sigma k}a_k(\sigma),\qquad\sigma =\sigma_0-1.
\end{equation}
Moreover,
\begin{equation} \label{fgn}
\|G'-G\|_{L^{3/2}_t L^3_x+L^2_{x,t}} \lesssim ab,
\end{equation}
and, for any $k\in\Z$,
\begin{equation} \label{fgn2}
\|P_k(G'-G)\|_{L^{3/2}_t L^3_x+L^2_{x,t}} \lesssim a2^{-\sigma k}b_k(\sigma)+b2^{-\sigma k}a_k(\sigma).
\end{equation}
\end{lemma}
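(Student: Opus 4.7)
The strategy is to proceed term by term through the four families of $G$'s, reducing each bound to H\"older in mixed Lebesgue norms, boundedness of the Riesz transforms $R_j$ on $L^p$ for $1<p<\infty$, and the two-dimensional Hardy--Littlewood--Sobolev estimate $|\nabla|^{-1}:L^{s_0}_x\to L^s_x$ when $\tfrac{1}{s}=\tfrac{1}{s_0}-\tfrac{1}{2}$, $1<s_0<2$. The hypothesis \eqref{tobootstrap} controls $\psi_m$ in $L^4_{x,t}\cap L^{12}_tL^{12/5}_x$; since the Strichartz pairs $(\infty,2),(12,12/5),(8,8/3),(6,3),(4,4)$ all lie on the line $\tfrac{1}{p}+\tfrac{1}{q}=\tfrac{1}{2}$, complex interpolation upgrades \eqref{tobootstrap} to $\|\psi_m\|_{L^p_tL^q_x}\lesssim a$ along this entire segment, and the same interpolation, together with the definition of $a_k(\sigma)$, gives frequency-envelope analogues of the form $\|P_{k'}\psi_m\|_{L^p_tL^q_x}\lesssim 2^{|k-k'|/20}2^{-\sigma k'}a_k(\sigma)$, with parallel bounds for $\psi'_m$ and $\psi'_m-\psi_m$ in terms of $b_k(\sigma)$.

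The substantive preliminary step is an auxiliary bound $\|A_l\|_{L^4_{x,t}}+\|A_l\|_{L^6_tL^3_x}\lesssim a^2$ for $l=1,2$, together with its frequency-envelope and difference versions. Since $A_l=\pm\nabla^{-1}R_j(q_{12})$ with $q_{12}=\mu\Im(\psi_1\overline{\psi_2})$, HLS yields $\|A_l\|_{L^r_tL^s_x}\lesssim\|q_{12}\|_{L^r_tL^{s_0}_x}$, and H\"older places $q_{12}$ into $L^4_tL^{4/3}_x$ (via $\psi_i\in L^8_tL^{8/3}_x$) or $L^6_tL^{6/5}_x$ (via $\psi_i\in L^{12}_tL^{12/5}_x$). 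The frequency-envelope version is obtained by applying Lemma~\ref{al9} to the factorisation of $q_{12}$ in those same Strichartz pairs and absorbing the extra factor $2^{-k'}$ inside the sum over $k'\geq k$ to reflect $\|P_{k'}A_l\|_{L^r_tL^s_x}\lesssim 2^{-k'}\|P_{k'}q_{12}\|_{L^r_tL^s_x}$.

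With these preliminaries in hand the bound \eqref{fg0} is immediate case by case: $\|\psi_m\overline{\psi_l}\|_{L^2_{x,t}}\lesssim a^2$ by H\"older; $\partial_m A_l=\pm R_jR_l(q_{12})$ gives $L^2_{x,t}$-bound $\|q_{12}\|_{L^2_{x,t}}\lesssim a^2$; $\|A_l^2\|_{L^2_{x,t}}\leq\|A_l\|^2_{L^4_{x,t}}\lesssim a^4\leq a^2$; and for $A_0$ the quadratic Riesz pieces $R_lR_m\Re(\overline{\psi_l}\psi_m)$ and $\tfrac{1}{2}|\psi_m|^2$ sit in $L^2_{x,t}$ with bound $a^2$, while the cubic piece $|\nabla|^{-1}R_l\Im(A_m\psi_m\overline{\psi_l})$ lands in $L^{3/2}_tL^3_x$ by bounding its argument in $L^{3/2}_tL^{6/5}_x$ via $\|A_m\|_{L^6_tL^3_x}\|\psi_m\|_{L^4_{x,t}}\|\psi_l\|_{L^4_{x,t}}\lesssim a^4\leq a^2$ and then applying HLS. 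The frequency-localised bound \eqref{fgs} and the difference bounds \eqref{fgn}, \eqref{fgn2} follow by running the same arithmetic with Lemma~\ref{al9} inserted at every product step, the $2^{-k'}$ shift absorbed at each appearance of $\nabla^{-1}$ or $|\nabla|^{-1}$, and product-rule telescoping for differences such as $\psi'_m\overline{\psi'_l}-\psi_m\overline{\psi_l}=(\psi'_m-\psi_m)\overline{\psi'_l}+\psi_m\overline{(\psi'_l-\psi_l)}$, plus a three-term telescope for the cubic piece of $A'_0-A_0$. The main obstacle is precisely this cubic term, whose envelope bound requires chaining Lemma~\ref{al9} twice through a triple product whose $A_m$-factor is itself nonlinear in $\psi$; one must first install envelope bounds for $A_m$ in $L^6_tL^3_x$ and $L^4_{x,t}$ and then distribute the Strichartz norms carefully across the three factors, after which the resulting sums over $k'\geq k$ converge because $\sigma=\sigma_0-1\geq 9$.
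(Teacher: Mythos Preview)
Your argument is correct and follows essentially the same route as the paper: H\"older plus $L^p$-boundedness of Riesz transforms for the quadratic pieces, Hardy--Littlewood--Sobolev for terms carrying $\nabla^{-1}$, telescoping for differences, and Lemma~\ref{al9} for the frequency-localised estimates; the paper also notes at the outset that \eqref{fg0} and \eqref{fgs} reduce to \eqref{fgn} and \eqref{fgn2} upon setting $\psi'_m=0$. The only cosmetic differences are that the paper places $A_l$ in $L^3_tL^6_x$ (rather than your $L^4_{x,t}\cap L^6_tL^3_x$), so that both $A_l^2$ and the cubic contribution to $A_0$ land in the $L^{3/2}_tL^3_x$ summand, and that your interpolation from \eqref{tobootstrap} reaches only the segment between $(4,4)$ and $(12,12/5)$, not $(\infty,2)$---harmless here, since you never use that endpoint for the $a$-bounds and the envelopes $a_k(\sigma),b_k(\sigma)$ are defined with $L^\infty_tL^2_x$ built in.
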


\begin{proof} [Proof of Lemma \ref{al10}] We observe that the bounds \eqref{fg0} and \eqref{fgs} are implied by \eqref{fgn} and \eqref{fgn2} respectively. We prove first the bound \eqref{fgn}. Using the boundedness of the Riesz transforms on $L^p(\R^2)$, $p\in(1,\infty)$, it is clear that
\begin{equation*}
\|G'-G\|_{L^2_{x,t}} \lesssim a\sum_{m=1}^2\|\psi'_m-\psi_m\|_{L^4_{x,t}}
\end{equation*}
if $(G,G')\in\{(\partial_m A_l,\partial_mA'_l),(\psi_m \overline{\psi_l},\psi'_m\overline{\psi'_l}):m,l=1,2\}$. In addition, using the Sobolev embedding it follows that
\begin{equation}\label{al12}
\sum_{m=1}^2\|A'_m-A_m\|_{L^3_t L^6_x} \lesssim \| q_{12}-q'_{12}\|_{L^3_{t} L^{3/2}_x}\lesssim a\sum_{m=1}^2\|\psi'_m-\psi_m\|_{L^4_{x,t}}.
\end{equation}
In particular, 
\begin{equation}\label{al12.1}
\sum_{m=1}^2[\|A_m\|_{L^3_t L^6_x}+\|A'_m\|_{L^3_t L^6_x}]\lesssim a^2,
\end{equation}
thus
\begin{equation*}
\sum_{l=1}^2\|{A'_l}^2-A_l^2\|_{L^{3/2}_tL^3_x}\lesssim a^3\sum_{m=1}^2\|\psi'_m-\psi_m\|_{L^4_{x,t}},
\end{equation*}
as desired. Finally, assume that $(G,G')=(A_0,A'_0)$. For the quadratic terms in the right-hand side of \eqref{t52} we use the bound \eqref{fgn} for $(G,G')=(\psi_m \overline{\psi_l},\psi'_m\overline{\psi'_l})$, $m,l \in \{1,2\}$, which is already proved, and the boundedness of the Riesz transforms. For the remaining cubic terms we estimate using \eqref{al12}, for $m,l=1,2$,
\begin{equation*}
\begin{split}
\|\nabla^{-1} R_l \Im(A'_m \psi'_m &\overline{\psi'_l}-A_m \psi_m \overline{\psi_l}) \|_{L^{3/2}_t L^3_x} \lesssim\| A'_m \psi'_m \overline{\psi'_l}- A_m \psi_m \overline{\psi_l} \|_{L^{3/2}_t L^{6/5}_x} \\
& \lesssim \|A'_m- A_m \|_{L^3_t L^6_x} \|\psi_m \overline{\psi_l}\|_{L^3_t L^{3/2}_x}+\|A'_m\|_{L^3_tL^6_x}\|\psi'_m \overline{\psi'_l}-\psi_m \overline{\psi_l}\|_{L^3_t L^{3/2}_x}\\
&\lesssim a^3b.
\end{split}
\end{equation*}
This completes the proof  of the bounds \eqref{fgn} and \eqref{fg0}.

It remains to prove the bound \eqref{fgn2}. If $(G,G') \in \{(\partial_m A_l,\partial_mA'_l),(\psi_m \overline{\psi_l},\psi'_m\overline{\psi'_l}):m,l=1,2\}$ then, using \eqref{resaux} and \eqref{psienv2}--\eqref{psienv}, we estimate
\begin{equation*}
\|P_k(G'-G)\|_{L^2_{x,t}}\lesssim a\sum_{k'\geq k}2^{-\sigma k'} b_{k'}(\sigma)+b\sum_{k'\geq k}2^{-\sigma k'}a_{k'}(\sigma) \lesssim a 2^{-\sigma k} b_{k}(\sigma)+b2^{-\sigma k} a_{k}(\sigma),
\end{equation*}
as desired. Using again the bounds \eqref{resaux} and \eqref{psienv2}--\eqref{psienv} we estimate, for $m=1,2$ and $k\in\mathbb{Z}$,
\begin{equation}\label{al12.9}
\begin{split}
\| P_k (A'_m-A_m) \|_{L^3_t L^6_x}&\lesssim\| P_k (\psi'_1 \overline{\psi'_2}-\psi_1 \overline{\psi_2}) \|_{L^3_{t} L^{3/2}_x}\\
&\lesssim a\sum_{k' \geq k} 2^{-\sigma k'} b_{k'}(\sigma)+b\sum_{k' \geq k} 2^{-\sigma k'} a_{k'}(\sigma)\\
&\lesssim a 2^{-\sigma k} b_{k}(\sigma)+b2^{-\sigma k} a_{k}(\sigma).
\end{split}
\end{equation}
In particular,
\begin{equation}\label{al12.8}
\| P_k (A_m)\|_{L^3_t L^6_x}+\| P_k (A'_m)\|_{L^3_t L^6_x}\lesssim a 2^{-\sigma k} a_{k}(\sigma).
\end{equation}
Recall also the bounds \eqref{al12} and \eqref{al12.1}. Using again \eqref{resaux} and \eqref{psienv2}--\eqref{psienv},
\[
\| P_k({A'_m}^2-A_m^2)\|_{L^{3/2}_t L^3_x}\lesssim a^2b2^{-\sigma k} a_{k}(\sigma)+a^32^{-\sigma k}b_k(\sigma),
\]
as desired. Finally, assume $(G,G')=(A_0,A'_0)$. For the quadratic terms in the right-hand side of \eqref{t52} we use the bound \eqref{fgn2} for $(G,G')=(\psi_m \overline{\psi_l},\psi'_m\overline{\psi'_l})$, which was proved earlier. For the remaining cubic terms we use first \eqref{psienv2}--\eqref{psienv} and \eqref{resaux} to conclude that
\[
\|  P_k (\psi'_m\overline{\psi'_l}-\psi_m \overline{\psi_l}) \|_{L^3_t L^{3/2}_x} \lesssim b2^{-\sigma k} a_k(\sigma)+a2^{-\sigma k}b_k(\sigma),
\]
for $k\in\Z$ and $m,l=1,2$. In particular
\begin{equation*}
\|  P_k (\psi'_m\overline{\psi'_l})\|_{L^3_tL^{3/2}_x}+\|P_k(\psi_m \overline{\psi_l}) \|_{L^3_t L^{3/2}_x} \lesssim a2^{-\sigma k} a_k(\sigma).
\end{equation*}
Also,
\begin{equation*}
\|\psi'_m\overline{\psi'_l}-\psi_m \overline{\psi_l}\|_{L^3_t L^{3/2}_x}\lesssim ab,\qquad \|\psi'_m\overline{\psi'_l}\|_{L^3_tL^{3/2}_x}+\|\psi_m \overline{\psi_l}\|_{L^3_t L^{3/2}_x}\lesssim a^2.
\end{equation*}
Recall also the bounds \eqref{al12}, \eqref{al12.1}, \eqref{al12.9}, \eqref{al12.8} for the coefficients $A_m$, $m=1,2$,
\begin{equation*}
\begin{split}
&\|A'_m-A_m\|_{L^3_tL^6_x}\lesssim ab,\qquad \|A_m\|_{L^3_tL^6_x}+\|A'_m\|_{L^3_tL^6_x}\lesssim a^2,\\
&\| P_k (A'_m-A_m) \|_{L^3_t L^6_x}\lesssim a 2^{-\sigma k} b_{k}(\sigma)+b2^{-\sigma k} a_{k}(\sigma),\\
&\| P_k (A_m)\|_{L^3_t L^6_x}+\| P_k (A'_m)\|_{L^3_t L^6_x}\lesssim a 2^{-\sigma k} a_{k}(\sigma).
\end{split}
\end{equation*}
Combining these bounds with \eqref{resaux} leads to
\begin{equation*}
\begin{split}
\| &P_k[\nabla^{-1} R_l \Im( A'_m \psi'_m \overline{\psi'_l})-\nabla^{-1} R_l \Im( A_m \psi_m \overline{\psi_l})]\|_{L^{3/2}_t L^3_x} \\
&\lesssim \|  P_k(A'_m \psi'_m \overline{\psi'_l}- A_m \psi_m \overline{\psi_l}) \|_{L^{3/2}_t L^{6/5}_x}\\
&\lesssim a^3 2^{-\sigma k} b_k(\sigma)+a^2b2^{-\sigma k} a_k(\sigma).
\end{split}
\end{equation*}
This completes the proof of the lemma.
\end{proof}

We complete now the proof of Proposition \ref{propnon}. Recall the formula
\begin{equation*}
\mathcal{N}_m =-iA_0\psi_m+\sum_{l=1}^2\ep^{l+1}\big[\psi_l(\partial_lA_m+\partial_mA_l)+\psi_m(-\partial_lA_l+iA_l^2)\big].
\end{equation*}
The bound \eqref{nonlin3} follows from this formula and the bounds \eqref{fg0} and \eqref{fgn}. The bound \eqref{nonlin1} follows from \eqref{nonlin3} with $\psi'_m=0$.

To prove \eqref{nonlin4} we use the bounds \eqref{fg0}--\eqref{fgn2}, as well as the bounds
\begin{equation*}
\begin{split}
&\|\psi_m\|_{L^4_{x,t}}+\|\psi'_m\|_{L^4_{x,t}}\lesssim a,\qquad \|\psi'_m-\psi_m\|_{L^4_{x,t}}\lesssim b,\\
&\|P_k(\psi_m)\|_{L^4_{x,t}}+\|P_k(\psi'_m)\|_{L^4_{x,t}}\lesssim 2^{-\sigma k}a_k(\sigma),\qquad \|P_k(\psi'_m-\psi_m)\|_{L^4_{x,t}}\lesssim 2^{-\sigma k}b_k(\sigma),
\end{split}
\end{equation*}
for $m=1,2$. Using \eqref{resaux} it follows that for any $k\in\mathbb{Z}$
\begin{equation*}
\sum_{m=1}^2\|P_k(\mathcal{N}'_m-\mathcal{N}_m)\|_{L^1_tL^2_x+L^{4/3}_{x,t}}\lesssim ab2^{-\sigma k}a_k(\sigma)+a^22^{-\sigma k}b_k(\sigma).
\end{equation*}
Thus, using \eqref{psienv2}
\begin{equation*}
\begin{split}
\sum_{k\in\mathbb{Z}}2^{2\sigma k}&\|P_k(\mathcal{N}'_m-\mathcal{N}_m)\|^2_{L^1_tL^2_x+L^{4/3}_{x,t}}\\
&\lesssim a^4\sum_{m=1}^2\|\psi'_m-\psi_m\|_{X^\sigma(I)}^2+a^2b^2\sum_{m=1}^2(\|\psi'_m\|_{X^\sigma(I)}^2+\|\psi_m\|_{X^\sigma(I)}^2).
\end{split}
\end{equation*}
Using the second inequality in \eqref{ea2} and \eqref{nonlin3}
\begin{equation*}
\sum_{k\in\mathbb{Z}}\|P_k(\mathcal{N}'_m-\mathcal{N}_m)\|^2_{L^1_tL^2_x+L^{4/3}_{x,t}}\lesssim a^4\sum_{m=1}^2\|\psi'_m-\psi_m\|_{L^4_{x,t}\cap L^{12}_tL^{12/5}_x}^2. 
\end{equation*}
The bound \eqref{nonlin4} follows from the last two estimates and the first bound in \eqref{ea2}. This completes the proof of Proposition \ref{propnon}.

\subsection{Proof of Proposition \ref{regular} and Proposition \ref{wellpo}}

\begin{proof}[Proof of Proposition \ref{regular} (a)] Given $(\phi_1,\phi_2) \in H^{\sigma_0-1} \times H^{\sigma_0-1}$, it follows from \eqref{listr2} that for any $\va>0$ there is $T_\va=T_\va(\|\phi_1\|_{H^{\sigma_0-1}}+\|\phi_2\|_{H^{\sigma_0-1}}) > 0$ such that
\[
\| e^{it (\partial_1^2 + \va \partial_2^2)} \phi_i \|_{(L^4_{x,t}\cap L^{12}_tL^{12/5}_x)(\R^2\times I_\va)} \leq \va, \qquad i = 1,2, 
\]
where $I_\va=(-T_\va,T_\va)$. A standard fixed-point argument, combining the linear estimates \eqref{listr2} and the nonlinear estimates in Proposition \ref{propnon}, shows that there is $\va>0$ sufficiently small and a unique solution $(\psi_1,\psi_2)\in X^{\sigma_0-1}(I_\va)$ of the system \eqref{tot01}-\eqref{tot1}.

In addition, it is easy to combine the nonlinear estimate \eqref{nonlin3} and the linear estimate \eqref{listr} to prove the following uniqueness statement: assume $I\subseteq\R$ is an open interval and $\psi=(\psi_1,\psi_2),\psi'=(\psi'_1,\psi'_2)\in C(I:H^4)\times C(I:H^4)$ are solutions of the equations
\begin{equation*}
i\partial_t\psi_m+(\partial_1^2+\ep\partial_2^2)\psi_m=i\mathcal{N}_m,\qquad i\partial_t\psi'_m+(\partial_1^2+\ep\partial_2^2)\psi'_m=i\mathcal{N}'_m,
\end{equation*}
on $\R^2\times I$ for $m=1,2$, where $\mathcal{N}_m,\mathcal{N}'_m$ are defined as in \eqref{tot1}. If, in addition, $\psi(t_0)=\psi'(t_0)$ for some $t_0\in I$ then $\psi=\psi'$ on $I$.

The existence and uniqueness of the maximal extension $(I(\phi),\psi)$ follows by a simple argument using Zorn's lemma.
\end{proof}

\begin{proof}[Proof of Proposition \ref{regular} (b)] It is enough to prove the claim for $I_{+}(\phi)$. We do this by contradiction. Assume that $I_+(\phi)=[0,T_+)$ is bounded and
\begin{equation}\label{nj1}
\sum_{m=1}^2\|f_m\|_{L^2_x}+\sum_{m=1}^2 \| \psi_m \|_{L^4_{x,t}(\R^2 \times I_+(\phi))}\leq A\in[1,\infty).
\end{equation}
It follows from \eqref{tot01} and \eqref{tot1} that
\begin{equation*}
\partial_t(\psi_m\overline{\psi_m})=i\overline{\psi_m}(\partial_1^2+\ep\partial_2^2)\psi_m-i\psi_m(\partial_1^2+\ep\partial_2^2)\overline{\psi_m}+\mathcal{N}_m\overline{\psi_m}+\overline{\mathcal{N}_m}\psi_m.
\end{equation*}
Thus
\begin{equation*}
\begin{split}
\frac{d}{dt}&\int_{\R^2}|\psi_m|^2\,dx=\int_{\R^2}(\mathcal{N}_m\overline{\psi_m}+\overline{\mathcal{N}_m}\psi_m)\,dx\\
&=\int_{\R^2}\big[-2\psi_m\overline{\psi_m}(\partial_1A_1+\ep\partial_2A_2)+2\sum_{l=1}^2\ep^{l+1}(\partial_lA_m+\partial_mA_l)\Re(\psi_m\overline{\psi_l})\big]\,dx.
\end{split}
\end{equation*}
Using \eqref{nj1} and the definition of $A_1,A_2$, it follows that
\begin{equation}\label{nj2}
\sum_{m=1}^2 \| \psi_m \|_{(L^\infty_tL^2_x\cap L^4_{x,t})(\R^2 \times I_+(\phi))}\lesssim A^2.
\end{equation}
Thus, for any $\va>0$ there is $M=M(\va,A)$ and a partition $I_+(\phi)=I_1\cup\ldots\cup I_M$, $I_l=[T_{l-1},T_l)$, $T_0=0$, $T_M=T_+$, with the property that
\begin{equation}\label{nj3}
\sum_{m=1}^2 \| \psi_m \|_{(L^{12}_tL^{12/5}_x\cap L^4_{x,t})(\R^2 \times I_l)}\leq \va,\qquad l=1,\ldots,M.
\end{equation}
The nonlinear bound \eqref{nonlin2} and the second linear bound in \eqref{listr2} show that, for any $l=1,\ldots,M$ 
\begin{equation*}
\sup_{t\in I_l}\sum_{m=1}^2 \| \psi_m(t) \|_{H^{\sigma_0-1}}\lesssim \sum_{m=1}^2 \| \psi_m(T_{l-1})\|_{H^{\sigma_0-1}}.
\end{equation*}
As a consequence
\begin{equation}\label{nj4}
\sup_{t\in I_+(\phi)}\sum_{m=1}^2 \| \psi_m(t) \|_{H^{\sigma_0-1}}\lesssim_A \sum_{m=1}^2 \|f_m\|_{H^{\sigma_0-1}}.
\end{equation}
Moreover, the functions $t\to(\psi_1(t),\psi_2(t))$ converge in $H^{\sigma_0-1}\times H^{\sigma_0-1}$ as $t\to T_+$, in contradiction with the maximality of $I_+(\phi)$.
\end{proof}

\begin{proof}[Proof of Proposition \ref{regular} (c)] We need to prove that $\DD_m$ are covariant derivatives in the sense of \eqref{ea9} assuming that $\psi_m$, $m=1,2$ and $A_m$, $m=0,1,2$ satisfy the identities in Proposition \ref{regular} (a), $\DD_1 \psi_2 = \DD_2 \psi_1$ at $t=0$, and 
\begin{equation*}
\psi_0=i(\DD_1\psi_1+\ep\DD_2\psi_2)+2A_1\psi_1+2\ep A_2\psi_2.
\end{equation*}
From \eqref{tot1} it follows that
\begin{equation} \label{cd1}
[\DD_1,\DD_2] = i(\partial_1 A_2 - \partial_2 A_1) =i\mu\Im(\psi_1\overline{\psi_2})
\end{equation}
and this is the only covariant property which can be derived directly.

We define, for $m=1,2$,
\[
F=\DD_1 \psi_2 - \DD_2 \psi_1, \qquad H_m = \DD_m \psi_0 - \DD_0 \psi_m, \qquad G_m = \partial_mA_0-\partial_0A_m- q_{m0},
\]
where $q_{m0}=\mu\Im(\overline{\psi_0}\psi_m)$. The idea is to write an equation for the evolution in time for $F$ which allows us, under suitable conditions on the coefficients on some time interval $I \ni 0$, to prove that $F(0)=0$ implies $F(t)=0$ for all $t \in I$. Such a computation involves $H_m, G_m, m=1,2$, therefore we start by connecting these expressions to $F$. 

The Schr\"odinger equation in \eqref{regular} was derived starting from exploiting the fact that $H_m=0$, see \eqref{covaux}. We can redo the computations in \eqref{covaux}-\eqref{iscoord} assuming only \eqref{cd1} and the identities in Proposition \ref{regular} (a); the result is
\begin{equation*}
H_1 = i \ep(\partial_2-iA_2)F, \qquad  H_2 = - i(\partial_1-iA_1)F.
\end{equation*}

Next we want to relate $G_m, m=1,2$ to $F$. Undoing the computation that derived \eqref{a0f} from \eqref{da0}, and taking into account that \eqref{cd1} holds, and $\DD_1 \psi_2 - \DD_2 \psi_1 =F$, we derive first
\begin{equation*}
\Delta A_0 = \mu\sum_{l=1}^2 \partial_l \Im(\psi_l \overline{\psi_0}) - \mu \partial_2 \Re{(\psi_1 \overline{F})} + \mu \va \partial_1 \Re{(\psi_2 \overline{F})}.
\end{equation*}
Then we continue with
\begin{equation*}
\begin{split}
&\Delta G_1 = \Delta (\partial_1 A_0 - \partial_0A_1 - \mu \Im{(\psi_1 \overline{\psi_0})}) \\
&= \mu  \partial_1 \partial_2 \Im(\psi_2 \overline{\psi_0}) - \mu \partial_1 \partial_2 \Re{(\psi_1 \overline{F})} + \mu \ep \partial_1^2 \Re{(\psi_2 \overline{F})} +  \mu \partial_2 \partial_0 \Im{(\psi_1 \overline{\psi_2})} - \mu \partial_2^2 \Im{(\psi_1 \overline{\psi_0})} \\
&= - \mu \partial_1 \partial_2 \Re{(\psi_1 \overline{F})} + \mu \ep \partial_1^2 \Re{(\psi_2 \overline{F})} + \mu \partial_2 [\partial_1\Im(\psi_2 \overline{\psi_0})+ \partial_0  \Im{(\psi_1 \overline{\psi_2})} - \partial_2 \Im{(\psi_1 \overline{\psi_0})}] \\
&= -\mu \partial_1 \partial_2 \Re{(\psi_1 \overline{F})} + \mu \ep \partial_1^2 \Re{(\psi_2 \overline{F})} + \mu \partial_2  I.
\end{split}
\end{equation*}
Based on the formulas derived above for $H_1,H_2$, we compute $I$ separately,
\begin{equation*}
\begin{split}
I&= \Im(\DD_1 \psi_2 \overline{\psi_0}+\psi_2 \overline{\DD_1 \psi_0}+\DD_0 \psi_1 \overline{\psi_2}+\psi_1 \overline{\DD_0 \psi_2} -\DD_2 \psi_1 \overline{\psi_0} -\psi_1 \overline{\DD_2 \psi_0})\\
&= \Im{(F \overline{\psi_0})} - \Im{(\overline{\psi_2} H_1)} + \Im{(\overline{\psi_1} H_2)} \\
&= -\partial_1 \Re{(F \cdot \overline{\psi_1})} - \ep \partial_2 \Re{(F \cdot \overline{\psi_2})}.
\end{split}
\end{equation*}
Thus
\[
\Delta G_1 = \mu \ep (\partial_1^2 - \partial_2^2) \Re{(F \cdot \overline{\psi_2})}-2\mu\partial_1\partial_2\Re{(F \cdot \overline{\psi_1})},
\]
which gives
\[
G_1 = -\mu \ep(R_1^2 - R_2^2)( \Re{(F\overline{\psi_2})})+2\mu R_1R_2(\Re{(F \overline{\psi_1})}).
\]
In a similar manner one obtains
\[
G_2 =  -\mu(R_1^2 - R_2^2)( \Re{(F\overline{\psi_1})})-2\mu\ep R_1R_2(\Re{(F \overline{\psi_2})}).
\]
In particular, for any $t\in I$,
\begin{equation}\label{ea100}
\|G_1\|_{L^2_x}+\|G_2\|_{L^2_x}\lesssim \|F\|_{L^2_x}(\|\psi_1\|_{L^\infty_x}+\|\psi_2\|_{L^\infty_x}).
\end{equation}

We derive now an evolution equation for $F$. We begin with rewriting the evolution equation for each $\psi_m$ as follows
\begin{equation*}
\DD_0 \psi_m = \DD_m \psi_0 - H_m = \DD_m (i\DD_1  \psi_1+2A_1\psi_1 +i\ep\DD_2 \psi_2+2\ep A_2\psi_2) - H_m.
\end{equation*}
From this it follows
\begin{equation*}
\begin{split}
&\DD_0 F +iG_1 \psi_2-iG_2 \psi_1\\
&= \DD_0 (\DD_1 \psi_2 - \DD_2 \psi_1)+iG_1 \psi_2-iG_2 \psi_1 \\
&= \DD_1 \DD_0 \psi_2 - \DD_2 \DD_0 \psi_1 + i q_{01} \psi_2 - i q_{02} \psi_1 \\
&=\DD_1 \DD_2 \psi_0 - \DD_1 H_2 - \DD_2 \DD_1 \psi_0 + \DD_2 H_1 + i q_{01} \psi_2 - i q_{02} \psi_1 \\
&= i(\partial_1^2+\ep\partial_2^2)F  + (\partial_1A_1+iA_1^2+\ep\partial_2A_2+i\ep A_2^2)F+i q_{12} \psi_0 + i q_{01} \psi_2 - i q_{02} \psi_1 \\
&= i(\partial_1^2+\ep\partial_2^2)F  + (\partial_1A_1+iA_1^2+\ep\partial_2A_2+i\ep A_2^2)F.
\end{split}
\end{equation*}
We multiply the equation by $\overline{F}$, integrate over $\R^2$, and take the real part to obtain
\begin{equation*}
\begin{split}
\frac{1}{2}\frac{d}{dt} \int |F|^2 dx &= \int \Re[(-iG_1 \psi_2 +iG_2 \psi_1)\overline{F}]+(\partial_1A_1+\ep\partial_2A_2)F\overline{F}\,dx \\
& \lesssim ( \| \psi_1 \|^2_{L^\infty} +  \| \psi_2 \|^2_{L^\infty}+ \|\partial_1A_1 \|_{L^\infty} +  \|\partial_2A_2\|_{L^\infty}) \|F\|_{L^2}^2,
\end{split}
\end{equation*}
where in the last line we have used \eqref{ea100}. Since $F(0)=0$, it follows that $F(t)=0$ for all $t \in I$. As a consequence $H_m=G_m=0, m=1,2$, hence the full covariant calculus is preserved.
\end{proof}

\begin{proof}[Proof of Proposition \ref{wellpo}] As in the proof of Proposition \ref{regular} (b), see the proof of \eqref{nj2}, we have
\begin{equation*}
\sum_{m=1}^2 \| \psi_m \|_{(L^\infty_tL^2_x\cap L^4_{x,t})(\R^2 \times J)}\lesssim 1+N_{\psi,J}^2.
\end{equation*}
As in the proof  of Proposition \ref{regular} (b), for any $\va>0$ we partition the interval $J$ into $M=M(\va,N_{\psi,J}^2)$ closed subintervals $J_1,\ldots, J_M$ such that
\begin{equation*}
\sum_{m=1}^2 \| \psi_m \|_{(L^{12}_tL^{12/5}_x\cap L^4_{x,t})(\R^2 \times J_l)}\leq \va,\qquad l=1,\ldots,M.
\end{equation*}
The proposition follows by applying Lemma \ref{onestep} below on every subinterval $J_l$.
\end{proof}

\begin{lemma}\label{onestep}
Assume $J=[t_1,t_2]$ is a compact interval and $\psi_m\in C(J:H^{\sigma_0-1})$, $m=1,2$, are solutions of the equations
\begin{equation*}
i\partial_t\psi_m+(\partial_1^2+\ep\partial_2^2)\psi_m=i\mathcal{N}_m,
\end{equation*}
with $\mathcal{N}_m$ defined as  in \eqref{tot1}. Assume, in addition, that
\begin{equation*}
\sum_{m=1}^2\|\psi_m\|_{(L^{12}_tL^{12/5}_x\cap L^4_{x,t})(\R^2 \times J)}\leq \va_0,
\end{equation*}
for  some $\va_0$ sufficiently small.  Then  there is $\delta_0$ sufficiently small with the following property: if $(\phi'_1,\phi'_2)\in H^{\sigma_0-1}\times H^{\sigma_0-1}$ and
\begin{equation*}
\sum_{m=1}^2\|\phi'_m-\psi_m(t_1)\|_{L^2(\R^2)}=\delta\leq\delta_0
\end{equation*}
then there is a solution $(\psi'_1,\psi'_2)\in C(J:H^{\sigma_0-1})\times C(J:H^{\sigma_0-1})$ of the system
\begin{equation*}
i\partial_t\psi'_m+(\partial_1^2+\ep\partial_2^2)\psi'_m=i\mathcal{N}'_m,\quad \psi'_m(t_1)=\phi'_m,
\end{equation*}
with $\mathcal{N}'_m$ defined as in \eqref{tot1}, and
\begin{equation}\label{nj6}
\sum_{m=1}^2\|\mathcal{N}'_m-\mathcal{N}_m\|_{(L^1_tL^2_x+L^{4/3}_{x,t})(\R^2\times J)}\leq\delta.
\end{equation}
\end{lemma}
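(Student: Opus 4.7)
The strategy is a Strichartz-based bootstrap on the difference $u_m = \psi'_m - \psi_m$, using the Lipschitz-type multilinear bound \eqref{nonlin3} to close the estimate. Since the hypothesis places $\psi$ in the small-critical-norm regime, Proposition \ref{propnon} controls the perturbation linearly.

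By Proposition \ref{regular}(a) the data $\phi'$ at time $t_1$ produces a unique maximal solution $\psi'$ whose existence interval contains $t_1$; the difference $u_m = \psi'_m - \psi_m$ solves
\[
i\partial_t u_m + (\partial_1^2 + \ep\partial_2^2) u_m = i(\mathcal{N}'_m - \mathcal{N}_m), \qquad u_m(t_1) = \phi'_m - \psi_m(t_1),
\]
with $\sum_m \|u_m(t_1)\|_{L^2} \leq \delta$. For any closed subinterval $J' = [t_1, T] \subseteq J$ on which $\psi'$ exists, the Strichartz estimate \eqref{listr} applied simultaneously with the admissible pairs $(\infty,2)$, $(4,4)$ and $(12,12/5)$ yields
\[
\sum_{m=1}^2 \|u_m\|_{(L^\infty_t L^2_x \cap L^4_{x,t} \cap L^{12}_t L^{12/5}_x)(\R^2\times J')} \lesssim \delta + \sum_{m=1}^2 \|\mathcal{N}'_m - \mathcal{N}_m\|_{(L^1_t L^2_x + L^{4/3}_{x,t})(\R^2\times J')}.
\]

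Under the bootstrap hypothesis $b(J') := \sum_m \|u_m\|_{(L^4_{x,t} \cap L^{12}_t L^{12/5}_x)(\R^2\times J')} \leq \va_0$ one has $\sum_m \|\psi'_m\|_{(L^4_{x,t} \cap L^{12}_t L^{12/5}_x)(\R^2\times J')} \leq 2\va_0 \leq 1$, so \eqref{nonlin3} applied with $a \lesssim \va_0$ and $b = b(J')$ gives
\[
\sum_m \|\mathcal{N}'_m - \mathcal{N}_m\|_{(L^1_t L^2_x + L^{4/3}_{x,t})(\R^2\times J')} \lesssim \va_0^2\, b(J').
\]
Plugging this into the Strichartz bound yields $b(J') \leq C\delta + C\va_0^2\, b(J')$; choosing $\va_0$ small enough to absorb the second term, $b(J') \leq 2C\delta$. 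For $\delta \leq \delta_0 \ll \va_0$ this strictly improves the a priori assumption, and a standard continuity-in-$T$ argument propagates the bound to the maximal $J' \subseteq J$ on which $\psi'$ exists.

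Finally, the $L^4_{x,t}$-blow-up criterion of Proposition \ref{regular}(b), combined with the uniform bound $\sum_m \|\psi'_m\|_{L^4_{x,t}(\R^2 \times J')} \leq 2\va_0$ on every such $J'$, prevents the right endpoint of the existence interval of $\psi'$ from lying strictly inside $J$; the analogous argument running backward from $t_1$ gives $J \subseteq I(\phi')$. The target estimate \eqref{nj6} is then immediate from \eqref{nonlin3} applied on all of $J$ with $a \lesssim \va_0$ and $b \lesssim \delta$. The only delicate point is matching the mixed-norm hypothesis \eqref{tobootstrap} of Proposition \ref{propnon}: once the $L^{12}_t L^{12/5}_x$-pair is added alongside $L^4_{x,t}$ in the Strichartz estimate, the bootstrap closes routinely.
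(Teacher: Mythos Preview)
Your proposal is correct and follows essentially the same approach as the paper: a Strichartz-plus-\eqref{nonlin3} bootstrap on the difference, closed by the smallness of $\va_0$, followed by the blow-up criterion of Proposition \ref{regular}(b) to extend $\psi'$ across all of $J$. The only cosmetic difference is that the paper bootstraps on $\sum_m\|\psi'_m\|$ directly (with threshold $2\va_0$) rather than on $b(J')$, and your remark about ``running backward from $t_1$'' is superfluous since $J=[t_1,t_2]$ already starts at $t_1$.
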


\begin{proof}[Proof of Lemma \ref{onestep}] We may assume $t_1=0$. From Proposition \ref{regular} (a), there is $t_2' > 0$ such that $J'=[0,t_2'] \subseteq I(\phi')$. Choose $t_2'\in(0,t_2]\cap I(\phi')$ with the property
\[
\sum_{m=1}^2\| \psi'_m\|_{(L^{12}_tL^{12/5}_x\cap L^4_{x,t})(\R^2 \times J')}\leq 2\va_0.
\]
By applying \eqref{listr2} and \eqref{nonlin3} it follows that
\begin{equation*}
\begin{split}
\| \psi_m -&\psi_m' \|_{(L^{12}_tL^{12/5}_x\cap L^4_{x,t})(\R^2 \times J')}\\
&\leq C \| \psi_m(0) -\psi'_m(0)\|_{L^2_x} +C\va_0^2 \| \psi_m -\psi'_m\|_{(L^{12}_tL^{12/5}_x\cap L^4_{x,t})(\R^2 \times J')}.
\end{split}
\end{equation*}
Thus, if $\va_0$ is sufficiently  small,
\[
\| \psi_m -\psi_m' \|_{(L^{12}_tL^{12/5}_x\cap L^4_{x,t})(\R^2 \times J')}\leq 2C\delta.
\]
As a consequence, if  $\delta_0$ is sufficiently small, $\sum_{m=1}^2\| \psi_m' \|_{(L^{12}_tL^{12/5}_x\cap L^4_{x,t})(\R^2 \times J')}\leq 3\va_0/2$. The function $H:I_+(\phi')\to[0,\infty)$, 
\begin{equation*}
H(t)=\sum_{m=1}^2\| \psi'_m\|_{(L^{12}_tL^{12/5}_x\cap L^4_{x,t})(\R^2 \times[0,t])},
\end{equation*}
is continuous, $H(0)=0$, and, as proved above, 
\begin{equation*}
\text{ if }t\in I(\phi')\cap [0,t_2]\text{ and }H(t)\leq 2\va_0\text{ then }H(t)\leq 3\va_0/2.
\end{equation*}
It follows that $H(t)\leq 3\va_0/2$ for any $t\in I(\phi')\cap [0,t_2]$. Using Proposition \ref{regular} (b), it follows that $[0,t_2]\subseteq I(\phi')$. In addition, using \eqref{nonlin3} and \eqref{listr2},
\begin{equation*}
\begin{split}
\sum_{m=1}^2&\|\mathcal{N}'_m-\mathcal{N}_m\|_{(L^1_tL^2_x+L^{4/3}_{x,t})(\R^2\times J)}\leq C\va_0^2\sum_{m=1}^2\|\psi'_m-\psi_m\|_{(L^{12}_tL^{12/5}_x\cap L^4_{x,t})(\R^2\times J)}\\
&\leq C\va_0^2\big[C\sum_{m=1}^2\|f'_m-\psi_m(0)\|_{L^2_x}+C\sum_{m=1}^2\|\mathcal{N}'_m-\mathcal{N}_m\|_{(L^1_tL^2_x+L^{4/3}_{x,t})(\R^2\times J)}\big].
\end{split}
\end{equation*}
The bound  \eqref{nj6} follows, if $\va_0$ is sufficiently small.
\end{proof}

\section{Proof of Theorem \ref{thm1}}\label{proofth1}

In this section we prove Theorem \ref{thm1}. Given data $f\in\widetilde{H}^{\sigma_0}$ as in Theorem \ref{thm1} we construct first a suitable Coulomb frame $(v,w)$ in $T_fS_\mu$ and the fields $\phi_1, \phi_2\in H^{\sigma_0-1}$. Then we construct the maximal solution $(I(\phi),\psi)$ of the modified spin system, using Proposition \ref{regular}. Finally, we construct the maximal solution $s$ on the interval $I(f)=I(\phi)$, by integrating the fields $\psi_m$.

We prove now the uniqueness of the maximal solution $(I(f),s)$. For this it suffices  to prove the following uniqueness statement:

\begin{proposition}\label{uniquesta} Assume $I\subseteq\R$ is an open interval, $t_0\in I$, and $s,s'\in C(I:\widetilde{H}^{\sigma_0})$ are solutions of the equations
\begin{equation*}
\begin{split}
&\partial_t s= s \times_\mu (s_{11}+\ep s_{22})+s_1\ze_2-\ep s_2\ze_1,\qquad\ze_m=-R_m\nabla^{-1}[2 \mu s\cdot_\mu(s_1 \times_\mu s_2)],\\
&\partial_t s'= s' \times_\mu (s'_{11}+\ep s'_{22})+s'_1\ze'_2-\ep s'_2\ze'_1,\qquad\ze'_m=-R_m\nabla^{-1}[2 \mu s'\cdot_\mu(s'_1 \times_\mu s'_2)].
\end{split}
\end{equation*}
Assume also that $s(t_0)=s'(t_0)$. Then $s=s'$ on $\R^2\times  I$. 
\end{proposition}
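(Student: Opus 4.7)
The strategy is to reduce the uniqueness problem for the spin system to the uniqueness assertion for the modified spin system established inside the proof of Proposition \ref{regular}(a). Fix $Q \in \R^3$ with $Q \cdot_\mu Q = 1$ and $Q \cdot_\mu s(0,0,t_0) = 0$; since $s(t_0) = s'(t_0)$, the same $Q$ is admissible for $s'$. Applying Proposition \ref{vwoni} with this common $Q$ to both solutions produces Coulomb frames $(v,w)$ for $s$ and $(v',w')$ for $s'$ on $\R^2 \times I$, together with the associated fields $\psi_m, A_m$ and $\psi'_m, A'_m$, which by Proposition \ref{summary} satisfy the modified spin system \eqref{bas3}-\eqref{bas4} in $C(I:H^4)$.

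The first key step is to identify the two frames at time $t_0$. The gauge-invariant products $\psi_l \overline{\psi_m}$ are determined by $s$ alone (see the discussion preceding \eqref{nj15}), and the purely spatial ones at $t = t_0$ depend only on $s(\cdot, t_0)$; hence $\widetilde{A}_1(\cdot, t_0)$ and $\widetilde{A}_2(\cdot, t_0)$, given by \eqref{nj14}, are determined by $s(\cdot, t_0) = s'(\cdot, t_0)$. Both $v(\cdot, t_0)$ and $v'(\cdot, t_0)$ are then solutions of the same spatial system $\partial_m v = -\mu s\,(v \cdot_\mu \partial_m s) + w\,\widetilde{A}_m$ with $w = s \times_\mu v$, $m = 1,2$, carrying the same initial value $v(0,0,t_0) = Q$. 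Integrability of this system is guaranteed by the Coulomb relation $\partial_1 \widetilde{A}_2 - \partial_2 \widetilde{A}_1 = \mu \Im(\psi_1\overline{\psi_2})$, so path integration from the origin yields the unique solution, forcing $v(\cdot, t_0) = v'(\cdot, t_0)$, $w(\cdot, t_0) = w'(\cdot, t_0)$, and therefore $\psi_m(\cdot, t_0) = \psi'_m(\cdot, t_0)$ for $m = 1, 2$.

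Applying the uniqueness statement proved inside Proposition \ref{regular}(a) to $(\psi_1, \psi_2)$ and $(\psi'_1, \psi'_2)$ now yields $\psi_m \equiv \psi'_m$ on $\R^2 \times I$ for $m = 1, 2$, and the formulas \eqref{bas2} immediately force $A_m \equiv A'_m$ for $m = 0, 1, 2$. To recover $s = s'$, I view \eqref{nl18.5} as a linear first-order system for the triple $(s, v, w) \in \R^9$ along each coordinate direction, with the same coefficient functions $(\psi_m, A_m)$ on both sides. Starting from the identity $(s, v, w) = (s', v', w')$ at $(0,0,t_0)$, a Gronwall argument applied first along the time axis $\{(0,0)\} \times I$ and then along spatial rays at each fixed $t$ propagates the equality to all of $\R^2 \times I$, giving in particular $s = s'$.

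The only delicate point is the gauge-matching in the second paragraph: one must verify that, although the global Coulomb frame uses spacetime information about $s$ through $\widetilde{A}_0$, the spatial slice $(v,w)|_{t=t_0}$ is entirely determined by $s(\cdot, t_0)$ and the pointwise normalization $v(0,0,t_0) = Q$. The remaining steps are straightforward consequences of the already-established uniqueness for the modified system together with standard ODE uniqueness.
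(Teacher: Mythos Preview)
Your proof is correct and follows essentially the same approach as the paper's: construct Coulomb frames for both solutions with the same base vector $Q$, use the gauge-matching at $t=t_0$ to conclude $\psi_m(t_0)=\psi'_m(t_0)$, invoke the uniqueness for the modified spin system from Proposition \ref{regular}(a), and then recover $s=s'$ from the linear ODE system \eqref{nl18.5}. Your second and fourth paragraphs simply make explicit what the paper compresses into the phrase ``we may assume that the frames $(v,w)$ and $(v',w')$ agree at time $t=t_0$ (by choosing $v(0,0,t_0)=v'(0,0,t_0)$)'': indeed, from the construction in the appendix one sees that the spatial slice of the Coulomb frame at $t=t_0$ is built purely from the spatial ODEs driven by $\widetilde{A}_1(\cdot,t_0),\widetilde{A}_2(\cdot,t_0)$, which depend only on the gauge-invariant products $\psi_l\overline{\psi_m}(\cdot,t_0)$ for $l,m\in\{1,2\}$, hence only on $s(\cdot,t_0)=s'(\cdot,t_0)$.
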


\begin{proof}[Proof of Proposition \ref{uniquesta}] We use first Proposition \ref{vwoni} and Proposition \ref{summary} to construct Coulomb frames $(v,w)$ and $(v',w')$, and fields $\psi_1,\psi_2,\psi'_1,\psi'_2\in C(I:H^4)$, which solve the evolution equations
\begin{equation*}
i\partial_t\psi_m+(\partial_1^2+\ep\partial_2^2)\psi_m=i\mathcal{N}_m,\quad i\partial_t\psi'_m+(\partial_1^2+\ep\partial_2^2)\psi'_m=i\mathcal{N}'_m,
\end{equation*} 
where $\mathcal{N}_m,\mathcal{N}'_m$ are defined as in \eqref{bas3}. Since $s(t_0)=s'(t_0)$, we may assume that the frames $(v,w)$ and $(v',w')$ agree at time $t=t_0$ (by choosing $v(0,0,t_0)=v'(0,0,t_0)$). Thus $\psi_m(t_0)=\psi'_m(t_0)$, $m=1,2$. The uniqueness statement in the proof of Proposition \ref{regular} (a) shows that $\psi_m=\psi'_m$ on $\R^2\times I$. The formulas \eqref{bas4} and \eqref{bas2} show that $\psi_0=\psi'_0$ and $A_m=A'_m$ on $\R^2\times I$, $m=0,1,2$. Finally, the linear systems \eqref{basis} show that $s=s'$, $v=v'$, $w=w'$ in $\R^2\times I$, as desired.
\end{proof}

We construct now the initial-data fields $\phi_1,\phi_2$.

\begin{proposition}\label{vwonetime}
Assume $f\in \widetilde{H}^{\sigma_0}$ and $Q\in\R^3$, $Q\cdot_\mu f(0,0)=0$, $Q\cdot_\mu Q=1$. Then there are unique $C^3$ functions $v,w:\R^2\to\R^3$,
\begin{equation}\label{nj16}
v\cdot_\mu f=0,\quad v\cdot_\mu v=1,\quad w=f\times_\mu v,\quad v(0,0)=Q,
\end{equation}
with the following property: if we define
\begin{equation}\label{lo1}
\phi_m=v\cdot_\mu\partial_m f+iw\cdot_\mu\partial_m f,\quad A_m=w\cdot_\mu\partial_m v
\end{equation}
then $\phi_m\in H^{\sigma_0-1}$, $m=1,2$,
\begin{equation}\label{nj17}
A_2=-\nabla^{-1}R_1(q_{12}),\quad A_1=\nabla^{-1}R_2(q_{12}),\quad q_{12}=\mu\Im(\phi_1\overline{\phi_2}),
\end{equation}
\begin{equation}\label{nj18}
(\partial_1+iA_1)\phi_2=(\partial_2+iA_2)\phi_1.
\end{equation}
and, for $m=1,2$,
\begin{equation}\label{nj30}
\begin{cases}
&\partial_m f=v\Re(\phi_m)+w\Im(\phi_m),\\
&\partial_m v=-f\mu\Re(\phi_m)+wA_m,\\
&\partial_m w=-f\mu \Im(\phi_m)-vA_m.
\end{cases}
\end{equation}
\end{proposition}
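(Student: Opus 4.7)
The idea is to reduce Proposition \ref{vwonetime} to its time-dependent counterpart, Proposition \ref{vwoni}, by extending $f$ trivially in the time variable.

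Fix an open interval $I \ni 0$, say $I = (-1, 1)$, and set $s(x, y, t) := f(x, y)$. Since $\sigma_0 = 10$, the Sobolev embedding $H^{\sigma_0 - 1} = H^9 \hookrightarrow C^7(\R^2)$ applied to $\partial_1 f, \partial_2 f$ shows that $f \in C^8(\R^2)$, so $s$ is jointly $C^3$ in $(x, y, t)$ and all time derivatives $\partial_t^{q'} s$ with $q' \geq 1$ vanish identically; thus $s \in C^3(I : \HH^{\sigma_0})$. Applying Proposition \ref{vwoni} with $t_0 = 0$ and the prescribed $Q$ produces $v, w \in C^3(\R^2 \times I : \R^3)$ satisfying \eqref{nl16}--\eqref{nl18.5}.

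The crucial observation is that this frame is automatically independent of $t$. From $\partial_t s \equiv 0$ we read off $\psi_0 = v \cdot_\mu \partial_t s + i w \cdot_\mu \partial_t s \equiv 0$, and then the Coulomb formula $\widetilde{A}_0 = -\sum_{l=1}^{2} \nabla^{-1} R_l[\mu \Im(\psi_l \overline{\psi_0})]$ forces $\widetilde{A}_0 \equiv 0$. Substituting these into the $m = 0$ components of the frame evolution \eqref{nl18.5} gives $\partial_t v = -s \mu \Re(\psi_0) + w \widetilde{A}_0 = 0$ and, analogously, $\partial_t w = 0$. Defining $v(x, y) := v(x, y, 0)$, $w(x, y) := w(x, y, 0)$, $\phi_m := \psi_m|_{t = 0}$ and $A_m := \widetilde{A}_m|_{t = 0}$, the desired identities \eqref{nj16}, \eqref{nj17}, \eqref{nj18} and \eqref{nj30} are exactly the restrictions to the slice $t = 0$ of \eqref{nl16}, \eqref{nl17} (for $m = 1, 2$), \eqref{nl18} (with $m, l \in \{1, 2\}$) and \eqref{nl18.5} (for $m = 1, 2$), respectively. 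Uniqueness is inherited from Proposition \ref{vwoni}.

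The one point that does not come for free from this reduction is the regularity claim $\phi_m \in H^{\sigma_0 - 1}$, since Proposition \ref{vwoni} only asserts $\psi_m \in C(I : H^4)$, and I expect this to be the main technical hurdle. To close the gap I would argue by bootstrap on the Sobolev exponent: starting from $\phi_m \in H^k$ with $k \geq 4$, the Coulomb identities \eqref{nj17} together with $L^p$-boundedness of the Riesz transforms control $\partial_n A_m$ in the same Sobolev class as $\Im(\phi_1 \overline{\phi_2})$, and the frame relations \eqref{nj30} are a first-order linear spatial system that transfers the regularity of $\partial_m f \in H^{\sigma_0 - 1}$ into the derivatives of $v, w$; tame-product estimates then upgrade $\phi_m = v \cdot_\mu \partial_m f + i w \cdot_\mu \partial_m f$ one Sobolev step at a time until $H^{\sigma_0 - 1}$ is reached. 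Since $v, w$ are bounded but not in $L^2$, the bootstrap must be phrased in terms of $\partial^\alpha v, \partial^\alpha w$ with $|\alpha| \geq 1$, using the $L^\infty$ bound on the frame itself as the baseline.
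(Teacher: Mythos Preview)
Your proposal is correct and follows essentially the same route as the paper: reduce to Proposition~\ref{vwoni} via the trivial time extension $s(x,t)=f(x)$, then run an elliptic bootstrap through the frame system \eqref{nj30} and the definitions \eqref{nj30.5} to upgrade $\phi_m$ from $H^4$ to $H^{\sigma_0-1}$. Your explicit verification that $\psi_0=0$ forces $\widetilde{A}_0=0$ and hence $\partial_t v=\partial_t w=0$ is a helpful detail the paper leaves implicit, and your remark that the bootstrap must track $\partial^\alpha v,\partial^\alpha w$ for $|\alpha|\geq 1$ (with only $L^\infty$ control on $v,w$ themselves) is exactly the point of the paper's scheme, which uses a descending chain of $L^{p_\sigma}$ spaces to close the induction.
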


\begin{proof}[Proof of Proposition \ref{vwonetime}]
The existence and uniqueness of the frame $v,w$ is a consequence of Proposition \ref{vwoni} (applied  to the function $s:\R^2\times (-1,1)$, $s(x,t)=f(x)$). The identities \eqref{nj16}-\eqref{nj30} are derived in section \ref{gauge}.

It remains to prove that $\phi_1,\phi_2\in H^{\sigma_0-1}$. For this we use a simple elliptic bootstrap argument based on the system \eqref{nj30} and the identities
\begin{equation}\label{nj30.5}
\begin{cases}
&\phi_m=v\cdot_\mu\partial_m f+iw\cdot_\mu\partial_m f,\\
&A_2=-\nabla^{-1}R_1(q_{12}),\qquad A_1=\nabla^{-1}R_2(q_{12}),\qquad q_{12}=\mu\Im(\phi_1\overline{\phi_2}).
\end{cases}
\end{equation}
Let $Q_0={}^t(1,0,0)\in S_\mu$ and
\begin{equation*}
\|f\|_{\widetilde{H}^{\sigma_0}}=d_{\sigma_0}(f,Q_0)=\|f-Q_0\|_{L^\infty}+\sum_{m=1}^2\|\partial_mf\|_{H^{\sigma_0-1}}.
\end{equation*}
By construction, $f,v,w,\phi_m,A_m$ are continuous functions on $\R^2$ and
\begin{equation}\label{nj31}
\|f\|_{L^\infty}+\|v\|_{L^\infty}+\|w\|_{L^\infty}+\sum_{m=1}^2(\|\phi_m\|_{L^2\cap L^\infty}+\|A_m\|_{L^4\cap L^\infty})\leq C(\|f\|_{\widetilde{H}^{\sigma_0}}).
\end{equation}
For $\sigma=0,1,\ldots$ and $p\in[1,\infty]$ let 
\begin{equation*}
\|\partial^\sigma g\|_{L^p}=\sum_{\sigma_1+\sigma_2=\sigma}\|\partial_1^{\sigma_1}\partial_2^{\sigma_2}g\|_{L^p}.
\end{equation*}
It follows from \eqref{nj30} and \eqref{nj31} that
\begin{equation*}
\|\partial^1f\|_{L^4\cap L^\infty}+\|\partial^1v\|_{L^4\cap L^\infty}+\|\partial^1w\|_{L^4\cap L^\infty}\leq C(\|f\|_{\widetilde{H}^{\sigma_0}}).
\end{equation*}
Using \eqref{nj30.5} it follows that $\|\partial^1\phi_m\|_{L^2\cap L^\infty}\leq C(\|f\|_{\widetilde{H}^{\sigma_0}})$ and then $\|\partial^1A_m\|_{L^2\cap L^\infty}\leq C(\|f\|_{\widetilde{H}^{\sigma_0}})$. Thus
\begin{equation*}
\begin{split}
\|\partial^1f\|_{L^4\cap L^{p_1}}&+\|\partial^1v\|_{L^4\cap L^{p_1}}+\|\partial^1w\|_{L^4\cap L^{p_1}}\\
&+\sum_{m=1}^2(\|\partial^1\phi_m\|_{L^2\cap L^{p_1}}+\|\partial^1A_m\|_{L^2\cap L^{p_1}})\leq C(\|f\|_{\widetilde{H}^{\sigma_0}}),
\end{split}
\end{equation*}
where $p_1=p_1({\sigma_0})$ is sufficiently large. A simple inductive argument then shows that
\begin{equation}\label{nj33}
\begin{split}
\|\partial^\sigma f\|_{L^4\cap L^{p_{\sigma}}}&+\|\partial^\sigma v\|_{L^4\cap L^{p_\sigma}}+\|\partial^\sigma w\|_{L^4\cap L^{p_\sigma}}\\
&+\sum_{m=1}^2(\|\partial^\sigma\phi_m\|_{L^2\cap L^{p_\sigma}}+\|\partial^\sigma A_m\|_{L^2\cap L^{p_\sigma}})\leq C(\|f\|_{\widetilde{H}^{\sigma_0}}),
\end{split}
\end{equation}
for $\sigma=1,\ldots,\sigma_0-2$ and $p_{\sigma}=p_1/2^{\sigma-1}$. We apply \eqref{nj30.5} one more time to conclude that $\|\partial^{\sigma_0-1} v\|_{L^4}+\|\partial^{\sigma_0-1} w\|_{L^4}\leq C(\|f\|_{\widetilde{H}^{\sigma_0}})$. Finally, we use the first identity in \eqref{nj30.5} to conclude that $\|\partial^{\sigma_0-1} \phi_m\|_{L^2}\leq C(\|f\|_{\widetilde{H}^{\sigma_0}})$, $m=1,2$, as desired. 
\end{proof}

\subsection{Construction of the maximal solution $(I(f),s)$}\label{psitos} 
In this subsection we construct a maximal solution $s$ of the initial-value problem \eqref{in1}. Given data $f\in\widetilde{H}^{\sigma_0}$, we construct a frame $(v,w)$ and the fields $\phi_1,\phi_2\in H^{\sigma_0-1}$ as in Proposition \ref{vwonetime}. Then we construct the maximal solution $\psi=(\psi_1,\psi_2)\in C(I(\phi):H^{\sigma_0-1})\times  C(I(\phi):H^{\sigma_0-1})$ as in Proposition \ref{regular} (a), 
\begin{equation}\label{ny1}
i \partial_t \psi_m +(\partial_1^2+\ep\partial_2^2) \psi_m = i\mathcal{N}_m,\qquad\psi_m(0)=\phi_m,
\end{equation}
where
\begin{equation}\label{ny2}
\begin{split}
&\mathcal{N}_m =-iA_0\psi_m+\sum_{l=1}^2\ep^{l+1}\big[\psi_l(\partial_lA_m+\partial_mA_l)+\psi_m(-\partial_lA_l+iA_l^2)\big],\\
&q_{12}=\mu\Im(\psi_1\overline{\psi_2}),\qquad A_2=-\nabla^{-1}R_1(q_{12}),\qquad A_1=\nabla^{-1}R_2(q_{12}),\\
&A_0=\mu\sum_{m,l=1}^2 \ep^{m+1}\big[R_lR_m\big(\Re(\overline{\psi_l}\psi_m)\big)+2|\nabla|^{-1} R_l \Im( A_m \psi_m \overline{\psi_l})\big]+\frac{\mu}{2}\sum_{m=1}^2\ep^{m+1}|\psi_m|^2.
\end{split}
\end{equation}
In view of Proposition \ref{regular} (c) and  \eqref{nj18}, the identities
\begin{equation}\label{ny3}
\DD_l\psi_m=\DD_m\psi_l,\qquad \partial_lA_m-\partial_mA_l=\mu\Im(\psi_l\overline{\psi_m}),\qquad m,l=0,1,2,
\end{equation} 
hold in $\R^2\times I(\phi)$, where $\DD_m=\partial_m+iA_m$ and
\begin{equation}\label{ny3.5}
\psi_0=i(\DD_1\psi_1+\ep\DD_2\psi_2)+2A_1\psi_1+2\ep A_2\psi_2=\sum_{m=1}^2\ep^{m+1}(i\partial_m\psi_m+A_m\psi_m).
\end{equation}

At time $t=0$, the functions $f,v,w,\phi_m,A_m$ satisfy the identities
\begin{equation}\label{ny4}
\begin{cases}
&\partial_m f=v\Re(\phi_m)+w\Im(\phi_m),\\
&\partial_m v=-f\mu\Re(\phi_m)+wA_m,\\
&\partial_m w=-f\mu \Im(\phi_m)-vA_m,
\end{cases}
\end{equation}
for $m=1,2$ (compare with \eqref{nj30}), and
\begin{equation}\label{ny5}
v\cdot_\mu f=w\cdot_\mu f=v\cdot_\mu w=0,\quad v\cdot_\mu v=w\cdot_\mu w=\mu f\cdot_\mu f=1.
\end{equation}

We define the functions $C^1$ functions $s,v,w:\R^2\times I(\phi)\to\R^2$ as the solutions of the linear homogeneous ordinary differential equations
\begin{equation}\label{ny6}
\begin{cases}
&\partial_0 s=v\Re(\psi_0)+w\Im(\psi_0),\\
&\partial_0 v=-s\mu\Re(\psi_0)+wA_0,\\
&\partial_0 w=-s\mu \Im(\psi_0)-vA_0,
\end{cases}
\end{equation}
where $s(,.0)=f$, $v(.,0)$ and $w(.,0)$ are defined as before (compare with \eqref{basis}). 

We show first that the identities in \eqref{ny5} continue to hold in $\R^2\times I(\phi)$. For this we compute, using the definition \eqref{ny6},
\begin{equation*}
\begin{split}
\partial_0(v\cdot_\mu s)&=(-s\mu\Re(\psi_0)+wA_0)\cdot_\mu s+(v\Re(\psi_0)+w\Im(\psi_0))\cdot_\mu v\\
&=A_0(w\cdot_\mu s)+\Im(\psi_0)(v\cdot_\mu w)+\Re(\psi_0)(v\cdot_\mu v-1)-\Re(\psi_0)(\mu s\cdot_\mu s-1).
\end{split}
\end{equation*}
Similarly, we compute
\begin{equation*}
\begin{split}
&\partial_0(w\cdot_\mu s)=-A_0(v\cdot_\mu s)+\Re(\psi_0)(v\cdot_\mu w)+\Im(\psi_0)(w\cdot_\mu w-1)-\Im(\psi_0)(\mu s\cdot_\mu s-1),\\
&\partial_0(v\cdot_\mu w)=-\mu\Re(\psi_0)(w\cdot_\mu s)-\mu\Im(\psi_0)(v\cdot_\mu s)+A_0(w\cdot_\mu w-1)-A_0(v\cdot_\mu v-1),\\
&\partial_0(\mu s\cdot_\mu s-1)=2\mu\Re(\psi_0)(v\cdot_\mu s)+2\mu\Im(\psi_0)(w\cdot_\mu s),\\
&\partial_0(v\cdot_\mu v-1)=-2\mu\Re(\psi_0)(v\cdot_\mu s)+2A_0(w\cdot_\mu s),\\
&\partial_0(w\cdot_\mu w-1)=-2\mu\Im(\psi_0)(w\cdot_\mu s)-2A_0(v\cdot_\mu w).
\end{split}
\end{equation*}
In view of \eqref{ny5}, it follows that
\begin{equation}\label{ny7}
v\cdot_\mu s=w\cdot_\mu s=v\cdot_\mu w=\mu s\cdot_\mu s-1=v\cdot_\mu v-1=w\cdot_\mu w-1=0\qquad\text{ on }\R^2\times I(\phi).
\end{equation}
In addition, since $s\times_\mu v=w$, $w\times_\mu s=v$, and $v\times_\mu w=\mu s$ at $t=0$, we have, by continuity,
\begin{equation}\label{ny8}
s\times_\mu v=w,\quad v\times_\mu w=\mu s,\quad w\times_\mu s=v\qquad\text{ on }\R^2\times I(\phi).
\end{equation}

We prove now that the identities \eqref{ny4} continue to hold in $\R^2\times I(\phi)$, for $m=1,2$. For $m=1,2$ let $X_m=\partial_m s-v\Re(\psi_m)-w\Im(\psi_m)$, $Y_m=\partial_m v+s\mu\Re(\psi_m)-wA_m$, $Z_m=\partial_m w+s\mu \Im(\psi_m)+vA_m$. Using the definition \eqref{ny6} we compute
\begin{equation*}
\begin{split}
\partial_0(X_m)&=\partial_m[v\Re(\psi_0)+w\Im(\psi_0)]-\partial_0[v\Re(\psi_m)+w\Im(\psi_m)]\\
&=v[\partial_m\Re(\psi_0)-\partial_0\Re(\psi_m)]+w[\partial_m\Im(\psi_0)-\partial_0\Im(\psi_m)]\\
&+(\partial_mv)\Re(\psi_0)+(\partial_mw)\Im(\psi_0)-(\partial_0v)\Re(\psi_m)-(\partial_0w)\Im(\psi_m).
\end{split}
\end{equation*}
Using $\partial_m v=Y_m+wA_m-s\mu\Re(\psi_m)$ and $\partial_mw=Z_m-vA_m-s\mu\Im(\psi_m)$, and the identities \eqref{ny6}, this  becomes
\begin{equation*}
\begin{split}
&Y_m\Re(\psi_0)+Z_m\Im(\psi_0)+v[\partial_m\Re(\psi_0)-\partial_0\Re(\psi_m)-A_m\Im(\psi_0)+A_0\Im(\psi_m)]\\
&+w[\partial_m\Im(\psi_0)-\partial_0\Im(\psi_m)+A_m\Re(\psi_0)-A_0\Re(\psi_m)]\\
&=Y_m\Re(\psi_0)+Z_m\Im(\psi_0)+v\Re(\DD_m\psi_0-\DD_0\psi_m)+w\Im(\DD_m\psi_0-\DD_0\psi_m).
\end{split}
\end{equation*}
Using  the identities \eqref{ny3}, it follows that
\begin{equation*}
\partial_0(X_m)=Y_m\Re(\psi_0)+Z_m\Im(\psi_0).
\end{equation*}
Similar computations give
\begin{equation*}
\partial_0(Y_m)=-X_m\mu\Re(\psi_0)+Z_mA_0,\qquad \partial_0(Z_m)=-X_m\mu\Im(\psi_0)-Y_mA_0.
\end{equation*}
Since $X_m,Y_m,Z_m$ vanish at $t=0$, we conclude that the identities 
\begin{equation}\label{ny10}
\begin{cases}
&\partial_m s=v\Re(\psi_m)+w\Im(\psi_m),\\
&\partial_m v=-s\mu\Re(\psi_m)+wA_m,\\
&\partial_m w=-s\mu \Im(\psi_m)-vA_m,
\end{cases}
\end{equation}
hold in $\R^2\times I(\phi)$, for $m=0,1,2$.

Using \eqref{ny10}, \eqref{ny7}, \eqref{ny8}, we derive
\begin{equation*}
2\mu s\cdot_\mu(s_1\times_\mu s_2)=2\mu[\Re(\psi_1)\Im(\psi_2)-\Im(\psi_1)\Re(\psi_2)]=-2q_{12},
\end{equation*}
thus, using \eqref{sense} and \eqref{ny2},
\begin{equation}\label{ny11}
\ze_1=-2A_2,\qquad\ze_2=2A_1.
\end{equation}
Then, using \eqref{ny10} and \eqref{ny8},
\begin{equation*}
\begin{split}
&s\times_\mu(s_{11}+\ep s_{22})+s_1\ze_2-\ep s_2\ze_1\\
&=\sum_{m=1}^2 \ep^{m+1}\big[s\times_\mu\partial_m(v\Re(\psi_m)+w\Im(\psi_m))+2(v\Re(\psi_m)+w\Im(\psi_m))A_m\big]\\
&=\sum_{m=1}^2 \ep^{m+1}\big[w(\partial_m\Re(\psi_m)+A_m\Im(\psi_m))+v(-\partial_m\Im(\psi_m)+A_m\Re(\psi_m)\big].
\end{split}
\end{equation*}
Using \eqref{ny10} and the definition \eqref{ny3.5}
\begin{equation*}
\begin{split}
\partial_0s&=w\Im(\psi_0)+v\Re(\psi_0)\\
&=w\sum_{m=1}^2 \ep^{m+1}(\partial_m\Re(\psi_m)+A_m\Im(\psi_m))+v\sum_{m=1}^2 \ep^{m+1}(-\partial_m\Im(\psi_m)+A_m\Re(\psi_m)).
\end{split}
\end{equation*}
Therefore $s$ is a solution of the initial-value problem \eqref{in1}, as desired.

We show now that $s\in C(I(\phi):\widetilde{H}^{\sigma_0})$. The definition \eqref{ny6} and the fact that $s,v,w$ are bounded at time $t=0$ show that
\begin{equation*}
s,v,w\in C(I(\phi):L^\infty).
\end{equation*}
In addition, $\psi_1,\psi_2\in C(I(\phi):H^{\sigma_0-1})$, and, using the definition \eqref{ny2}, $A_1,A_2\in C(I(\phi):L^4)$ and  $\partial_1^{\sigma_1}\partial_2^{\sigma_2}A_1,\partial_1^{\sigma_1}\partial_2^{\sigma_2}A_2\in C(I(\phi):L^2)$ for any $\sigma_1,\sigma_2\in\mathbb{Z}_+$ with $\sigma_1+\sigma_2\in[1,\sigma_0-1]$. A simple elliptic bootstrap argument, as  in the proof of Proposition \ref{vwonetime}, using the identities \eqref{ny10} for $m=1,2$, shows that 
\begin{equation*}
\partial_1s,\partial_2s\in C(I(\phi):H^{\sigma_0-1}),\partial_1^2v,\partial^2_2v,\partial_1^2w,\partial_2^2w\in C(I(\phi):H^{\sigma_0-2}),
\end{equation*}
as desired.

Finally, using again the identities \eqref{ny10} and \eqref{ny7}, we compute
\begin{equation}\label{ny30}
|Ds|=\big[\sum_{m=1}^2\partial_ms\cdot_\mu\partial_ms\big]^{1/2}=\big[\sum_{m=1}^2|\psi_m|^2\big]^{1/2}=|\psi|.
\end{equation}
It follows from Proposition \ref{regular} (b) that
\begin{equation*}
\begin{split}
&\text{ if }I_+(\phi)\text{ bounded }\text{ then }\| |Ds|\|_{L^4_{x,t}(\R^2\times I_+(\phi))}=\infty,\\
&\text{ if }I_-(\phi)\text{ bounded }\text{ then }\| |Ds|\|_{L^4_{x,t}(\R^2\times I_-(\phi))}=\infty.
\end{split}
\end{equation*}
In particular, the solution $(I(\phi),s)$ is a maximal  solution in the sense of Theorem \ref{thm1}. This completes the existence part of the proof.

\section{Proof of Theorem \ref{thm2}}\label{proofthm2}

Given $f\in \widetilde{H}^{\sigma_0}$ and $Q\in \R^3$ with $Q\cdot_\mu f(0,0)=0$, $Q\cdot_\mu Q=1$, we define the frame $(v,w)$ and the fields $\phi_1,\phi_2\in H^{\sigma_0-1}$ as in Proposition \ref{vwonetime}. We would like to understand first how the functions $\phi_m$ depend  on the choice  of the  point $Q$. Assume $Q'\in\R^3$, $Q'\cdot_\mu f(0,0)=0$, $Q'\cdot_\mu Q'=1$ is  another point and construct the corresponding frame $(v',w')$ and the differentiated fields $\phi_m'$, $m=1,2$. Then
\begin{equation*}
v'=v\cos\chi+w\sin\chi,\qquad w'=-v\sin\chi+w\cos\chi,
\end{equation*} 
for some  $\chi\in C^1(\R^2:\R)$. A simple computation shows that $A'_m=w'\cdot_\mu\partial_mv'=A_m+\partial_m\chi$. However $A_m=A'_m$, since the connection coefficients $A_m$ are defined canonically in \eqref{nj14}, thus
\begin{equation*}
\chi=\mathrm{constant} \quad\text{ on }\R^2. 
\end{equation*}
It follows from the definition \eqref{lo1} that
\begin{equation}\label{lo20}
\phi'_m=z\phi_m,\quad m=1,2,\quad \text{ for some constant }z\in\mathbb{C}\text{ with }|z|=1.
\end{equation}
To summarize, at the level of the fields $\phi_m$, the change of the base point $Q$ leads  to the simple transformation law \eqref{lo20}. Similarly, if $g\in C^3(I:\widetilde{H}^{\sigma_0})$ for some open interval $I\subseteq \R$ and the fields $\psi_m$, $m=0,1,2$, are defined as in Proposition \ref{vwoni} using a  global Coulomb gauge, then the change of the base point $Q$ leads to the transformation
\begin{equation}\label{lo20.5}
\psi'_m=z\psi_m,\quad m=0,1,2,\quad \text{ for some constant }z\in\mathbb{C}\text{ with }|z|=1.
\end{equation}

We can now define the semidistance  $\dot{d}^1$: assume $f,f'\in\HH^{\sigma_0}$, fix $Q,Q'\in\R^3$, $Q\cdot_\mu Q=Q'\cdot_\mu Q'=1$, $Q\cdot_\mu f(0,0)= Q'\cdot_\mu f'(0,0)=0$, and define frames $(v,w)$, $(v',w')$ and differentiated fields $\phi_m,\phi'_m$ as in Proposition \ref{vwonetime}. Then, we define
\begin{equation}\label{di1}
\dot{d}^1(f,f')=\inf_{|z|=1}\big[\|z\phi_1-\phi'_1\|_{L^2}^2+\|z\phi_2-\phi'_2\|_{L^2}^2\big]^{1/2}.
\end{equation} 
Similarly, given an open interval $I$, a point $t_0\in I$, and $g,g'\in C^3(I:\HH^{\sigma_0})$, fix $Q,Q'\in\R^3$, $Q\cdot_\mu Q=Q'\cdot_\mu Q'=1$, $Q\cdot_\mu g(0,0,t_0)= Q'\cdot_\mu g'(0,0,t_0)=0$, and define frames $(v,w)$, $(v',w')$ and differentiated fields $\psi_m,\psi'_m$ as in Proposition \ref{vwoni}. Then, we define
\begin{equation}\label{di1.5}
\begin{split}
\dot{\rho}_I^1(g,g')&=\inf_{|z|=1}\big[\|z\psi_1-\psi'_1\|_{L^\infty_tL^2_x(\R^2\times I)}^2+\|z\psi_2-\psi'_2\|_{L^\infty_tL^2_x(\R^2\times I)}^2\big]^{1/2}\\
&+\inf_{|z|=1}\big[\|z\psi_1-\psi'_1\|_{L^4_{x,t}(\R^2\times I)}^2+\|z\psi_2-\psi'_2\|_{L^4_{x,t}(\R^2\times I)}^2\big]^{1/2}.
\end{split}
\end{equation} 
In view of the discussion above, the definitions \eqref{di1} and  \eqref{di1.5} depend  only on the  functions $f,f'$ and $g,g'$ respectively (in the sense that they do not depend on the choice of the points $Q,Q'$) and clearly define semidistance functions on $\HH^{\sigma_0}$ and $C^3(I:\HH^{\sigma_0})$ respectively.

The identities and the inequality in \eqref{dist1} follow from the definitions and the identities \eqref{nj30} and \eqref{nl18.5} respectively. Theorem \ref{thm2} is also an immediate consequence of Proposition \ref{wellpo}, the construction in subsection \ref{psitos}, and the observation that if $\psi=(\psi_1,\psi_2)$ is a solution of the initial value problem \eqref{tot01}-\eqref{tot1} corresponding to data $\phi=(\phi_1,\phi_2)$, then $z\psi=(z\psi_1,z\psi_2)$ is also a solution corresponding to data $z\phi=(z\phi_1,z\phi_2)$, for any $z\in\mathbb{C}$ with $|z|=1$.

We prove below several additional properties of the semidistance function $\dot{d}^1$. It is not hard to see that the semidistance function $\dot{\rho}^1_I$ also satisfies similar properties.

For $r>0$ and $p\in\mathbb{R}^2$ we define the maps $\delta_r,\tau_p:\widetilde{H}^{\sigma_0}\to\widetilde{H}^{\sigma_0}$,
\begin{equation*}
(\delta_rf)(x)=f(rx),\qquad(\tau_pf)(x)=f(x+p).
\end{equation*}
We define the connected Lie groups $\mathbb{G}_\mu$, $\mu=\pm 1$,
\begin{equation*}
\mathbb{G}_\mu=\{O\in M_3(\mathbb{R}):{}^tO\cdot\eta_\mu\cdot O=\eta_\mu,\,\,\mathrm{det}(O)=1,\,O\cdot{}^t(1,0,0)\in S_\mu\}.
\end{equation*}
Thus $\mathbb{G}_1$ is the orthogonal group $SO(3)$ and $\mathbb{G}_{-1}$ is the Lorentz group $SO(2,1)$. We observe that if $O\in\mathbb{G}_\mu$ and $x,y\in\mathbb{R}^3$ then $Ox\cdot_\mu Oy=x\cdot_\mu y$ and $Ox\times_\mu Oy=O\cdot(x\times_\mu y)$ (this last identity requires $\mathrm{det}(O)=1$). Given $O\in \mathbb{G}_\mu$ we define $R_O:\widetilde{H}^{\sigma_0}\to\widetilde{H}^{\sigma_0}$,
\begin{equation*}
(R_Of)(x)=O\cdot f(x).
\end{equation*}

\begin{proposition}\label{distprop} (a) For any $r\in(0,\infty)$, $p\in\R^2$, $O\in \mathbb{G}_\mu$, and $f,f'\in \widetilde{H}^{\sigma_0}$
\begin{equation}\label{ho4}
\dot{d}^1(\delta_rf,\delta_rf')=\dot{d}^1(f,f'),\quad\dot{d}^1(\tau_pf,\tau_pf')=\dot{d}^1(f,f'),\quad\dot{d}^1(R_Of,R_{O}f')=\dot{d}^1(f,f').
\end{equation}
In addition,
\begin{equation}\label{ho5}
\dot{d}^1(f,f')=0\quad\text{ if and only if }\quad f'=R_Of\quad\text{ for some matrix }O\in\mathbb{G}_\mu.
\end{equation}

(b) The mapping $(f,f')\to\dot{d}^1(f,f')$ is continuous from $(\widetilde{H}^{\sigma_0},d_{\sigma_0})\times(\widetilde{H}^{\sigma_0},d_{\sigma_0})$ to $[0,\infty)$.
\end{proposition}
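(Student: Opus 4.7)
For the three invariances in part (a), the plan is to pull the Coulomb frame of $f$ given by Proposition \ref{vwonetime} back to $\delta_rf$, $\tau_pf$, or $R_Of$ and verify that it remains a Coulomb frame for the new function, with the transformation law on the fields being an $L^2$-isometry. For the dilation, if $(v,w)$ is the frame for $f$ with base point $Q$, setting $\widetilde{v}(x)=v(rx)$, $\widetilde{w}(x)=w(rx)$ yields $\widetilde{\phi}_m(x)=r\phi_m(rx)$ and $\widetilde{A}_m(x)=rA_m(rx)$; the homogeneity of the Fourier multipliers $\nabla^{-1}R_l$ preserves the canonical identity \eqref{nj17}, and a change of variables in the $L^2$ norm gives $\|z\widetilde{\phi}_m-\widetilde{\phi}'_m\|_{L^2}=\|z\phi_m-\phi'_m\|_{L^2}$ for any $|z|=1$. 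The translation invariance is immediate, since every operation involved commutes with spatial shifts. For the isometric invariance, the pair $v'=Ov$, $w'=Ow$ is a valid frame for $R_Of$ (using that $O$ preserves $\cdot_\mu$ and $\times_\mu$), with $A'_m=A_m$ and $\phi'_m=\phi_m$; it is therefore Coulomb, and leaves all the $L^2$ differences defining $\dot{d}^1$ unchanged.

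For the characterization \eqref{ho5}, the direction $(\Leftarrow)$ reduces to $\dot{d}^1(f,R_Of)=0$, which is the special case $f=f'$ of the isometric invariance. Conversely, assume $\dot{d}^1(f,f')=0$; by compactness of the unit circle the infimum is attained at some $z_0=e^{-i\chi_0}$, so $z_0\phi_m=\phi'_m$ in $L^2$, hence pointwise since both sides are continuous. Rotating the frame $(v,w)$ by the constant angle $\chi_0$ leaves $A_m$ unchanged (the Coulomb condition is preserved by rotations by constants) and replaces $\phi_m$ by $z_0\phi_m$, so after this rotation $\phi_m=\phi'_m$ and $A_m=A'_m$ on $\R^2$. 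The triples $(f,v,w)$ and $(f',v',w')$ then solve the same overdetermined linear system \eqref{nj30} with identical coefficients. Both $(f(0,0),v(0,0),w(0,0))$ and $(f'(0,0),v'(0,0),w'(0,0))$ are $\eta_\mu$-orthonormal bases of $\R^3$ of the same orientation (a direct computation using $v\times_\mu w=\mu f$ gives $\det[f|v|w]=1$ in both cases $\mu=\pm 1$), so there is a unique $O\in M_3(\R)$ with ${}^tO\eta_\mu O=\eta_\mu$ and $\det O=1$ mapping one basis to the other; for $\mu=-1$ the orthochronous condition $O\cdot{}^t(1,0,0)\in S_{-1}$ follows because $Of(0,0)=f'(0,0)$ lies on the upper sheet $\mathbb{H}^2$, and the two sheets of $\{y\cdot_{-1}y=-1\}$ are disjoint connected components preserved or swapped by Lorentz transformations. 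Thus $O\in\mathbb{G}_\mu$. Since $(R_Of,Ov,Ow)$ solves the same linear system as $(f',v',w')$ with matching data at $(0,0)$, uniqueness of solutions to the overdetermined system yields $f'=R_Of$.

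For part (b), the plan is to construct base points continuously and then show that the Coulomb fields themselves depend continuously on $f$. Given $f_n\to f$ in $d_{\sigma_0}$, fix $Q$ with $Q\cdot_\mu f(0,0)=0$, $Q\cdot_\mu Q=1$, and obtain $Q_n\to Q$ with the analogous properties for $f_n$ by projecting $Q$ onto the $\eta_\mu$-orthogonal complement of $f_n(0,0)$ (using $f_n(0,0)\to f(0,0)$) and renormalizing; proceed similarly for $f'_n\to f'$. Convergence of the fields will then be established in a chain: the curvature $q_{12}=-\mu f\cdot_\mu(f_1\times_\mu f_2)$ is frame-independent, so it depends continuously on $f$ in the Sobolev norm controlled by $d_{\sigma_0}$; the canonical formulas \eqref{nj17} combined with the boundedness of the Riesz-type operators give $A_m^{(n)}\to A_m$ in the norms $L^4\cap L^\infty$ with derivatives in $L^2$; Gr\"onwall applied to the linear ODE \eqref{nj30} produces uniform-on-compacts convergence of $(v_n,w_n)$ together with convergence of their derivatives; and finally $\phi_m^{(n)}=v_n\cdot_\mu\partial_mf_n+iw_n\cdot_\mu\partial_mf_n\to\phi_m$ in $L^2$. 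Once $\phi_m^{(n)}\to\phi_m$ and $\phi'^{(n)}_m\to\phi'_m$ in $L^2$, continuity of $\dot{d}^1$ follows by continuity of the joint map $(z,\phi,\phi')\mapsto\sum_{m=1}^2\|z\phi_m-\phi'_m\|^2_{L^2}$ together with compactness of $\{|z|=1\}$. The principal technical obstacle is upgrading the uniform-on-compacts convergence of $(v_n,w_n)$ to $L^2$ convergence of $\phi_m^{(n)}$ globally on $\R^2$: this requires running the elliptic bootstrap of Proposition \ref{vwonetime} with estimates uniform in $n$ and tracking the small differences $f_n-f$ through the nonlinear composition $(f_n,A_m^{(n)})\mapsto(v_n,w_n)\mapsto\phi_m^{(n)}$.
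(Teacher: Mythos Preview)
Your treatment of part (a) is correct and matches the paper's proof almost line for line: the invariances are immediate from the definitions, and for \eqref{ho5} the paper also attains the infimum, rotates the frame to eliminate the phase, matches the two $\eta_\mu$-orthonormal bases at the origin by a unique $O\in\mathbb{G}_\mu$, and then uses the linear ODE system \eqref{nj30} to conclude.

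For part (b) your plan is in the right direction but leaves the hard step unresolved, and the paper's route is cleaner. You propose to prove that $f_n\to f$ in $d_{\sigma_0}$ forces $\phi_m^{(n)}\to\phi_m$ in $L^2(\R^2)$, and you correctly flag that Gr\"onwall on \eqref{nj30} only gives convergence of $(v_n,w_n)$ uniformly on compacta, so passing to global $L^2$ convergence of $\phi_m^{(n)}$ would require a quantitative, difference-tracking version of the elliptic bootstrap in Proposition \ref{vwonetime}. That can be done, but it is genuinely extra work and you have not carried it out.

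The paper avoids this altogether. Since $\dot d^1$ is a semidistance, joint continuity reduces, via the triangle inequality, to the single estimate: for every $f$ and $\varepsilon>0$ there is $\delta>0$ with $d_{\sigma_0}(f,f')\le\delta\Rightarrow\dot d^1(f,f')\le\varepsilon$. The paper then splits $\R^2$ into $\{|x|\le R\}$ and $\{|x|>R\}$. On the tail, the pointwise bound $|\phi_m|\le N\,|\partial_m f|$ and $|\phi'_m|\le N\,|\partial_m f'|$ (coming from the uniform boundedness of the frames) together with $\|\partial_m f'-\partial_m f\|_{L^2}\le\delta$ give tightness of both $\phi_m$ and $\phi'_m$ simultaneously; no bootstrap is needed here. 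On the compact ball, the frame-independence of $\phi_m\overline{\phi_l}$ gives $\|A'_m-A_m\|_{L^\infty}\lesssim\delta$ directly, and then Gr\"onwall on \eqref{nj30} yields $|v'-v|\lesssim\delta$ on $\{|x|\le R\}$, hence $\|\phi'_m-\phi_m\|_{L^\infty(\{|x|\le R\})}\lesssim\delta$. The tail/compact split is the missing idea in your argument; once you see it, the ``principal technical obstacle'' you identified disappears.
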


\begin{proof}[Proof of Proposition \ref{distprop}] The identities \eqref{ho4} are straightforward consequences of the definitions. Also it is easy to check that $\dot{d}^1(f,R_Of)=0$ if $f\in \HH^{\sigma_0}$ and $O\in \mathbb{G}_\mu$. Thus, for \eqref{ho5}, it remains to prove that
\begin{equation}\label{di3}
\text{ if }\quad\dot{d}^1(f,f')=0\quad\text{ then }\quad f'=R_Of\quad\text{ for some }O\in\mathbb{G}_\mu.
\end{equation}
To prove this, we notice that the infimum in \eqref{di1} is attained (since the function in the right-hand side is continuous in $z$). Thus, if $\dot{d}^1(f,f')=0$ then there is $z_0\in\mathbb{C}$ with $|z_0|=1$ such that $\phi'_m=z_0\phi_m$, $m=1,2$. It follows from \eqref{nj17} that $A'_m=A_m$, $m=1,2$. By rotating the frame $(v,w)$ (see the discussion leading to \eqref{lo20}), we may assume that  $z_0=1$. To summarize, we have triples $(f,v,w)$ and $(f',v',w')$ as in Proposition \ref{vwonetime},with the property that the coefficients $\phi_m$ and $A_m$ (see  \eqref{lo1}) agree with the coefficients $\phi'_m$ and $A'_m$ respectively. Then there is a unique matrix $O\in \mathbb{G}_\mu$ such that
\begin{equation*}
R_Of(0,0)=f'(0,0),\quad R_Ov(0,0)=v'(0,0),\quad R_Ow(0,0)=w'(0,0).
\end{equation*}
Let $\delta f=f'-R_Of$, $\delta v=v'-R_Ov$, $\delta w=w'-R_Ow$. Using \eqref{nj30}
\begin{equation*}
\begin{cases}
&\partial_m (\delta f)=(\delta v)\Re(\phi_m)+(\delta w)\Im(\phi_m),\\
&\partial_m (\delta v)=-(\delta f)\mu\Re(\phi_m)+(\delta w)A_m,\\
&\partial_m (\delta w)=-(\delta f)\mu\Im(\phi_m)-(\delta v)A_m
\end{cases}  
\end{equation*}
on $\R^2$, for $m=1,2$. Since $\delta f, \delta v, \delta w$ vanish at $(0,0)$ it follows that $\delta f$ vanishes in $\mathbb{R}^2$, as desired.

We prove now part (b). Since $\dot{d}$ is a semidistance, it suffices to prove that for any $f\in \widetilde{H}^{\sigma_0}$ and $\va>0$ there is $\delta=\delta(f,\va)>0$ such that
\begin{equation}\label{ho20}
\text{ if }f'\in\widetilde{H}^{\sigma_0}\text{ and }d_{\sigma_0}(f,f')\leq\delta\text{ then }\dot{d}^1(f,f')\leq\va.
\end{equation}
Given $f,f'$ as above we fix Coulomb frames $(v,w)$ and $(v',w')$ as  in Proposition \ref{vwonetime}, with $|v(0,0)-v'(0,0)|\lesssim\delta$, and construct the fields $\phi_m$ and $\phi'_m$. Since $f$ is bounded, there is $N=N(f)\geq 1$ such that
\begin{equation}\label{ho21}
\|f\|_{L^\infty}+\|f'\|_{L^\infty}+\|v\|_{L^\infty}+\|v'\|_{L^\infty}+\|w\|_{L^\infty}+\|w'\|_{L^\infty}\leq N.
\end{equation}
Also, since $\phi_m=v\cdot_\mu\partial_mf+iw\cdot_\mu\partial_mf$ and $\phi'_m=v'\cdot_\mu\partial_mf'+iw'\cdot_\mu\partial_mf'$, it follows from \eqref{ho21} and the definition of the distance $d_{\sigma_0}$ (see \eqref{ho3}) that there is $R=R(f,\va)\geq 1$ such that
\begin{equation*}
\sum_{m=1}^2(\|\phi_m\|_{L^2(\{|x|\geq R\})}+\|\phi'_m\|_{L^2(\{|x|\geq R\})})\leq\va/4.
\end{equation*}
Thus, for \eqref{ho20} it suffices to prove that
\begin{equation}\label{ho22}
\sum_{m=1}^2\|\phi'_m-\phi_m\|_{L^\infty(\{|x|\leq R\})}\lesssim\delta,
\end{equation}
where the implicit constant in \eqref{ho22} is allowed to depend on $\va,N,R$.

Recall that the functions $\phi_m\overline{\phi_l}$ and $\phi'_m\overline{\phi'_l}$, $m,l=1,2$, do not depend on the choices of the frames $(v,w)$ and $(v',w')$ respectively (see the discussion at the beginning of section \ref{gauge}). Therefore, by working with local frames $(\widetilde{v},\widetilde{w})$ and $(\widetilde{v'},\widetilde{w'})$ satisfying \eqref{ho21} and  $|\widetilde{v}-\widetilde{v'}|+|\widetilde{w}-\widetilde{w'}|\lesssim\delta$, it follows that
\begin{equation*}
\sum_{m,l=1}^2\|\phi'_m\overline{\phi'_l}-\phi_m\overline{\phi_l}\|_{(L^1\cap L^\infty)(\mathbb{R}^2)}\lesssim \delta.
\end{equation*}
As a consequence,
\begin{equation}\label{ho25}
\sum_{m=1}^2\|A'_m-A_m\|_{L^\infty}\lesssim\delta.
\end{equation}

We prove now the bound \eqref{ho22}. We use the differential equations
\begin{equation*}
\partial_mv=-\mu f\cdot{}^t\partial_mf\cdot\eta_\mu\cdot v+(f\times_\mu v)A_m,\quad m=1,2,
\end{equation*}
see \eqref{nj30}, and the corresponding equations for $v'$. We take the difference of the equations and use \eqref{ho25} and the bounds $|f'-f|\lesssim\delta$, $|\partial_mf'-\partial_mf|\lesssim\delta$ to conclude that
\begin{equation*}
|\partial_m(v'-v)|\lesssim |v'-v|+\delta,\qquad m=1,2.
\end{equation*}
Since $|v'(0,0)-v(0,0)|\lesssim\delta$ it follows that $|v'-v|\lesssim\delta$ in the ball $\{|x|\leq R\}$. The bound \eqref{ho22} follows.
\end{proof}

\section{Reduction to the Davey--Stewartson II equation}\label{link}

It is well known that the Ishimori system, which corresponds to $\ep=-1$, is related to the Davey--Stewartson II equation, at least in the focusing case $\mu=1$. In this section we derive this connection explicitly, starting from our modified spin system, see Proposition \ref{summary}. Recall the formulas
\begin{equation}\label{basn2}
\begin{split}
&q_{12}=\mu\Im(\psi_1\overline{\psi_2}),\qquad A_2=-\nabla^{-1}R_1(q_{12}),\qquad A_1=\nabla^{-1}R_2(q_{12}),\\
&A_0=\mu\sum_{m,l=1}^2 \ep^{m+1}\big[R_lR_m\big(\Re(\overline{\psi_l}\psi_m)\big)+2|\nabla|^{-1} R_l \Im( A_m \psi_m \overline{\psi_l})\big]+\frac{\mu}{2}\sum_{m=1}^2\ep^{m+1}|\psi_m|^2,
\end{split}
\end{equation}
and the equations
\begin{equation} \label{basn3}
\begin{split}
&i \partial_t \psi_m +(\partial_1^2+\ep\partial_2^2) \psi_m = i\mathcal{N}_m,\\
&\mathcal{N}_m =-iA_0\psi_m+\sum_{l=1}^2\ep^{l+1}\big[\psi_l(-q_{ml}+2\partial_mA_l)+\psi_m(-\partial_lA_l+iA_l^2)\big].
\end{split}
\end{equation}

Assume in this section that $\ep=-1$ (the Ishimori system). In this case we expand
\begin{equation*}
\begin{split}
&\mu A_0=\sum_{m,l=1}^2 \ep^{m+1}\big[R_lR_m\big(\Re(\overline{\psi_l}\psi_m)\big)+2|\nabla|^{-1} R_l \Im( A_m \psi_m \overline{\psi_l})\big]+\frac{1}{2}\sum_{m=1}^2\ep^{m+1}|\psi_m|^2\\
&=\sum_{m,l=1}^2 \ep^{m+1}R_lR_m\big(\Re(\overline{\psi_l}\psi_m)\big)+\frac{1}{2}\sum_{m=1}^2\ep^{m+1}|\psi_m|^2+2\sum_{m,l=1}^2 \ep^{m+1}|\nabla|^{-1} R_l [A_m\Im(\psi_m \overline{\psi_l})]\\
&=R_1^2(|\psi_1|^2)-R_2^2(|\psi_2|^2)+\frac{1}{2}|\psi_1|^2-\frac{1}{2}|\psi_2|^2+2\sum_{m,l=1}^2 \ep^{m+1}|\nabla|^{-1} R_l (A_m\cdot\mu q_{ml})\\
&=\frac{1}{2}(R_1^2-R_2^2)(|\psi_1|^2+|\psi_2|^2)+2\mu\nabla^{-1}R_2(A_1q_{12})-2\mu\nabla^{-1}R_1(A_2q_{21})\\
&=\frac{1}{2}(R_1^2-R_2^2)(|\psi_1|^2+|\psi_2|^2)+\mu\nabla^{-2}F
\end{split}
\end{equation*}
where, using the Coulomb condition $\partial_1A_1+\partial_2A_2=0$,
\begin{equation*}
\begin{split}
F&=2\partial_2(A_1q_{12})-2\partial_1(A_2q_{21})=2\partial_2(A_1\partial_1A_2-A_1\partial_2A_1)+2\partial_1(A_2\partial_1A_2-A_2\partial_2A_1)\\
&=2\partial_2[\partial_1(A_1A_2)+A_2\partial_2A_2-A_1\partial_2A_1]+2\partial_1[-\partial_2(A_1A_2)-A_1\partial_1A_1+A_2\partial_1A_2]\\
&=\Delta(A_2^2-A_1^2).
\end{split}
\end{equation*}
Thus
\begin{equation}\label{A0}
A_0=\frac{\mu}{2}(R_1^2-R_2^2)(|\psi_1|^2+|\psi_2|^2)+A_1^2-A_2^2.
\end{equation}

We compute now, using \eqref{basn3}
\begin{equation*}
\begin{split}
i\mathcal{N}_1&=A_0\psi_1+i\psi_1(-\partial_1A_1+iA_1^2+\partial_2A_2-iA_2^2)+i\psi_12\partial_1A_1-i\psi_2(-q_{12}+2\partial_1A_2)\\
&=\psi_1\cdot\frac{\mu}{2}(R_1^2-R_2^2)(|\psi_1|^2+ |\psi_2|^2)-i\psi_2(\partial_2A_1+\partial_1A_2)\\
&=\frac{\mu}{2}\Big[\psi_1\cdot(R_1^2-R_2^2)(|\psi_1|^2+|\psi_2|^2)+i\psi_2\cdot (R_1^2-R_2^2)(2\mu q_{12})\Big]. 
\end{split}
\end{equation*}
Similarly,
\begin{equation*}
\begin{split}
i\mathcal{N}_2&=A_0\psi_2+i\psi_2(-\partial_1A_1+iA_1^2+\partial_2A_2-iA_2^2)+i\psi_1(-q_{21}+2\partial_2A_1)-i\psi_22\partial_2A_2\\
&=\psi_2\cdot\frac{\mu}{2}(R_1^2-R_2^2)(|\psi_1|^2+ |\psi_2|^2)+i\psi_1(\partial_2A_1+\partial_1A_2)\\
&=\frac{\mu}{2}\Big[\psi_2\cdot(R_1^2-R_2^2)(|\psi_1|^2+|\psi_2|^2)-i\psi_1\cdot (R_1^2-R_2^2)(2\mu q_{12})\Big].
\end{split}
\end{equation*}
Thus the system in the first line of \eqref{basn3} becomes
\begin{equation*}
\begin{split}
&(i\partial_0+\partial_1^2-\partial_2^2)\psi_1=f\psi_1+ig\psi_2,\quad (i\partial_0+\partial_1^2-\partial_2^2)\psi_2=f\psi_2-ig\psi_1,\\
&f=(\mu/2)(R_1^2-R_2^2)(|\psi_1|^2+|\psi_2|^2),\quad g=(\mu/2)(R_1^2-R_2^2)(2\mu q_{12}).
\end{split}
\end{equation*}
This system can be decoupled: let $\Phi_{\pm}=\psi_1\pm i\psi_2$. Then
\begin{equation*}
\begin{split}
&(i\partial_0+\partial_1^2-\partial_2^2)\Phi_+=(f+g)\Phi_+,\\
&(i\partial_0+\partial_1^2-\partial_2^2)\Phi_-=(f-g)\Phi_-.
\end{split}
\end{equation*}
Finally, we observe that
\begin{equation*}
\begin{split}
&f+g=(\mu/2)(R_1^2-R_2^2)(|\psi_1|^2+|\psi_2|^2+2\Im(\psi_1\overline{\psi_2}))=(\mu/2)(R_1^2-R_2^2)(\Phi_+\overline{\Phi_+}),\\
&f-g=(\mu/2)(R_1^2-R_2^2)(|\psi_1|^2+|\psi_2|^2-2\Im(\psi_1\overline{\psi_2}))=(\mu/2)(R_1^2-R_2^2)(\Phi_-\overline{\Phi_-}).
\end{split}
\end{equation*}
Therefore we get two decoupled identical equations, for $\Phi=\Phi_{\pm}$
\begin{equation}\label{DS2}
(i\partial_0+\partial_1^2-\partial_2^2)\Phi=(\mu/2)(R_1^2-R_2^2)(|\Phi|^2)\cdot\Phi.
\end{equation}
This is the Davey--Stewartson II equation. 

In other words, in the Ishimori case $\ep=-1$, the modified spin system derived in Proposition \ref{summary} can be simplified algebraically to the Davey--Stewartson II equation \eqref{DS2}, which holds for both functions $\Phi_{\pm}=\psi_1\pm i\psi_2$. This can be used to simplify the analysis of the modified spin system in section \ref{modwell}, in the case $\ep=-1$. Using inverse scattering methods, it is known that the defocusing Davey-Stewartson II equation admits global solutions for $H^{\sigma_0-1}$ data with suitable decay at infinity (see \cite{Su1}--\cite{Su3}). The analysis in subsection \ref{psitos} shows that this leads to global solutions of the original defocusing Ishimori system. More precisely, we have the following large-data global regularity theorem:

\begin{theorem}\label{thmglobal}
Assume $\sigma_0=10$, $\mu=\ep=-1$, and $f\in\widetilde{H}^{\sigma_0}$. Assume, in addition, that $f$ is constant outside a compact set. Then there is a unique global solution $s\in C^3(\mathbb{R}:\widetilde{H}^{\sigma_0})$ of the defocusing Ishimori initial-value problem
\begin{equation*}
\begin{cases}
&\partial_t s= s \times_\mu (s_{11}+\ep s_{22})+s_1\ze_2-\ep s_2\ze_1,\quad\ze_m=-R_m\nabla^{-1}[2 \mu s\cdot_\mu(s_1 \times_\mu s_2)],\\
&s(0)=f.
\end{cases}
\end{equation*}
\end{theorem}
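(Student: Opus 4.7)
The proof proceeds by combining Theorem \ref{thm1}, the algebraic reduction to the defocusing Davey-Stewartson II equation carried out above, and the known global well-posedness theory for defocusing DS II via inverse scattering (\cite{Su1}--\cite{Su3}). First, invoke Theorem \ref{thm1} to obtain the maximal classical solution $(I(f),s)$ and, via the Coulomb gauge construction of Proposition \ref{vwonetime} applied at $t=0$, the initial differentiated fields $\phi_1,\phi_2\in H^{\sigma_0-1}$. Since $f$ is constant outside a compact set $K\subset\mathbb{R}^2$, the derivatives $\partial_m f$ vanish outside $K$, so the identity $\phi_m=v\cdot_\mu\partial_mf+iw\cdot_\mu\partial_mf$ forces each $\phi_m$ to be compactly supported in $K$; in particular $\phi_m$ belongs to every weighted Sobolev space demanded by the inverse scattering literature.

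Next, form $\Phi_{\pm}(0):=\phi_1\pm i\phi_2$ and feed them as data to two copies of the defocusing DS II equation
$$
(i\partial_t+\partial_1^2-\partial_2^2)\Phi=-\tfrac{1}{2}(R_1^2-R_2^2)(|\Phi|^2)\cdot\Phi,
$$
obtained from \eqref{DS2} by setting $\mu=-1$. Sung's global theory \cite{Su1}--\cite{Su3} produces global solutions $\Phi_\pm$ satisfying the critical scattering bound $\|\Phi_\pm\|_{L^4_{x,t}(\mathbb{R}^2\times\mathbb{R})}<\infty$. On the maximal interval $I(f)$, the functions $\psi_m$ solve the modified spin system of Proposition \ref{summary}, so the reduction at the end of this section shows that $\psi_1\pm i\psi_2$ satisfy the same DS II equation with the same data. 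DS II uniqueness then yields $\psi_1\pm i\psi_2=\Phi_\pm$ on $\mathbb{R}^2\times I(f)$, and the identity $|Ds|^2=|\psi_1|^2+|\psi_2|^2$ from \eqref{ny30}, combined with $|\psi_m|\le\tfrac{1}{2}(|\Phi_+|+|\Phi_-|)$, yields $\||Ds|\|_{L^4_{x,t}(\mathbb{R}^2\times I(f))}<\infty$. The blow-up criterion in Theorem \ref{thm1}(b) then forces $I(f)=\mathbb{R}$, and uniqueness of $s$ is provided by Proposition \ref{uniquesta}.

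The main obstacle is matching the functional framework of Sung's inverse-scattering solutions with the classical $C^3(\mathbb{R}:\widetilde{H}^{\sigma_0})$ regularity demanded by the statement. Once the global $L^4_{x,t}$ bound on $\Phi_\pm$ is in hand, one partitions $\mathbb{R}$ into finitely many subintervals $J_\ell$ on each of which $\sum_m\|\psi_m\|_{L^{12}_tL^{12/5}_x\cap L^4_{x,t}(\mathbb{R}^2\times J_\ell)}$ is small, and iterates the higher-regularity bound \eqref{nonlin2} of Proposition \ref{propnon} across these slabs, exactly as in the proof of Proposition \ref{regular}(b); this propagates the $H^{\sigma_0-1}$ norm of $\psi_m$ globally, and then the reconstruction of subsection \ref{psitos} delivers the required $C^3(\mathbb{R}:\widetilde{H}^{\sigma_0})$ solution $s$.
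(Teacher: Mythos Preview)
Your proposal is correct and follows essentially the same route the paper sketches in the paragraph preceding Theorem \ref{thmglobal}: pass to the Coulomb-gauge fields, reduce to two decoupled copies of the defocusing Davey--Stewartson II equation via \eqref{DS2}, invoke Sung's global theory \cite{Su1}--\cite{Su3} (the compact support of $\phi_m$ supplying the required decay), and then return to $s$. The paper phrases the last step as ``the analysis in subsection \ref{psitos}'' whereas you organize it through the blow-up criterion of Theorem \ref{thm1}(b); these are equivalent, since $I(f)=I(\phi)$ by construction and the finiteness of $\||\psi|\|_{L^4_{x,t}}$ on bounded intervals (or directly the global $H^{\sigma_0-1}$ control from Sung) forces $I(\phi)=\mathbb{R}$.
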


\appendix
\section{Proof of  Proposition \ref{vwoni}}\label{appendix}

In this section we prove Proposition \ref{vwoni}. The idea is to construct the frame $(v,w)$ as the unique solution of the ODE \eqref{nl18.5}. The main step is the following lemma. 

\begin{lemma}\label{CloCoulomb}
Assume $s$, $t_0$, and $Q$ are as in Proposition \ref{vwoni}. Then there is a unique function $v\in C^1(\R^2\times I:\R^3)$ with the properties
\begin{equation}\label{lo6}
\begin{cases}
&\partial_mv=-\mu s\cdot{}^t\partial_ms\cdot\eta_\mu\cdot v+(s\times_\mu v)\widetilde{A}_m,\quad m=0,1,2,\\
&v(0,0,t_0)=Q.
\end{cases}
\end{equation}
In addition $v\cdot_\mu s=0$ and $v\cdot_\mu v=1$ on $\R^2\times I$.
\end{lemma}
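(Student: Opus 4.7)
The plan is to construct $v$ by integrating the overdetermined system \eqref{lo6} sequentially along the coordinate directions and then verify global integrability via a Frobenius-type argument. Writing the right-hand side as a linear operator
\begin{equation*}
B_m(v) := -\mu s\,(\partial_m s \cdot_\mu v) + \widetilde{A}_m\,(s \times_\mu v),\qquad m=0,1,2,
\end{equation*}
the system reads $\partial_m v = B_m(v)$ with $C^1$ coefficients. First, I would solve the linear ODE $\partial_0 v = B_0(v)$ on the $t$-axis $\{(0,0,t):t\in I\}$ starting from $v(0,0,t_0)=Q$; then, for each $t$, solve $\partial_1 v = B_1(v)$ along $\{(x_1,0,t):x_1\in\R\}$; finally, for each $(x_1,t)$, solve $\partial_2 v = B_2(v)$ along $\{(x_1,x_2,t):x_2\in\R\}$. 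Standard ODE theory yields a unique $v\in C^1(\R^2\times I:\R^3)$ satisfying $\partial_2 v = B_2(v)$ on all of $\R^2\times I$, with $\partial_1 v = B_1(v)$ holding a priori only on $\{x_2=0\}$ and $\partial_0 v = B_0(v)$ only on $\{(0,0,t)\}$.

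The scalar identities $v\cdot_\mu s = 0$ and $v\cdot_\mu v = 1$ propagate automatically along each ODE: using $s\cdot_\mu s = \mu$, $(s\times_\mu v)\cdot_\mu s = 0$, and $v\cdot_\mu(s\times_\mu v) = 0$, whenever $\partial_m v = B_m(v)$ holds one computes
\begin{equation*}
\partial_m(v\cdot_\mu s) = -\mu(s\cdot_\mu s)(\partial_m s \cdot_\mu v) + v\cdot_\mu\partial_m s = 0,
\end{equation*}
and once $v\cdot_\mu s = 0$, $\partial_m(v\cdot_\mu v) = -2\mu(v\cdot_\mu s)(\partial_m s\cdot_\mu v) = 0$. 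Combined with $Q\cdot_\mu s(0,0,t_0) = 0$ and $Q\cdot_\mu Q = 1$, propagating these conservation laws sequentially along the three integrations yields $v\cdot_\mu s = 0$ and $v\cdot_\mu v = 1$ throughout $\R^2\times I$.

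The main obstacle is the integrability check: I must show that $\partial_l v = B_l(v)$ holds everywhere for $l=0,1$, not merely on the curves where it was imposed. For $l=1$, set $u := \partial_1 v - B_1(v)$; using $\partial_2 v = B_2(v)$ together with commutation of mixed partials, a direct computation produces
\begin{equation*}
\partial_2 u = K_{12}(v) + B_2(u),\qquad K_{12} := \partial_1 B_2 - \partial_2 B_1 + [B_2, B_1].
\end{equation*}
The key fact is that $K_{12}(v) = 0$ whenever $v\cdot_\mu s = 0$: expanding $B_m$ and decomposing $K_{12}(v)$ along the basis $\{s,v,s\times_\mu v\}$, the $v$-component vanishes trivially (scalars commute and $v\times_\mu v = 0$), the component along $s\times_\mu v$ reduces to $\partial_1\widetilde{A}_2 - \partial_2\widetilde{A}_1 - q_{12}$, and the $s$-component cancels between the contribution of the $\widetilde{A}_l(\partial_m s\times_\mu v)$ terms and that of $[B_2,B_1](v)$. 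The vanishing of the $(s\times_\mu v)$-component follows from the identity $\partial_1\widetilde{A}_2 - \partial_2\widetilde{A}_1 = q_{12}$, which in turn is a consequence of the Fourier-multiplier identity $R_1^2+R_2^2 = -I$ applied to the defining formula \eqref{nj14}. Thus $u$ satisfies the homogeneous linear equation $\partial_2 u = B_2(u)$ with $u|_{x_2=0}=0$, forcing $u\equiv 0$ by ODE uniqueness; hence $\partial_1 v = B_1(v)$ holds on all of $\R^2\times I$. An analogous two-step argument — first extend $\partial_0 v = B_0(v)$ in $x_1$ along $\{x_2=0\}$, then in $x_2$ — based on the curvature identities $\partial_0\widetilde{A}_m - \partial_m\widetilde{A}_0 = q_{0m}$ (which follow from \eqref{nj14} together with the fact that $dq = 0$, since $q$ is the pullback under $s$ of the area form on $S_\mu$) yields $\partial_0 v = B_0(v)$ throughout. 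Uniqueness of $v$ follows immediately from ODE uniqueness along any path from $(0,0,t_0)$.
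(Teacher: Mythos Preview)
Your argument is correct and follows essentially the same strategy as the paper: construct $v$ by sequentially integrating the linear ODE $\partial_m v = B_m(v)$ along coordinate axes, verify that the orthogonality relations $v\cdot_\mu s=0$, $v\cdot_\mu v=1$ propagate along each ODE, and then check the Frobenius integrability conditions to upgrade the remaining equations from the initial curves to the full domain. The paper simply chooses a different order of integration (first $x_1$, then $x_2$, then $t$, rather than your $t$, $x_1$, $x_2$) and carries out the integrability check by computing the components $s\cdot_\mu X$, $v\cdot_\mu X$, $w\cdot_\mu X$ of the defect $X=\partial_1v-B_1(v)$ directly, instead of packaging it as the curvature operator $K_{12}$.

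One point worth noting: your justification of the time-direction curvature identities $\partial_0\widetilde{A}_m-\partial_m\widetilde{A}_0=q_{0m}$ via the Bianchi identity $dq=0$ (interpreting $q$ as the pullback of the area form on $S_\mu$) is both correct and more transparent than what the paper provides. The paper's own proof needs exactly this identity to handle the final step (extending $\partial_m v=B_m(v)$, $m=1,2$, from $t=0$ to all of $I$), but there it is absorbed into the phrase ``a computation similar to \eqref{blo} (using the definition \eqref{nj14})''; your geometric observation makes explicit why that computation succeeds.
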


\begin{proof}[Proof of Lemma \ref{CloCoulomb}] We may assume $t_0=0$ and observe that the first equation in \eqref{lo6} is consistent with the corresponding equation in \eqref{nl18.5}, since $\Re(\psi_m)=v\cdot_\mu\partial_ms= {}^t\partial_ms\cdot\eta_\mu\cdot v$ and $w=s\times_\mu v$. The equations are of the form $\partial_m v=B_m\cdot v$, for some continuous matrices $B_m$, which gives the uniqueness of $v$. To prove existence, we define first $v(x_1,0,0)$, $x_1\in\R$, by solving the linear homogeneous ODE
\begin{equation}\label{lo7}
\partial_1v=-\mu s\cdot{}^t\partial_1s\cdot\eta_\mu\cdot v+(s\times_\mu v)\widetilde{A}_1,\qquad v(0,0,0)=Q.
\end{equation}
The function $v(.,0,0)$ is a well-defined $C^1$ function on $\R$. Using the equation,
\begin{equation*}
\partial_1(s\cdot_\mu v)=\partial_1({}^ts\cdot\eta_\mu\cdot v)={}^t\partial_1s\cdot\eta_\mu\cdot v+{}^ts\cdot\eta_\mu\cdot[-\mu s\cdot{}^t\partial_1s\cdot\eta_\mu\cdot v+(s\times_\mu v)\widetilde{A}_1]=0,
\end{equation*}
thus $s(x_1,0,0)\cdot_\mu v(x_1,0,0)=0$ for $x_1\in \R$. Using the equation again
\begin{equation*}
\partial_1(v\cdot_\mu v)=\partial_1({}^tv\cdot\eta_\mu\cdot v)=2{}^tv\cdot\eta_\mu\cdot[-\mu s\cdot{}^t\partial_1s\cdot\eta_\mu\cdot v+(s\times_\mu v)\widetilde{A}_1]=0,
\end{equation*}
thus $v(x_1,0,0)\cdot_\mu v(x_1,0,0)=1$ for $x_1\in\R$. 

We extend now $v$ to $\R^2\times\{0\}$ by solving the linear ODE, for every $x_1$ fixed,
\begin{equation}\label{lo8}
\partial_2v=-\mu s\cdot{}^t\partial_2s\cdot\eta_\mu\cdot v+(s\times_\mu v)\widetilde{A}_2,
\end{equation}
with $v(x_1,0,0)$ determined before. The same argument as before shows that 
\begin{equation}\label{nj20}
v\cdot_\mu s=0\quad\text{ and }\quad v\cdot_\mu v=1\quad\text{ on }\R^2\times\{0\}. 
\end{equation}
We prove now the identity 
\begin{equation}\label{lo8.5}
\partial_1v=-\mu s\cdot{}^t\partial_1s\cdot\eta_\mu\cdot v+(s\times_\mu v)\widetilde{A}_1\qquad \text{ on }\R^2\times\{0\}.
\end{equation}
Let $X=\partial_1v+\mu s\cdot{}^t\partial_1s\cdot\eta_\mu\cdot v-(s\times_\mu v)\widetilde{A}_1$. In view of \eqref{lo7}
\begin{equation}\label{lo9}
X(x_1,0,0)=0\quad\text{ for any }x_1\in\R.
\end{equation}
Since $v\cdot_\mu s=0$ and $v\cdot_\mu v=1$ on $\R^2$, it is clear that $v\cdot_\mu X=0$ on $\R^2$. Also
\begin{equation*}
s\cdot_\mu X={}^ts\cdot\eta_\mu\cdot[\partial_1v+\mu s\cdot{}^t\partial_1s\cdot\eta_\mu\cdot v-(s\times_\mu v)\widetilde{A}_1]=\partial_1({}^ts\cdot\eta_\mu\cdot v)=0,
\end{equation*}
on $\R^2\times\{0\}$. Let $w=s\times_\mu v$, and observe that, as a consequence of \eqref{nj20},
\begin{equation}\label{nj21}
w\cdot_\mu v=w\cdot_\mu s=w\cdot_\mu w-1=0,\qquad v\times_\mu w=\mu s,\qquad w\times_\mu s=v.
\end{equation}
For \eqref{lo8.5} it remains to prove that $w\cdot_\mu X=0$, which is equivalent to proving that
\begin{equation}\label{lo11}
{}^tw\cdot\eta_\mu\cdot\partial_1v-\widetilde{A}_1=0\quad\text{ on }\R^2\times\{0\}.
\end{equation}
Using \eqref{lo8}, we have
\begin{equation}\label{lo15}
\partial_2w=\partial_2s\times_\mu v-v\widetilde{A}_2=-\mu s\Im(\phi_2)-v\widetilde{A}_2\quad\text{ on }\R^2\times\{0\},
\end{equation}
where, on $\R^2\times\{0\}$, $\phi_m=v\cdot_\mu\partial_ms+iw\cdot_\mu\partial_ms$, $m=1,2$. Thus, using also \eqref{lo8} and the definition \eqref{nj14},
\begin{equation}\label{blo}
\begin{split}
\partial_2&({}^tw\cdot\eta_\mu\cdot\partial_1v-\widetilde{A}_1)={}^t\partial_2w\cdot\eta_\mu\cdot\partial_1v+{}^tw\cdot\eta_\mu\cdot\partial_1\partial_2v-\partial_2\widetilde{A}_1\\
&=-\mu\Im(\phi_2){}^ts\cdot\eta_\mu\cdot\partial_1v+{}^tw\cdot\eta_\mu\cdot\partial_1(-\mu s\cdot{}^t\partial_2s\cdot\eta_\mu\cdot v+w\widetilde{A}_2)-\partial_2\widetilde{A}_1\\
&=\mu\Im(\phi_2){}^tv\cdot\eta_\mu\cdot\partial_1s-\mu({}^tw\cdot\eta_\mu\cdot\partial_1s)({}^tv\cdot\eta_\mu\cdot \partial_2s)+\partial_1\widetilde{A}_2-\partial_2\widetilde{A}_1\\
&=\mu\Im(\phi_2)\Re(\phi_1)-\mu\Im(\phi_1)\Re(\phi_2)+\partial_1\widetilde{A}_2-\partial_2\widetilde{A}_1\\
&=0.
\end{split}
\end{equation}
The identity \eqref{lo11} follows since $({}^tw\cdot\eta_\mu\cdot\partial_1v-\widetilde{A}_1)(x_1,0,0)=0$, using \eqref{lo9}. This completes the proof of \eqref{lo8.5}.

Finally, we extend the vector $v$ to $\R^2\times I$ by solving the linear ODE, for every $(x_1,x_2)$ fixed,
\begin{equation}\label{lm8}
\partial_0v=-\mu s\cdot{}^t\partial_0s\cdot\eta_\mu\cdot v+(s\times_\mu v)\widetilde{A}_0,
\end{equation}
with $v(x_1,x_2,0)$ defined earlier. As before, it is easy to see that
\begin{equation}\label{lm8.1}
v\cdot_\mu s=0\quad\text{ and }\quad v\cdot_\mu v=1\quad\text{ on }\R^2\times I. 
\end{equation}
It remains to prove the identities
\begin{equation}\label{lm8.5}
\partial_mv=-\mu s\cdot{}^t\partial_ms\cdot\eta_\mu\cdot v+(s\times_\mu v)\widetilde{A}_m\qquad \text{ on }\R^2\times I,
\end{equation}
for $m=1,2$. For this we let $Y_m=\partial_mv+\mu s\cdot{}^t\partial_ms\cdot\eta_\mu\cdot v-(s\times_\mu v)\widetilde{A}_m$, $m=1,2$, and observe that $v\cdot_\mu Y_m=s\cdot_\mu Y_m=0$, as a consequence of \eqref{lm8.1}. A computation similar to \eqref{blo} (using the definition \eqref{nj14} of the coefficients $\widetilde{A}_m$) shows that $w\cdot_\mu Y_m=0$, where $w=s\times_\mu v$. This completes the proof of the lemma.
\end{proof}

We complete now the proof of the proposition. Let $w=s\times_\mu v$, so
\begin{equation}\label{lm9}
w\cdot_\mu v=w\cdot_\mu s=w\cdot_\mu w-1=0,\qquad v\times_\mu w=\mu s,\qquad w\times_\mu s=v,\quad\text{ on }\R^2\times I.
\end{equation}
Let $\psi_m=v\cdot_\mu\partial_ms+iw\cdot_\mu\partial_ms$, $m=0,1,2$. Using \eqref{lm9}, $\partial_ms=v\Re(\psi_m)+w\Im(\psi_m)$, thus using \eqref{lm9} again and Lemma \ref{CloCoulomb}
\begin{equation*}
\partial_m w=\partial_ms\times_\mu v+s\times_\mu\partial_mv=-\mu s\Im(\psi_m)-v\widetilde{A}_m\quad\text{ on }\R^2\times I,
\end{equation*}
for $m=0,1,2$. The identities \eqref{nl18.5} follow. The identities \eqref{nl17} and \eqref{nl18}  follow from \eqref{lm9} and the identities \eqref{nl18.5}.

A simple bootstrap argument using the fact that $s\in C^3(I:\widetilde{H}^{\sigma_0})$, as in the proof  of Proposition \ref{vwonetime}, shows that $v,w\in C^3(\R^2\times I:\R^3)$ and that $\psi_0,\psi_1,\psi_2\in C(I:H^4)$. This completes the proof of the proposition.

\end{document}